\NewDocumentCommand{\makeabbrev}{mmm}
 {
  \yoruk_makeabbrev:nnn { #1 } { #2 } { #3 }
 }
\makeabbrev{\textbf}{tbf#1}{a,b,c,d,e,f,g,h,i,j,k,l,m,n,o,p,q,r,s,t,u,v,w,x,y,z,A,B,C,D,E,F,G,H,I,J,K,L,M,N,O,P,Q,R,S,T,U,V,W,X,Y,Z}
\makeabbrev{\textbf}{bf#1}{a,b,c,d,e,f,g,h,i,j,k,l,m,n,o,p,q,r,s,t,u,v,w,x,y,z,A,B,C,D,E,F,G,H,I,J,K,L,M,N,O,P,Q,R,S,T,U,V,W,X,Y,Z}
\makeabbrev{\textsf}{tsf#1}{a,b,c,d,e,f,g,h,i,j,k,l,m,n,o,p,q,r,s,t,u,v,w,x,y,z,A,B,C,D,E,F,G,H,I,J,K,L,M,N,O,P,Q,R,S,T,U,V,W,X,Y,Z}
\makeabbrev{\mathsf}{mss#1}{a,b,c,d,e,f,g,h,i,j,k,l,m,n,o,p,q,r,s,t,u,v,w,x,y,z,A,B,C,D,E,F,G,H,I,J,K,L,M,N,O,P,Q,R,S,T,U,V,W,X,Y,Z}
\makeabbrev{\mathfrak}{mf#1}{a,b,c,d,e,f,g,h,i,j,k,l,m,n,o,p,q,r,s,t,u,v,w,x,y,z,A,B,C,D,E,F,G,H,I,J,K,L,M,N,O,P,Q,R,S,T,U,V,W,X,Y,Z,
sl,gl
}
\makeabbrev{\mathrm}{mrm#1}{a,b,c,d,e,f,g,h,i,j,k,l,m,n,o,p,q,r,s,t,u,v,w,x,y,z,A,B,C,D,E,F,G,H,I,J,K,L,M,N,O,P,Q,R,S,T,U,V,W,X,Y,Z}
\makeabbrev{\mathbf}{mbf#1}{a,b,c,d,e,f,g,h,i,j,k,l,m,n,o,p,q,r,s,t,u,v,w,x,y,z,A,B,C,D,E,F,G,H,I,J,K,L,M,N,O,P,Q,R,S,T,U,V,W,X,Y,Z}
\makeabbrev{\mathcal}{mc#1}{A,B,C,D,E,F,G,H,I,J,K,L,M,N,O,P,Q,R,S,T,U,V,W,X,Y,Z}
\makeabbrev{\mathbb}{mbb#1}{A,B,C,D,E,F,G,H,I,J,K,L,M,N,O,P,Q,R,S,T,U,V,W,X,Y,Z}
\makeabbrev{\mathscr}{ms#1}{A,B,C,D,E,F,G,H,I,J,K,L,M,N,O,P,Q,R,S,T,U,V,W,X,Y,Z}
\makeabbrev{\mathrm}{#1}{
%Algebraic Ops
id,ran,rk,diag,stab,ann,conv,pr,ev,tr,End,Hom,sgn,im,op,can,fin,ext,red,tot,lex,Aut,Inn,unit,
%
%Analytic Ops
rot,usc,lsc,Lip,lip,bSymLip,osc,AC,loc,coz,z,
%
%Measure Theory
supp,Opt,Adm,Cpl,Geo,GeoOpt,GeoAdm,GeoCpl,reg,res,
%
%Topology/Geometry
bd,co,Ric,Exp,dExp,dist,seg,Seg,cut,fcut,Cut,SDiff,Iso,Isom,diam,cl,Homeo,Diff,Der,vol,dvol,inj,relint, Graph, sub,
%
%Probability
var,law,Var,Poi,Gam,pa,so,iso,fs,inv,pqi,mix,erg,form,
TestF,
%Miscellaneous
ob,cod,inp,
}
\makeabbrev{\mathsf}{#1}{CD,BE,MCP,Ent,wMTW,MTW,Ch,RCD,EVI,Rad,dRad,SL,cSL,dSL,ScL,Irr,SC,wFe,VA,MetMeas,UMeas,CSMet,Met,USp,Meas,Mbl,alg,Alg}
\makeabbrev{\mathsc}{#1}{mmaf,cg}
\newcommand{\mathsc}[1]{\text{\textsc{#1}}}
\let\temp\phi
\let\phi\varphi
\let\varphi\temp
\let\temp\epsilon
\let\epsilon\varepsilon
\let\varepsilon\temp
\DeclareMathOperator*{\argmax}{arg\,max}
\DeclareMathOperator*{\argmin}{arg\,min}
\numberwithin{equation}{section}
\theoremstyle{plain}
\newtheorem{theorem}{Theorem}[section]
\newtheorem*{theorem*}{Theorem}
\crefname{theorem}{theorem}{Theorems}
\Crefname{Theorem}{Theorem}{Theorems}
\crefname{proposition}{proposition}{propositions}
\Crefname{Proposition}{Proposition}{Propositions}
\crefname{lemma}{lemma}{lemmas}
\Crefname{Lemma}{Lemma}{Lemmas}
\newtheorem{corollary}[theorem]{Corollary}%[section]
\crefname{corollary}{corollary}{corollaries}
\Crefname{Corollary}{Corollary}{Corollaries}
\theoremstyle{definition}
\newtheorem{definition}[theorem]{Definition}%[section]
\crefname{definition}{definition}{definitions}
\Crefname{Definition}{Definition}{Definitions}
\newtheorem*{defs*}{Definition}%[section]
\theoremstyle{remark}
\newtheorem{remark}[theorem]{Remark}%[section]
\crefname{remark}{remark}{remarks}
\Crefname{Remark}{Remark}{Remarks}
\newtheorem{example}[theorem]{Example}%[section]
\crefname{example}{example}{examples}
\Crefname{Example}{Example}{Examples}
\crefname{conjecture}{conjecture}{conjectures}
\Crefname{Conjecture}{Conjecture}{Conjectures}
\crefname{problem}{problem}{problems}
\Crefname{Problem}{Problem}{Problems}
\Crefname{assumption}{\textbf{A}\hspace{-3pt}}{\textbf{A}\hspace{-3pt}}
\crefname{assumption}{\textbf{H}}{\textbf{H}}
\newtheorem*{ass*}{Assumption}%[section]
\renewcommand{\paragraph}[1]{\medskip\emph{#1}.\quad}
\newcommand{\bes}{\begin{equation*}}
\newcommand{\ees}{\end{equation*}}
\newcommand{\beas}{\begin{eqnarray}}
\newcommand{\eeas}{\end{eqnarray}}
\newcommand{\bea}{\begin{eqnarray}}
\newcommand{\eea}{\end{eqnarray}}
\newcommand{\be}{\begin{equation}}
\newcommand{\ee}{\end{equation}}
\newcommand{\bei}{\begin{itemize}}
\newcommand{\eei}{\end{itemize}}
\newcommand{\bec}{\begin{cases}}
\newcommand{\eec}{\end{cases}}
\newcommand{\ben}{\begin{enumerate}}
\newcommand{\een}{\end{enumerate}}
\newcommand{\De}{\mathrm{d}}
\def\ie{\textit{i.e.}}
\begin{document}

\title{Sparsity for isotropic spherical random fields}

\dedicatory{In Honour of N.Leonenko's 75th Birthday}

\author{Giacomo Greco}
\address{Università degli studi di Roma Tor Vergata}
\curraddr{RoMaDS - Department of Mathematics, 00133RM Rome, Italy.}
\email{greco@mat.uniroma2.it}
\thanks{Both authors are associated to INdAM (Istituto Nazionale di Alta
Matematica ``Francesco Severi'') and the group GNAMPA; they are supported by the PRIN project GRAFIA (CUP:
E53D23005530006) and MUR Departement of Excellence Programme 2023-2027 MatMod@Tov (CUP: E83C23000330006).}

\author{Domenico Marinucci}
\address{Università degli studi di Roma Tor Vergata}
\curraddr{RoMaDS - Department of Mathematics, 00133RM Rome, Italy.}
\email{marinucc@mat.uniroma2.it}
%\thanks{}

\begin{abstract}
We introduce a simple representation for isotropic spherical random fields and we discuss how it allows to discuss different notions of sparsity under isotropy. We also show how a suitable construction of sparse fields can mimic well the angular power spectrum and the polyspectra of some popular non-Gaussian fields, at the same time allowing for computationally efficient simulation algorithms. Using related ideas we also show how it is possible to obtain sparse approximations of spherical random fields which preserve isotropy, thus addressing an issue which has been raised in the Cosmological literature.

 \noindent\textbf{Keywords and Phrases: }
Sparsity, Isotropy, Gaussianity, Spherical Random Fields

\noindent\textbf{MSc Classification: }60G60, 33C55, 43A90

\end{abstract}

\maketitle

\section{Background and objectives}

Over the last two decades, isotropic spherical random fields have attracted
a lot of interest and have been considered for an enormous number of applications in very diverse
areas, including Cosmology, Climate Science, Geophysics, Quantum Mechanics,
Machine Learning and Neural Networks, among others. The overwhelming
majority of this literature has focused on Gaussian circumstances; this is
also due to the mathematical difficulty to introduce principled non-Gaussian
models which do not violate isotropy.

In particular, the main tool for the investigation of finite variance, isotropic spherical
random fields is the spectral representation theorem, which allows expressing $T(x),$ $x\in \mbbS^{2}$ as%
\begin{equation}  \label{eq:harmonic:decomposition}
T(x)=\sum_{\ell =0}^{\infty }\sum_{m=-\ell }^{\ell }a_{\ell m}Y_{\ell m}(x)%
\text{ ,}
\end{equation}
where the equality holds both in the $\mrmL^{2}(\Omega )$ and in the $\mrmL%
^{2}(\Omega \times \mbbS^{2})$ sense; the spherical harmonics $Y_{\ell m}(.)$ are an orthonormal
system of eigenfunctions of the spherical Laplacian operator, and
the coefficients $\left\{ a_{\ell m}\right\} $ are zero mean and
uncorrelated, with variance $\mbbE[|a_{\ell m}|^{2}]=C_{\ell }$, the latter
defining the angular power spectrum of the random field. The
randomness of the whole field is hence entirely encoded in the vector of harmonic
coefficients $(\mathbf{a}_{\ell})_{\ell\in\mbbN}=(a_{\ell m})_{\ell\in\mbbN%
,m=-\ell,\dots,+\ell}$, (see \cite{Yadrenko1983}, \cite{Leonenko1999}, \cite{marinucci2011random}) . 

A question that can appear natural is whether it is possible to achieve an alternative representation that, while preserving isotropy, may turn out to be "more sparse", i.e. to require a smaller number of random coefficients under some circumstances. The question arises in particular if one keeps in mind that many of the most popular modern statistical or machine learning procedures (such as the Lasso, see \cite{Vandegeer2011}) are based on various forms of penalization/sparsity-enforcing
algorithms. Naive applications trying to enforce sparsity on the coefficients $a_{\ell,m}$ in spectral representation \cref{eq:harmonic:decomposition} have been considered in the literature, but they necessarily
lead to anisotropic features, as discussed in \cite{CammarotaAcha2015}, \cite{Feeney}, \cite{Sloan2020} and others.

Indeed, it can be shown that a spherical random field is isotropic if and
only if the triangular array of coefficients $\left\{ a_{\ell m}\right\} $
satisfies the following invariance conditions; we must have that%
\begin{equation*}
\mathbf{a}_{\ell }\overset{Law}{=}D_{\ell }(g)\mathbf{a}_{\ell }\text{ ,}
\end{equation*}%
where we write $\mathbf{a}_{\ell }=(a_{\ell ,-\ell },\,\dots,\,a_{\ell \ell
})^{T}$ for the $(2\ell +1)\times 1$ column vector of random spherical
harmonic coefficients. Here we have introduced the set of ``Wigner'' $%
(2\ell +1)\times (2\ell +1)$ matrix-valued functions $D_{\ell }(g),$ $g\in
SO(3)$, which provide a family of irreducible representations for the special group of rotations $SO(3)$ (see \cite{Varshalovich1988}, Chapter 4 and \cite{marinucci2011random}, Chapter 3.3). In particular, this implies that the spectral
representation cannot be sparse for isotropic fields, i.e., it cannot be
imposed that some of these coefficients are zero-valued, because this would
clearly violate the invariance condition: the matrix functions $D_{\ell }(.)$
yield an irreducible representations of the group $SO(3)$ and hence no
non-trivial subspace of $\mathbb{R}^{2\ell +1}$ can be invariant to their
action; hence if some components of the vector of harmonic coefficients $%
\mathbf{a}_{\ell}$ are null, they must necessarily be non-zero for some
other rotation $g\in SO(3)$.

\medskip

In order to preserve isotropy and enforce sparsity the only existing approach that we are
aware of is the one proposed in \cite{Sloan2020}, where sparsity is enforced by introducing a regularization term on the sample angular power spectrum at different multipoles. As discussed in more detail in \cref{Comparison}, the approach is rigorous and computationally feasible; however, from a certain point of view it leads more to smoothness in a Sobolev sense than to sparsity in the sense we advocate in this paper, that is, a drastically lower number of random coefficients. Moreover, the approach preserves isotropy and Gaussianity, which is definitely a useful feature for many applications; however, it cannot enforce sparsity for given values of the angular power spectrum sequence, which can be a limitation for some applications.

The purpose of this paper is to introduce a novel notion of sparsity for
isotropic random fields, alongside with a general construction method that
generates isotropic (although generally non-Gaussian) spherical random fields. We
shall exploit this construction in different directions:

\begin{itemize}
\item We shall show how our construction is flexible with respect to
higher-order moments/cumulants of the spherical harmonic coefficients (the
so-called polyspectra). In particular, this construction allows us to generate isotropic, non-Gaussian random fields
with explicit prescribed power spectra and polyspectra, generating set of
random coefficients whose cardinality is (much)
smaller than constructions with similar angular resolutions, based instead on the
spectral representation \cref{eq:harmonic:decomposition}. 

\item We will show, in particular, that these models
 can easily mimic the most important features for Cosmological applications, i.e., those of ``local" non-Gaussianity, while also covering a plethora of cases which are not covered by local models. We refer to the classical survey \cite{BARTOLO2004} for a full overview on other forms of nonGaussianity which can be induced on Cosmic Microwave Background radiation by the so-called inflationary models, the dominant paradigm for the dynamics of the primordial Universe.

\item We shall show how, given the realization of a spherical random field, it is possible to approximate the latter to an arbitrary degree of accuracy in terms of this new representation. More precisely, we will introduce a sparse reconstruction algorithm which is invariant to the choice of coordinates and is computationally feasible, as illustrated in our Monte Carlo simulations. 
\end{itemize}

\medskip

The main idea behind our strategy is to consider a general decomposition of isotropic random fields built as weighted superpositions of deterministic waves centered in random directions
on the sphere, i.e., 
\begin{equation}
T(x)=\sum_{\ell \geq 0}\sum_{k=1}^{K_\ell}\eta _{\ell k}\frac{2\ell +1}{4\pi }%
P_{\ell }(\left\langle \xi _{k},x\right\rangle )\text{ , where }\xi _{k}\sim 
\mathrm{Unif}(\mbbS^{2})\,,  \label{eq:superposition:waves}
\end{equation}%
where $\left\{ \eta _{\ell k}\right\} $ is a finite-variance random sequence, whose properties will be discussed below. 

\begin{remark}
It is immediate to see that the random fields defined in \eqref{eq:superposition:waves} are strongly isotropic; indeed for any $g \in SO(3)$ we have that 
\begin{equation*}
    \begin{aligned}
        T^g(x):=&\,T(gx)=\sum_{\ell \geq 0}\sum_{k=1}^{K_\ell}\eta _{\ell k}\frac{2\ell +1}{4\pi }
P_{\ell }(\left\langle \xi _{k},gx\right\rangle )\\
=&\,\sum_{\ell \geq 0}\sum_{k=1}^{K_\ell}\eta _{\ell k}\frac{2\ell +1}{4\pi }%
P_{\ell }(\left\langle g^{-1}\xi _{k},x\right\rangle )\overset{Law}{=}T(x)\,.
    \end{aligned}
\end{equation*}
\end{remark}

\begin{remark} Neglecting the weighting factors $[\eta _{\cdot,\cdot}]$ and considering fields supported on a single frequency/multipole $\ell$, we obtain as special cases the Spherical Poisson Waves recently introduced by \cite{Bourguin2024}, see also \cite{castaldo2025}.
\end{remark}

We shall show (see \Cref{thm:superposition:representation}) that all isotropic random fields admit the representation~\eqref{eq:superposition:waves}; moreover, with suitable assumptions on the array of random variables $\{\eta_{\ell k}\}$ we will be able to construct fields which mimic the spectral properties of widely popular non-Gaussian cosmological models. 

Heuristically, we will consider an isotropic random field \emph{sparse} if the expansion in \eqref{eq:superposition:waves} involves a "small" number of components. More
precisely, we say that

\begin{definition}\emph{(Weak Sparsity)}
An isotropic spherical random field is weakly
sparse if for any $L\geq 0$ its projection on the spherical harmonics in the first $L$-multipoles $ \{Y_{\ell m}\}_{\ell=1,...,L}$ is measurable
with respect to a $\sigma $-algebra generated by $N$ real random variables, with $%
N=O(L^{2\gamma })$ and $\gamma \in \lbrack 0,1)$. 
\end{definition}

Clearly, any field such as~\eqref{eq:superposition:waves} is weakly sparse when $N=\sum_{\ell}K_{\ell}=O(L^{2\gamma })$, where the $\sigma$-algebra is the one generated by the collection $\{\eta _{\ell k},\xi _{k}\}$. Moreover, it is immediate to see that a Gaussian random field with strictly positive
angular power spectrum cannot be sparse, as its generated $\sigma $-algebra
includes $O(L^{2}$) non-zero, independent Gaussian spherical
harmonic coefficients.

 We refer to the notion above as \emph{weak sparsity}, since this definition still allows for fields where (weak) sparsity is achieved by entirely dropping some multipole components (in some sense, this form of sparsity can be considered closer in spirit to the approach by \cite{Sloan2020}, i.e., implementing a penalization at the level of the frequency components). In particular, isotropic Gaussian random fields can be weakly sparse as soon as their angular power spectrum $(C_\ell)_{\ell\geq 0}$ is non-zero only for a subset whose cardinality grows sublinearly with $\ell\geq 0$, \ie, such that  for any $L\geq 0$ we have $\# \{C_\ell\neq 0\,| \,0\leq \ell\leq L\}\leq L^\gamma$ with $\gamma\in[0,1)$. In order to rule out this possibility, we introduce here the following stronger notion of sparsity.

\begin{definition}\emph{(Strong Sparsity)}
An isotropic spherical random field is
strongly sparse if for any $L\geq 0$ its projection on the first $L$-multipoles spherical harmonics is
measurable with respect to a $\sigma $-algebra generated by $N$ real random
variables, with $N=O((\sum_{\ell\in \mcS(L)}2\ell+1)^\gamma$), $\gamma \in \lbrack 0,1)$ and where  $\mcS(L)$ is the support of the angular power
spectrum in $[0,L]$:
\[\mcS(L) \coloneqq \{0\leq\ell\leq L\,\colon\,C_\ell\neq 0 \}\,.\]
\end{definition}
For reader's convenience, notice that for any field with strictly positive angular power spectrum we have $\sum_{\ell\in \mcS}^L(2\ell+1)=(L+1)^2$.

Clearly strong sparsity implies weak sparsity, whereas the opposite does not hold. In particular, it is possible to obtain weakly sparse Gaussian fields by simply assuming that the angular power spectrum $\{C_{\ell}\}$ is exactly equal to zero for some multipoles $\ell$; however these fields would not be strongly sparse, indeed no Gaussian random field can belong to the strongly sparse class under isotropy (see also \Cref{cor:gauss:no:sparse}).

\subsection{Sparse fields with prescribed bispectrum}

In order to better understand our sparsity proposal, let us recall some properties
that the isotropy assumption imposes on the joint moments (and cumulants) of
the random spherical harmonic coefficients. As regards the first two
moments, it is very well known that the harmonic coefficients must have
expected value zero and be uncorrelated for $(\ell_1 ,m_1)\neq (\ell_2,m_2):$%
\begin{equation*}
\mbbE[a_{\ell_1m_1}\overline{a}_{\ell_2m_2}]=C_{\ell_1 }\delta _{\ell_1
}^{\ell_2}\delta _{m_1}^{m_2}\,,
\end{equation*}%
with $\delta _{a}^{b}$ denoting the Kronecker delta. For the third order
moment (the so-called bispectrum), we have 
\begin{equation*}
\mbbE[a_{\ell _{1}m_{1}}a_{\ell _{2}m_{2}}a_{\ell _{3}m_{3}}]=b_{\ell
_{1}\ell _{2}\ell _{3}}\mathcal{G}_{\ell _{1}\ell _{2}\ell
_{3}}^{m_{1}m_{2}m_{3}}\text{ ,}
\end{equation*}%
where $b_{\ell
_{1}\ell _{2}\ell _{3}}$ is the so-called \emph{reduced bispectrum} and $\mathcal{G}_{\ell _{1}\ell _{2}\ell _{3}}^{m_{1}m_{2}m_{3}}$ represents the
Gaunt integral:%
\begin{equation*}
\mathcal{G}_{\ell _{1}\ell _{2}\ell _{3}}^{m_{1}m_{2}m_{3}}=\int_{\mbbS%
^{2}}Y_{\ell _{1}m_{1}}(z)Y_{\ell _{2}m_{2}}(z)Y_{\ell _{3}m_{3}}(z) \mathrm{%
d} \vol\,.
\end{equation*}
The value of this integral is explicitly known as a function of the index
parameters $\ell _{1},\ell _{2},\ell _{3}$ and $m_{1},m_{2},m_{3}$ (see \cite{Varshalovich1988} Section 5.9 or \cite{marinucci2011random} Sections 3.5.2 and 3.5.3). This
shows that both second and third moments can be decomposed in a part which
solely depends only on the multiple levels $\ell_i$, and another component, depending on both the indexes $\ell_i,m_i$, $i=1,2,3$,
whose role is only to enforce isotropy and which does not carry any physical information on the law of a given field (see again \cite{marinucci2011random}, Chapters 6.5 and 9.2 for more discussion and details). Similar representations hold for joint moments and
cumulants of arbitrary order, which can always be written as a part
depending only on the multipoles $\ell _{1},...,\ell _{p}$ which encodes the
statistical property of the field and a deterministic component which
involves multiple integrals of spherical harmonics and enforces isotropy. 

We shall now show how the representation \eqref{eq:superposition:waves} can be exploited to generate spherical random fields to respect isotropy and hence to produce polyspectra with the required invariance properties. We shall consider the case where the coefficients $\{\eta_{\ell k}\}$ are fully independent from the sequence $\{\xi_k\}$; under these circumstances, starting from the case $K_\ell\equiv 1$, we obtain 
\begin{equation*}
\begin{aligned} &\mbbE[a_{\ell _{1}m_{1}}a_{\ell _{2}m_{2}}]=(4\pi)^{-1}\delta
_{m_1}^{m_2}\,\mbbE[\eta_{\ell_1}\eta_{\ell_2}]\\ &\mbbE[a_{\ell
_{1}m_{1}}a_{\ell _{2}m_{2}}a_{\ell _{3}m_{3}}]=(4\pi)^{-1}\,\mathcal{G}_{\ell _{1}\ell _{2}\ell _{3}}^{m_{1}m_{2}m_{3}}\mbbE[\eta _{\ell _{1}}\eta
_{\ell _{2}}\eta _{\ell _{3}}]
\,. \end{aligned}
\end{equation*}
This shows that moment constraints for the isotropic field are met once suitable constraints are imposed on the random array of weighting factors $
\{\eta_{\ell k}\}$. We refer the reader to \Cref{sec:fNL} where we show how our sparse proposals match the features of popular isotropic non-Gaussian models with spectrum and bispectrum constraints; we will focus in particular on the "local" non-Gaussian model based on the nonlinearity parameter $f_{\mathrm{NL}}$, see again \cite{BARTOLO2004}, \cite{PlanckNG2020} and the references therein for a much more detailed discussion. It is also easy to check (see again \Cref{sec:fNL}) that this can be obtained by means of $O(L)$ random coefficients, rather than $O(L^2)$, so sparsity is indeed achieved.

\begin{remark}[Power spectrum]\label{remark:varianza:eta}
When the coefficients $\{\eta_{\ell k}\}$ are fully independent from the sequence $\{\xi_k\}$, it is immediate to investigate the relationship between the variance of the random weights $\{\eta_{\ell k}\}$ and the angular power spectrum of the corresponding field. Indeed, writing as usual $T_{\ell}=\sum_{m}a_{\ell m} Y_{\ell m}$ for the projection of $T$ on the space spanned by spherical harmonics of order $\ell$, we have 
    \begin{equation*}
    \begin{aligned}
      C_\ell=&\,\frac{4\pi}{2\ell+1} \mbbE\biggl[ \biggl(\sum_{m}a_{\ell m} Y_{\ell m}(x)\biggr)^2\biggr]=\frac{4\pi}{2\ell+1}\mbbE[T_{\ell}^2(x)]\\
     =&\,\frac{4\pi}{2\ell+1}\mbbE\biggl[\biggl(\sum_{k=1}^K\eta _{\ell k}\frac{2\ell +1}{4\pi }
P_{\ell }(\left\langle x,\xi _{k}\right\rangle )\biggr)^2\biggr]\\
=&\,\frac{4\pi}{2\ell+1}\sum_{k,h=1}^{K_\ell}\mbbE[\eta_{\ell k}\eta_{\ell h}] \sum_{m,\tilde m}Y_{\ell m}(x)Y_{\ell \tilde m}(x) \mbbE[Y_{\ell m}(\xi_k)Y_{\ell \tilde m}(\xi_h)]\\
=&\,\frac{4\pi}{2\ell+1}\sum_{k,h=1}^{K_\ell}\mbbE[\eta_{\ell k}\eta_{\ell h}] \sum_{m,\tilde m}Y_{\ell m}(x)Y_{\ell \tilde m}(x)\,\frac{\delta_k^h\,\delta_{m}^{\tilde m}}{4\pi}\\
=&(2\ell+1)^{-1}\sum_{k=1}^{K_\ell}\mbbE[\eta_{\ell k}^2]\,\sum_{m=-\ell}^\ell |Y_{\ell m}(x)|^2=(4\pi)^{-1}\sum_{k=1}^K\mbbE[\eta_{\ell k}^2]
\end{aligned}
    \end{equation*}
    In particular, in case $K_\ell=1$ we have exactly $\mbbE[\eta_\ell^2]=4\pi\,C_\ell$.
\end{remark}

\subsection{Sparse reconstruction algorithm}

\Cref{thm:superposition:representation} shows that any monochromatic isotropic random field (of frequency $\ell$) can almost surely be expressed as the superposition of $2\ell+1$ random waves, centered upon random points. It is therefore natural that, in order to obtain any sparse approximation, we need to focus only a smaller number of \emph{most representatives} random waves and evaluate their superposition. By these means we plan to obtain a sparse approximation of the original random field, independent from the choice of coordinates, with any given degree of accuracy (Gaussianity, however, will be lost). The iterative algorithmic procedure we propose is explained below; it is immediate to verify that the procedure does not depend upon the choice of coordinates, as required to preserve isotropy.

\medskip

   \begin{algorithm}[H]
\SetAlgoLined
\KwIn{Monochromatic field $T_{\ell}$; 
Sparsity parameter $K\ll 2\ell+1$}
Initialize residual field $T_{\ell}(\cdot,0)=T_{\ell}$\\
\For{$k=1,\dots,\,K$}{

Find best direction $\xi_k\in \argmax_{x\in\mbbS^2} |T_{\ell}(x,k-1)|^2$\\
 Update residual field $T_{\ell}(\cdot,k)\coloneqq T_{\ell}(\cdot,k-1)-T_{\ell}(\xi_k,k-1)P_\ell(\langle \xi_k,\cdot\rangle)$
    }
\KwOut{Sparse superposition field $\sum_{k=1}^KT_{\ell}(\xi_k,k-1)P_\ell(\langle \xi_k,\cdot\rangle)$   }
\caption{Sparse reconstruction algorithm}\label{algo:sparse:approx}
\end{algorithm}

\medskip

We postpone to \Cref{sec:proof:algo} a detailed comment on this algorithm, in particular the discussion on the resulting spherical harmonic coefficients. Notice here that the algorithm we have proposed applies also to deterministic random fields. 

Some possible paths for the generalization of the previous iterative algorithm for general, non-monochromatic random fields $T=\sum_{\ell\geq 0}T_{\ell}$ is discussed in  \Cref{sec:proof:algo:colorati}. 
\\

\bigskip
\noindent\textbf{Plan of the paper.} The paper is organized as follows. In \Cref{sec:bispettri} we show how the class of random fields introduced in this paper can mimic the angular power spectrum and higher order spectra for a wide class of isotropic fields, thus providing a useful tool also for efficient simulations; in \Cref{sec:sparse:reconstruction} we prove our main result on sparse representations for isotropic fields and we include a short comparison with the existing literature on sparse approximations. Numerical evidence is given in \Cref{sec:numerics}, while some background material is collected in a short Appendix.

\section{Sparse isotropic random fields with prescribed bispectrum}\label{sec:bispettri}

\label{sec:fNL} In this section we show how our sparse model allows to generate easily some non-Gaussian fields with prescribed bispectrum. More precisely, we show how it is 
possible to generate a sparse random field with the same third order moments
as the ones considered in the very popular single field model of inflation in Cosmology; this model has been addressed and empirically investigated in literally hundreds of papers over the last two decades, see again \cite{BARTOLO2004}, \cite{PlanckNG2020} and the references therein for more discussion.

We are interested in nonlinearly perturbed random fields of the form 
\begin{equation}  \label{eq:fNL:field}
T(x)=T_G (x)+f_{\mathrm{NL}}(T_G^2(x)-\mbbE[T_G (x)]^2)\,,
\end{equation}
where $T_G$ is a mean-zero Gaussian isotropic field with the prescribed
spectrum, whereas $f_{\mathrm{NL}}$ is a (small) nonlinearity parameter. This class of perturbed fields
arises in standard models of inflationary Cosmology (see \cite{BARTOLO2004}, \cite{marinucci2011random}, \cite{PlanckNG2020}).
 The non-Gaussianity parameter $f_{\mathrm{NL}}$
is usually estimated from observed maps via the Komatsu–Spergel%
–Wandelt (KSW) estimator 
(see for example
\cite{KSW2023}, \cite{durastanti2025}).

In this section we are going to show that our sparse model can easily match
the spectrum and bispectrum generated by the class of fields %
\eqref{eq:fNL:field}. To this aim, let $K=1$ in our sparse model %
\eqref{eq:superposition:waves} and consider $\xi\sim \mathrm{Unif}(\mbbS%
^{2}) $ uniformly distributed on the sphere. In order to generate the
weights $\{\eta_{\ell}\}$ in \eqref{eq:superposition:waves}, consider a
sequence of independent, identically distributed real-valued standard
Gaussian random variables $\{z_{\ell}\}$, independent from $\xi$, and set 
\begin{equation*}
\eta_\ell\coloneqq \sqrt{4\pi\,C_\ell}\,z_\ell+3\,f_{%
\mathrm{NL}}\sum_{\ell_1, \ell_2}\sqrt{C_{\ell_1} C_{\ell_2}%
}\,\colon z_{\ell_1}\,z_{\ell_2}\colon
\end{equation*}
where $\colon z_{\ell_1}\,z_{\ell_2}\colon$ denotes the Wick product, see \Cref{App:Wick}.
 Then, one can
easily see that the harmonic coefficients in~%
\eqref{eq:harmonic:decomposition} associated to the sparse field %
\eqref{eq:superposition:waves} (with $N=1$) equal 
\begin{equation*}
\begin{aligned} a_{\ell m}=\int_{\mbbS^{2}}T(x)\overline{Y}_{\ell
m}(x)\mathrm{d} \vol=\sum_{\ell^\prime\geq 0} \eta _{\ell^\prime
}\frac{2\ell^\prime +1}{4\pi }\int_{\mbbS^{2}}P_{\ell^\prime }(\left\langle
\xi ,x\right\rangle )\overline{Y}_{\ell m}(x)\mathrm{d}\vol\\
=\sum_{\ell^\prime\geq 0} \sum_{m^\prime}\eta _{\ell^\prime }
\overline{Y}_{\ell^\prime m^\prime}(\xi)\int_{\mbbS^{2}}Y_{\ell^\prime
m^\prime} (x)\overline{Y}_{\ell
m}(x)\mathrm{d}\vol=\eta_\ell\,\overline{Y}_{\ell m}(\xi)\,. \end{aligned}
\end{equation*}
and hence that the spectrum of this field corresponds to 
\begin{equation*}
\begin{aligned} \mbbE[a_{\ell m}\overline{a}_{\ell ^{\prime }m^{\prime
}}]=\mbbE[\overline{Y}_{\ell m}(\xi)Y_{\ell^\prime
m^\prime}(\xi)]\,\mbbE[\eta_\ell\,\eta_{\ell^\prime}]=(4\pi)^{-1}\,\mbbE[%
\eta_\ell\,\eta_{\ell^\prime}]\,\int_{\mbbS^{2}}\overline{Y}_{\ell
m}Y_{\ell^\prime m^\prime}\,\mathrm{d} \vol\\
=(4\pi)^{-1}\,\delta_{\ell}^{\ell^\prime}\delta_m^{m^\prime}\,\mbbE\biggl[%
\biggl(
\sqrt{4\pi\,C_\ell}\,z_\ell+3\,f_{\mathrm{NL}}\sum_{\ell_1,%
\ell_2}\sqrt{C_{\ell_1}C_{\ell_2}}\,\colon z_{\ell_1}\,z_{\ell_2}\colon%
\biggr)^2\biggr]\\
=\delta_{\ell}^{\ell^\prime}\delta_m^{m^\prime}\,\biggl(C_{\ell
}+9\frac{f_{\mathrm{NL}}^{2}}{4\pi}\sum_{\ell _{1},\ell
_{2}  }(1+\delta_{\ell_1}^{\ell_2})\,C_{\ell _{1}}C_{\ell
_{2}}\biggr)=\delta_{\ell}^{\ell^\prime}\delta_m^{m^\prime}\,\biggl(C_{\ell
}+O(f_{\mathrm{NL}}^{2})\biggr)\,. \end{aligned}
\end{equation*}
Up to lower order terms, this expression is consistent with the spectrum of the perturbed field %
\eqref{eq:fNL:field} whose spectrum indeed equates $C_{\ell }+O(f_{\mathrm{NL}}^{2})$
(see for instance \cite{marinucci2011random}, Chapter 6.7 and \cite{durastanti2025}, Proposition 2 e 3, together with the references in both these works).

Likewise, we may compute the bispectrum as 
\begin{equation*}
\begin{aligned} \mbbE[a_{\ell _{1}m_{1}}a_{\ell _{2}m_{2}}a_{\ell
_{3}m_{3}}]= \mbbE[\eta _{\ell _{1}}\eta _{\ell _{2}}\eta _{\ell _{3}}]
\,(4\pi)^{-1}\,\int_{\mbbS^{2}}\overline{Y}_{\ell
_{1}m_{1}}\overline{Y}_{\ell _{2}m_{2}}\overline{Y}_{\ell
_{3}m_{3}}\mathrm{d}\vol\\ =\mbbE[\eta _{\ell _{1}}\eta _{\ell _{2}}\eta
_{\ell _{3}}] \,\sqrt{\frac{(2\ell_1+1)(2\ell_2+1)(2\ell_3+1)}{(4\pi)^3}}
\begin{pmatrix} \ell_1 & \ell_2 & \ell_3 \\ 0 & 0 & 0 \end{pmatrix}
\begin{pmatrix} \ell_1 & \ell_2 & \ell_3 \\ m_1 & m_2 & m_3 \end{pmatrix}\\
=\mbbE[\eta _{\ell _{1}}\eta _{\ell _{2}}\eta _{\ell _{3}}]
\,(4\pi)^{-1}\,\mathcal{G}_{\ell _{1}\ell _{2}\ell _{3}}^{m_{1}m_{2}m_{3}}
\end{aligned}
\end{equation*}
where we have expressed the Gaunt integral $\mathcal{G}_{\ell _{1}\ell
_{2}\ell _{3}}^{m_{1}m_{2}m_{3}} $ in terms of the Wigner 3j-symbols \cite[Equation 5 in Section 5.9]{Varshalovich1988}.  
Therefore the reduced bispectrum equals 
\begin{equation*}
\begin{aligned}
b_{\ell_1\ell_2\ell_3}=&\,(4\pi)^{-1} \mbbE[\eta _{\ell _{1}}\eta _{\ell
_{2}}\eta _{\ell _{3}}]
=3\,f_{\mathrm{NL}}\,\mbbE\biggl[\biggl(\sqrt{C_{\ell_1}C_{\ell_2}}z_{\ell_1}z_{\ell_2}+\sqrt{C_{\ell_2}C_{\ell_3}}z_{\ell_2}z_{\ell_3}\\
+&\sqrt{C_{\ell_3}C_{\ell_1}}z_{\ell_3}z_{\ell_1}\biggr)\sum_{\tilde\ell_1,\tilde\ell_2}\sqrt{C_{\tilde\ell_1}C_{\tilde\ell_2}}\colon z_{\tilde\ell_1}z_{\tilde\ell_2}\colon\biggr]
+\frac{27\,f_{\mathrm{NL}}^3}{4\pi}\,\mbbE\biggl[\biggl(\sum_{\tilde\ell_1,\tilde\ell_2}\sqrt{C_{\tilde\ell_1}C_{\tilde\ell_2}}\colon z_{\tilde\ell_1}z_{\tilde\ell_2}\colon\biggr)^3\biggr]\\
=&\,6\,f_{\mathrm{NL}}\biggl(C_{\ell_1}C_{\ell_2}+C_{\ell_2}C_{\ell_3}+C_{\ell_3}C_{\ell_1}\biggr)+\frac{54\,f_{\mathrm{NL}}^3}{\pi}\biggl(\sum_{\ell}C_{\ell}\biggr)^3\,,
\end{aligned}
\end{equation*}
which matches the $f_{\mathrm{NL}}$-first order bispectrum for the Sachs-Wolfe model with Bardeen's potential associated to \eqref{eq:fNL:field}  \cite[Section 6.7.1, Equation 6.55]{marinucci2011random}, commonly considered for studying the Cosmic Microwave Background radiation. 

\begin{remark}
Our model can be easily generalized in order to take into account different non-Gaussianities, beyond the bispectrum prescription. Indeed we may in general consider a strongly sparse field \eqref{eq:superposition:waves}, with $K_\ell=1$ generated by
  $\xi\sim \mathrm{Unif}(\mbbS%
^{2}) $ uniformly distributed on the sphere, by considering the random weights
\begin{equation*}
\eta _{\ell }:=\sqrt{4\pi C_{\ell }}z_{\ell }+\sum_{\ell _{1},\ell _{2}}c^\ell_{\ell _{1},\ell _{2}}\,\colon z_{\ell _{1}}z_{\ell _{2}}\colon+\sum_{\ell
_{1}, \ell _{2},\ell _{3} }c^\ell_{\ell _{1},\ell
_{2},\ell _{3}}\,\colon z_{\ell _{1}}z_{\ell _{2}}z_{\ell _{3}}\colon+...
\end{equation*}%
where $\colon\dots\colon$ denotes the Wick product, $\{z_{\ell}\}$ is 
an independent sequence of i.i.d. real-valued standard
Gaussian random variables, and $c^\ell_{\ell _{1},\ell _{2}\dots}$ are parameters that should be chosen in order to fit the prescribed non-Gaussian statistics. For the general model, the reduced bispectrum would be%
\begin{equation*}
\begin{aligned}
b_{\ell_1\ell_2\ell_3}=&(4\pi)^{-1} \mbbE[\eta _{\ell _{1}}\eta _{\ell
_{2}}\eta _{\ell _{3}}]
=2\sqrt{C_{\ell _{1}}C_{\ell _{2}}}\,c^{\ell _{3}}_{\ell _{1},\ell
_{2}}+2\sqrt{C_{\ell
_{2}}C_{\ell _{3}}}\,c^{\ell _{1}}_{\ell _{2},\ell _{3}}\\
&+2\sqrt{C_{\ell
_{3}}C_{\ell _{1}}}\,c^{\ell _{2}}_{\ell _{1},\ell _{3}} 
+\frac{2}{\pi}\sum_{\tilde\ell _{1},\tilde\ell _{2}, \tilde\ell _{3}}c^{\tilde\ell _{1}}_{\tilde\ell
_{2},\tilde\ell _{3}} c^{\tilde\ell _{2}}_{\tilde\ell
_{1},\tilde\ell _{3}} c^{\tilde\ell _{3}}_{\tilde\ell
_{1},\tilde\ell _{2}}+\dots
\end{aligned}
\end{equation*}%
where the lower-order terms depends on the Wick products of higher order. 
For this bispectrum to be maximized by equilateral configurations where $%
\ell _{1}\simeq \ell _{2}\simeq \ell _{3},$ it is enough to assume that $%
C_{\ell _{3}}$ decays fast and $c^\ell_{\ell _{1},\ell _{2}}$ resembles a delta
function. For instance, assuming that%
\begin{equation*}
c^\ell_{\ell _{1},\ell _{2}}=\frac{\sqrt{C_{\ell }}}{(1+|\ell _{1}-\ell
_{2}|)^{\gamma }}\text{ , }\gamma \gg 0\text{ ,}
\end{equation*}%
leads to
\begin{eqnarray*}
b_{\ell_1\ell_2\ell_3}&\approx&\frac{2\sqrt{C_{\ell_1}C_{\ell _{2} }C_{\ell_3}}}{(1+|\ell _{2}-\ell
_{3}|)^{\gamma }}+\frac{2\sqrt{C_{\ell_1}C_{\ell _2}C_{\ell_3}}}{(1+|\ell _{1}-\ell _{3}|)^{\gamma }} +\frac{2\sqrt{C_{\ell_1}C_{\ell _{2} }C_{\ell_3}}}{(1+|\ell
_{1}-\ell _{2}|)^{\gamma }}\\
&&+\frac{2}{\pi}\sum_{\tilde\ell _{1},\tilde\ell _{2}, \tilde\ell _{3}}%
\frac{\sqrt{C_{\tilde\ell _{3}}}}{(1+|\tilde\ell _{1}-\tilde\ell _{2}|)^{\gamma }}\frac{\sqrt{%
C_{\tilde\ell _{1}}}}{(1+|\tilde\ell _{2}-\tilde\ell _{3}|)^{\gamma }}\frac{\sqrt{C_{\tilde\ell _{2}}%
}}{(1+|\tilde\ell _{1}-\tilde\ell _{3}|)^{\gamma }}\, .
\end{eqnarray*}
\end{remark}
We refer to \cite{NG2025} for much more discussion on alternative forms of nonGaussianity arising from the inflationary scenarios (for instance, those generated by so-called \emph{multiple fields inflation}) and the corresponding bispectrum shapes.

\section{A general decomposition for isotropic random fields}

\label{sec:sparse:reconstruction}

As mentioned above, one of our aims is to show that each isotropic random field admits a general decomposition of the form \eqref{eq:superposition:waves}. 
More precisely, we shall show that
\begin{theorem}\label{thm:superposition:representation}
For any isotropic spherical random field  $T=\sum_{\ell
=0}^{\infty}a_{\ell m}Y_{\ell m}$ there exist a sequence $\{K_\ell\}_{\ell\geq 0}$ with $K_\ell\leq 2\ell+1$ and a sequence of real random weights $\{\eta _{\ell k}\}$ such that almost surely it holds
\begin{equation*}
T(x)=\sum_{\ell \geq 0}\sum_{k=1}^{K_\ell}\eta _{\ell k}\frac{2\ell +1}{4\pi }
P_{\ell }(\left\langle x,\xi _{k}\right\rangle )\text{ , where }\xi _{k}\sim 
\mathrm{Unif}(\mbbS^{2})\text{ , }k=1,2,...
\end{equation*}
\end{theorem}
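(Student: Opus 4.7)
The strategy is to reduce the theorem to the monochromatic case and then sum over $\ell$, since the claimed representation is additive in $\ell$ and the spectral series $T = \sum_\ell T_\ell$ with $T_\ell = \sum_m a_{\ell m}Y_{\ell m}$ converges in $\mathrm{L}^2(\Omega \times \mathbb{S}^2)$. Thus it suffices to produce, for each $\ell$ separately, a choice of $K_\ell \le 2\ell+1$ uniform random directions on $\mathbb{S}^2$ and real random weights such that
\[
T_\ell(x) = \sum_{k=1}^{K_\ell} \eta_{\ell k}\,\frac{2\ell+1}{4\pi}\, P_\ell(\langle \xi_k, x\rangle) \qquad \text{a.s.}
\]

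Working in the basis of real spherical harmonics $\{Y^{\mathbb{R}}_{\ell m}\}_{m=-\ell}^{\ell}$, the addition formula reads $\tfrac{2\ell+1}{4\pi} P_\ell(\langle \xi, x\rangle) = \sum_{m=-\ell}^{\ell} Y^{\mathbb{R}}_{\ell m}(\xi)\, Y^{\mathbb{R}}_{\ell m}(x)$, so the desired identity becomes, after projecting both sides onto each $Y^{\mathbb{R}}_{\ell m}$, the $(2\ell+1) \times K_\ell$ real linear system
\[
a^{\mathbb{R}}_{\ell m} = \sum_{k=1}^{K_\ell} \eta_{\ell k}\, Y^{\mathbb{R}}_{\ell m}(\xi_k), \qquad m=-\ell,\dots,\ell,
\]
where $a^{\mathbb{R}}_{\ell m}$ are the (real) coefficients of $T_\ell$ in this basis. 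I would take $K_\ell = 2\ell+1$, sample $\xi_1,\dots,\xi_{2\ell+1}$ i.i.d.\ uniform on $\mathbb{S}^2$ and independent of $T$, and define $(\eta_{\ell k})_k$ as the solution of this square linear system.

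The heart of the argument---and the main obstacle---is showing that the evaluation matrix $M_\ell = (Y^{\mathbb{R}}_{\ell m}(\xi_k))_{m,k}$ is almost surely invertible. I would proceed in two steps. First, since $\{Y^{\mathbb{R}}_{\ell m}\}_{m}$ are linearly independent continuous functions on $\mathbb{S}^2$, a standard inductive argument (were $\det M_\ell$ identically zero as a function of the last point, expanding along the last column would exhibit a nontrivial linear dependence among the $Y^{\mathbb{R}}_{\ell m}$) produces at least one configuration of $2\ell+1$ points at which $\det M_\ell \neq 0$; equivalently, this follows from irreducibility of the Wigner representation $D_\ell$, which forces the $SO(3)$-orbit of any single nonzero vector $(Y^{\mathbb{R}}_{\ell m}(\xi))_m$ to linearly span $\mathbb{R}^{2\ell+1}$. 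Second, $(\xi_1,\dots,\xi_{2\ell+1}) \mapsto \det M_\ell$ is a nonzero real-analytic function on $(\mathbb{S}^2)^{2\ell+1}$, so its zero set is a proper analytic subvariety of Lebesgue measure zero; i.i.d.\ uniform sampling therefore avoids it almost surely, and taking a countable union over $\ell$ gives simultaneous invertibility of all $M_\ell$.

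On this full-measure event I set $\boldsymbol{\eta}_\ell = M_\ell^{-1} \mathbf{a}^{\mathbb{R}}_\ell$, obtaining real random weights such that the monochromatic representation holds a.s.\ in $\mathrm{L}^2(\mathbb{S}^2)$. Assembling the components and summing over $\ell$---using the $\mathrm{L}^2$-convergence of the spectral decomposition together with the measurability of each $M_\ell^{-1}$ in the sampled directions---yields the claimed identity. Beyond the invertibility step, the remaining work is essentially linear-algebraic bookkeeping plus routine verifications of joint measurability and summation.
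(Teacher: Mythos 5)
Your proposal is correct and follows essentially the same route as the paper: both establish almost-sure invertibility of the evaluation matrix $\bigl(Y_{\ell m}(\xi_k)\bigr)$ by combining irreducibility of the representation on $\mathcal{H}_\ell$ (to exhibit one non-degenerate configuration) with the fact that the zero set of the resulting non-trivial real-analytic determinant has measure zero, and then solve the square linear system for the weights and sum over $\ell$. The only cosmetic differences are that you work in the real spherical harmonic basis from the start (making reality of the $\eta_{\ell k}$ automatic, where the paper deduces it afterwards) and that you invoke the analytic zero-set argument directly on $(\mathbb{S}^2)^{2\ell+1}$ rather than via the paper's radial extension to an open subset of $(\mathbb{R}^3)^d$.
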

 This result provides a representation of any isotropic random field from which it is easier to investigate its (strong or weak) sparsity properties.
Moreover,  it suggests how to achieve sparse approximations for isotropic spherical random fields: in a first step it is possible to approximate with a finite multipole expansion and the desired degree of accuracy any field, and in the second to use an expansion such as \eqref{eq:superposition:waves} for random directions that grow sublinearly (i.e., $K_{\ell}=o(\ell)$).

 In this section we prove \Cref{thm:superposition:representation}; we also argue that no Gaussian isotropic random field can be strongly sparse. Before we do so, however, we believe it can be useful to recall a few previous attempts to achieve sparse representations for isotropic random fields and to compare them with our current proposal.

\subsection{A comparison with the existing literature}\label{Comparison}
Quite a few papers over the last 15 years have attempted different forms of sparse reconstruction techniques for isotropic random fields on $\mbbS^2$. The reasons for this interest is easily understood, again referring to Cosmological applications: Cosmic Microwave Background maps are in fact observed with several forms of super-imposed noise, which can have either instrumental origin or can be generated by astrophysical contaminants (foregrounds). Overall, we are in the framework of image reconstruction techniques, where of course sparsity enforcing tools have long proved to be efficient and practical; in this case, however, we want to ensure also that the reconstruction will not produce spurious statistical features (like anisotropies) in the random map that we wish to analyze. Probably the single most popular sparsity enforcing techniques in the mathematical statistics literature is the $Lasso$, which imposes a $\ell_1$ convex regularization term into a standard quadratic loss procedure; in the case of spherical random fields, this suggests to search for solutions of the following convex minimization problem:

\begin{equation}\label{Lasso}
\left [ T^{+}:=\sum_{\ell,m}a^{+}_{\ell,m}Y_{\ell,m}\right ]:= \argmin_{T^{b}:=\sum_{\ell,m}b_{\ell,m}Y_{\ell,m}} \left [\|T(\cdot)-T^{b}(\cdot)\|_{\mrmL^2} + \lambda \sum_{\ell,m}|b_{\ell,m}|\right ],
\end{equation}
where $\lambda$ is a penalization parameter. It is well-known from the standard literature (see e.g. \cite{Vandegeer2011}) that these optimization procedures provide a convex approximation for the solution with a $\ell_0$ loss; the latter would lead to sparsity in a strict sense (in some sense analogous to our terminology in this paper), but by itself is $NP-$hard from the computational point of view and hence numerically unfeasible. The convex regularization procedures leads to solutions which are arbitrarily close with very high probability, see some classical textbooks such as \cite{Vandegeer2011}; in the context of CMB data analysis, regularization procedures based on the Lasso are advocated for instance in \cite{Starck2013}. It was however argued in \cite{Feeney} and \cite{CammarotaAcha2015} that these forms of Lasso procedures will lead to maps which do not satisfy Gaussianity and isotropy assumptions; heuristically, it is indeed immediate to notice that the minimization procedure introduced in \cref{Lasso} is not invariant with respect to the choice of coordinates (the $||\cdot||_{\mrmL^2}$, "fidelity" term is rotationally invariant, but the $|\cdot|_{\ell_1}$ regularization term clearly isn't - see \cite{Sloan2020} for a simple and illuminating counterexample).

To overcome these issues sparsity enforcing techniques that can preserve isotropy and Gaussianity have been recently proposed in \cite{Sloan2020}, whose authors suggest the regularized estimates

\begin{equation}\label{Sloan}
\left [ T^{+}:=\sum_{\ell,m}a^{+}_{\ell,m}Y_{\ell,m}\right ]:= \argmin_{T^{b}:=\sum_{\ell,m}b_{\ell,m}Y_{\ell,m}} \left [\|T(\cdot)-T^{b}(\cdot)\|_{\mrmL^2} + \lambda \sum_{\ell}\beta_{\ell}\sqrt{\sum_{m}|b_{\ell,m}|^2}\right ],
\end{equation}
for constants $\beta_{\ell},\lambda$ to be determined. This procedure is shown to preserve Gaussianity and isotropy and to perform well on simulations; in this sense, it is theoretically well-grounded and empirically valid. However, by imposing a penalization on the reconstructed angular power spectrum $(2\ell+1)\widehat{C}_{\ell}:=\sum_{m}|b_{\ell,m}|^2$ it is actually performing a form of Sobolev regularization, so ensuring smoothness more than sparsity in the sense defined in this paper. 

Summing up, the procedure we advocate in this work has different goals and takes a new perspective. It does not preserve Gaussianity, nor it could because we argued that sparsity and Gaussianity are incompatible; on the other hand, it does achieve a form of sparsity/data compression, in the sense that it approximates the input field with a much lower number of coefficients, at the same time preserving isotropy (and invariance to the choice of coordinates).

\subsection{The proof of the general decomposition}\label{sec:proof:thm}

\begin{proof}[Proof of \Cref{thm:superposition:representation}]
Firstly, take $T=\sum_{\ell
=0}^{\infty}\sum_{m=-\ell}^\ell a_{\ell m}Y_{\ell m}$ and take i.i.d. uniform random variables on the sphere $(\xi_k)_{k\in\mbbN} \overset{\text{i.i.d.}}{\sim}\mathrm{Unif}(\mbbS^2)$. 
Now fix $\ell\geq 0$, any non-negative number $K_\ell$ and for each $k\in\{1,\dots,K_\ell\}$ consider the column vector%
\begin{equation*}
\mathbf{Y}_{\ell }(\xi_k):=(Y_{\ell ,-\ell }(\xi _{k}),...,Y_{\ell ,\ell }(\xi_{k}))^{T}\,,
\end{equation*}%
and the corresponding random matrix generated by these column vectors 
\begin{equation*}
\mathbb{Y}_{\ell ,K_\ell}:=\left\{ \mathbf{Y}_{\ell }(\xi_1),...\mathbf{Y}_{\ell
}(\xi_{K_\ell})\right\} \,.
\end{equation*}%

\medskip

\noindent\textbf{Step 1.}
We claim that $\mathrm{rank}(\mathbb{Y}_{\ell ,K_\ell})=K_\ell\wedge (2\ell+1)$ with probability one. This follows from the left action $\mcL$  of $SO(3)$ on $\mrmL^2(\mbbS^2)$ which induces a representation of $SO(3)$ on $\mrmL^2(\mbbS^2)$. More precisely, if $\mcH_{\ell}$ denotes the function space generated by the spherical harmonics $\{Y_{\ell,m}\}_{m=-\ell,\dots,\ell}$, then the couple $(\mcL,\mcH_{\ell})$ defines an irreducible representation of $SO(3)$ \cite[Proposition 3.27]{marinucci2011random}, which means that no non-trivial subspace of $\mcH_\ell$ is preserved under the action~$\mcL$. This implies the existence of $2\ell+1$ points $\zeta_k\in\mbbS^2$  such that $\{\textbf{Y}_\ell(\zeta_1),\dots,\textbf{Y}_\ell(\zeta_{2\ell+1})\}$ is a non-degenerate matrix (see also \cite[Lemma 6]{Muller1966SphericalHarmonics}). Therefore if we set $d=K\wedge (2\ell+1)$, there exists at least a $d\times d$ minor $[\textbf{Y}_\ell(\zeta_1),\dots,\textbf{Y}_\ell(\zeta_d)]_{d\times d}$ non-degenerate and hence such that $\det([\textbf{Y}_\ell(\zeta_1),\dots,\textbf{Y}_\ell(\zeta_d)]_{d\times d})\neq 0$ (up to relabeling the points we may assume it involves the first $d$ points, and hereafter we fix one such minor and for any matrix $[\cdot]_{d\times d}$ denotes its minor in correspondence with our first choice). 
Next, consider the function $F\colon (\mbbR^3)^d\to\mbbC$ defined as
\[F(x_1,x_2,\dots,x_d)\coloneqq \det\biggl(\biggr[\textbf{Y}_\ell\biggl(\frac{x_1}{\|x_1\|}\biggr),\dots,\textbf{Y}_\ell\biggl(\frac{x_d}{\|x_d\|}\biggr)\biggr]_{d\times d} \biggr)\,. \]
On the open domain $\Omega=(\mbbR^3)^d\setminus B_{\nicefrac12}(0)$ this is a real-analytic function, more precisely both its real and imaginary components $\textit{Re}(F),\,\textit{Im}(F)$ are real-analytic. 

Now, suppose by absurd that 
\begin{equation*}
\mbbP[\mathrm{rank}([\mathbb{Y}_{\ell ,K_\ell}]_{d\times d})<d]=\mbbP[\det([\textbf{Y}_\ell(\xi_1),\dots,\textbf{Y}_\ell(\xi_d)]_{d\times d} )]=0]=\mbbP[F(\xi_1,\dots,\xi_d)]>0\,,
\end{equation*}
and hence that $F$ vanishes on a subset of the sphere with positive spherical-volume. Then, since $F$ is constant along radial trajectories, this further implies that $F$ vanishes on a subset of $\Omega$ with positive Lebesgue measure. Since $\textit{Re}(F),\,\textit{Im}(F)$ are real-analytic and their zeros sets have positive measure we conclude that both $\textit{Re}(F)$ and $\textit{Im}(F)$ are null \cite[Proposition 1]{Mityagin2020}. This yields to $F\equiv 0$ on $\Omega$ which clearly contradicts the fact that $F(\xi_1,\dots,\xi_d)=\det([\textbf{Y}_\ell(\zeta_1),\dots,\textbf{Y}_\ell(\zeta_d)]_{d\times d})\neq 0$.

Therefore we may conclude that $\mbbP[\mathrm{rank}([\mathbb{Y}_{\ell ,K_\ell}]_{d\times d})<d]=0$
and hence that with probability one the random matrix $\mathbb{Y}_{\ell ,K_\ell}$ has a $d\times d$ minor of full-rank and hence that  $\mathrm{rank}(\mathbb{Y}_{\ell ,K_\ell})=K_\ell\wedge (2\ell+1)$.

\medskip

\noindent\textbf{Step 2.} As a direct consequence of the first step, if we take $K_\ell=2\ell+1$, almost surely there exist coefficients  $\eta
_{\ell 1},...,\eta_{\ell K_\ell}\in\mbbC$ such that%
\begin{equation*}
\mathbf{a}_{\ell }\coloneqq (a_{\ell m})_{m=-\ell,\dots,\ell}=\sum_{k=1}^{K_\ell}\eta_{\ell k }\mathbf{Y}_{\ell}(\xi_k)=\mathbb{Y}
_{\ell , K} \mathbf{\eta}_{\ell }\,,
\end{equation*}%
where $\mathbf{\eta }_{\ell }=(\eta _{\ell 1},...,\eta _{\ell K_\ell})^{T}$. Therefore, for any fixed $\ell\in\mbbN$ we have almost surely 
\begin{equation}\label{eq:dec:T:ell}
\begin{aligned}
    T_\ell(x)\coloneqq&\,\sum_{m=-\ell}^\ell a_{\ell m}Y_{\ell m}(x)=\mathbf{a}_{\ell }^T\, \mathbf{Y}_{\ell}(x)=\mathbf{\eta}_{\ell }^T\mathbb{Y}_{\ell , K_\ell}^T\mathbf{Y}_{\ell}(x)\\
    =&\,\sum_{k=1}^{K_\ell} \eta_{\ell k}\, \sum_{m=-\ell}^\ell Y_{\ell m}(\xi_k)Y_{\ell m}(x)=\sum_{k=1}^{K_\ell} \eta_{\ell k}\frac{2\ell+1}{4\pi} P_{\ell}(\langle x,\xi_k\rangle)\,.
\end{aligned}
\end{equation}
Moreover, since the field $T$ (and hence $T_\ell$) is real, from the independence of the uniform directions $(\xi_k)_{k=1,\dots,K_\ell} \overset{\text{i.i.d.}}{\sim}\mathrm{Unif}(\mbbS^2)$ and from the above expression we conclude that the random weights $(\eta_{\ell k})_{\ell,k}$ are real as well. Finally, since  $\ell\in\mbbN$ and for any fixed $\ell\geq 0$ the decomposition~\eqref{eq:dec:T:ell} holds almost surely, we may conclude that our decomposition holds almost surely for the whole field $T=\sum_{\ell=0}^\infty T_\ell $.
\end{proof}

The previous proof further provides an explicit construction for the coefficients $(\eta_{\ell k})_{k=1,\dots,K}$ as a function of harmonic coefficients. Indeed, if the $\ell^{th}$ frequency component of a strongly sparse random field equals $T_{\ell}=\sum_{m=-\ell}^\ell a_{\ell m}Y_{\ell m}$, then the coefficients of the strongly sparse representation can be easily computed  by applying the Moore-Penrose inverse:
\[\mathbf{\eta}_{\ell \cdot}=(\mathbb{Y}_{\ell,K}^\star\mathbb{Y}_{\ell,K})^{-1}\mathbb{Y}_{\ell,K}^\star \mathbf{a}_\ell\,.\]
This further suggests that the class of strongly sparse random fields does not contain Gaussian fields. This is a standard result that we recall here with our notation.

\begin{corollary}\label{cor:gauss:no:sparse}
    No monochromatic isotropic Gaussian random field  can be sparse.
    As a consequence, no Gaussian isotropic random field can be strongly sparse.
\end{corollary}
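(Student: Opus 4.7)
The plan is to reduce the general claim to the monochromatic case, then derive a contradiction from the representation in \Cref{thm:superposition:representation}. For the reduction: a Gaussian isotropic $T=\sum_\ell T_\ell$ decomposes as a sum of mutually independent Gaussian isotropic monochromatic components (joint Gaussianity plus uncorrelatedness across multipoles, both enforced by isotropy, actually gives independence), with $C_\ell>0$ for $\ell\in\mcS$. Strong sparsity of $T$ would force the projection on $\mcS(L)$ to be generated by $N=O((\sum_{\ell\in\mcS(L)}(2\ell+1))^\gamma)$ real random variables with $\gamma\in[0,1)$, which for $L$ large is strictly smaller than the total number of non-zero Gaussian coefficients in $\mcS(L)$. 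By independence across multipoles, at least one component $T_{\ell_0}$ with $\ell_0$ large must then be measurable with respect to fewer than $2\ell_0+1$ real variables, i.e.\ sparse as a monochromatic field; so it suffices to prove the monochromatic case.

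For the monochromatic case, suppose by contradiction that $T_\ell$ is Gaussian isotropic with $C_\ell>0$ and sparse. Applying \Cref{thm:superposition:representation} with parameter $K_\ell<2\ell+1$ yields $\mathbf{a}_\ell=\mathbb{Y}_{\ell,K_\ell}(\xi)\,\mathbf{\eta}_\ell$, and by Step~1 of its proof the random matrix $\mathbb{Y}_{\ell,K_\ell}(\xi)$ has rank exactly $K_\ell$ almost surely. Conditionally on the directions $\xi$, the vector $\mathbf{a}_\ell$ is therefore confined to the proper random subspace $V_\xi:=\mathrm{range}(\mathbb{Y}_{\ell,K_\ell}(\xi))\subsetneq\mbbC^{2\ell+1}$, so that the marginal law of $\mathbf{a}_\ell$ is a mixture of distributions each concentrated on a proper linear subspace.

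The final step — and main obstacle — is to derive a contradiction between this mixture structure and the required full-rank Gaussian marginal $N(0,C_\ell I_{2\ell+1})$. The cleanest route is at the level of characteristic functions: writing $\phi_{\mathbf{a}_\ell}(t)=\mbbE_\xi[\phi_\xi(t)]$, support of the conditional law on $V_\xi$ forces $\phi_\xi(t)=1$ for $t\in V_\xi^\perp$. A Jensen-type estimate applied to $\mbbE_\xi[e^{-\frac{1}{2}t^\star M_\xi t}]$ (with $M_\xi$ the conditional covariance, of rank at most $K_\ell$) then gives strict inequality — valid because $t^\star M_\xi t$ cannot be $\xi$-a.s.\ constant along directions $t$ lying in $\ker M_\xi$ for some but not all $\xi$ — yielding $\phi_{\mathbf{a}_\ell}(t)>e^{-\frac{1}{2}C_\ell\|t\|^2}$, and contradicting Gaussianity of the marginal. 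Equivalently, one phrases the obstruction via the Gaussian Hilbert space: the $2\ell+1$ uncorrelated non-degenerate Gaussian coefficients $a_{\ell m}$ are mutually independent and span a $(2\ell+1)$-dimensional Gaussian subspace of $L^2(\Omega)$, which is incompatible with $T_\ell$ being measurably driven by only $K_\ell<2\ell+1$ independent real sources. The main delicacy is precisely in formalizing this dimensional intuition when the driving pair $(\mathbf{\eta}_\ell,\xi)$ is not itself Gaussian.
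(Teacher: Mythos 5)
Your overall strategy coincides with the paper's: reduce to a single multipole, invoke the representation $\mathbf{a}_\ell=\mathbb{Y}_{\ell,K}\boldsymbol\eta_\ell$ with $K<2\ell+1$ from \Cref{thm:superposition:representation}, and derive a contradiction with the non-degeneracy of the $(2\ell+1)$-dimensional Gaussian vector $\mathbf{a}_\ell$. The paper's own proof stops essentially where you stop, asserting that the conclusion is ``clearly a contradiction''; your added contribution is the attempt to formalize that last step, and it is precisely there that your argument has a genuine gap.

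The route you sketch does not work as stated, for two reasons. First, the conditional law of $\mathbf{a}_\ell$ given the directions $\xi$ need not be Gaussian (nothing forces $\boldsymbol\eta_\ell$ to be conditionally Gaussian given $\xi$), so there is no conditional covariance $M_\xi$ with $\phi_\xi(t)=e^{-\frac12 t^\star M_\xi t}$, and the Jensen step has no starting point. Second, and more fundamentally, the fact that the law of $\mathbf{a}_\ell$ is a mixture of laws each concentrated on a proper subspace is not by itself incompatible with a nondegenerate Gaussian marginal: the polar decomposition $Z=\|Z\|\cdot(Z/\|Z\|)$ exhibits a standard Gaussian vector in $\mbbR^d$ as exactly such a mixture over uniformly random lines, and indeed for $\ell=1$, $K=1$ one has $\mathbf{Y}^R_1(\xi)\propto\xi$, so the monochromatic Gaussian field at $\ell=1$ \emph{does} admit a one-direction representation. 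What actually yields the contradiction is that the subspaces $V_\xi$ do not range over the full Grassmannian: $\xi_k\mapsto\mathbf{Y}^R_\ell(\xi_k)$ is a smooth map defined on the $2$-dimensional manifold $\mbbS^2$, so the law of $\mathbf{a}^R_\ell$ is supported in the image of the smooth map $(c_1,\dots,c_K,z_1,\dots,z_K)\mapsto\sum_k c_k\mathbf{Y}^R_\ell(z_k)$, whose domain is a $3K$-dimensional manifold. When $3K<2\ell+1$ this image is Lebesgue-null in $\mbbR^{2\ell+1}$, whereas a nondegenerate Gaussian vector assigns zero mass to every Lebesgue-null set --- a genuine contradiction. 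This covers the sparse regime $K=O(\ell^{\gamma})$, $\gamma<1$, for $\ell$ large, which is all the corollary needs, but note it does not rule out every $K<2\ell+1$; that stronger claim, implicit both in your write-up and in the paper's one-line conclusion, is false as the $\ell=1$ example shows.
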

\begin{proof}
It suffices to show that no monochromatic Gaussian random field $G_{\ell}=\sum_{m=-\ell}^\ell a_{\ell m}^GY_{\ell m}$, \ie, such that there are $K<2\ell+1$  i.i.d. uniform random variables on the sphere $(\xi_k)_{k=1,\dots,K}\overset{\text{i.i.d.}}{\sim}\mathrm{Unif}(\mbbS^2)$ such that  
    \begin{equation*}
        G_{\ell}(x)=\sum_{k=1}^{K} \eta_{\ell k}\frac{2\ell+1}{4\pi} P_{\ell}(\langle x,\xi_k\rangle)\,.
    \end{equation*}
Now, assuming that such decomposition holds and considering the real-spherical harmonics $\{Y_{\ell m}^R\}_{m=-\ell,\dots,\ell}$ it is immediate to see that the (real-valued) harmonic coefficients associated to $G_{\ell}$ are given by
 \begin{equation*}
   \begin{aligned}
  a_{\ell m}^R=&\,\int_{\mbbS^2}G_{\ell}(x)\,Y_{\ell m}(x)\De \vol=\sum_{k=1}^{K} \eta_{\ell k}\frac{2\ell+1}{4\pi} \int P_{\ell}(\langle x,\xi_k\rangle)Y_{\ell m}^R(x)\,\De\vol\\
  =&\,\sum_{k=1}^{K} \eta_{\ell k}\,Y_{\ell m}^R(\xi_k)\,.
   \end{aligned}
   \end{equation*}
   Therefore, the $(2\ell+1)$-Gaussian vector $\textbf{a}^R_{\ell}$ can be written as a combination of $K<2\ell+1$ random variables $\{\textbf{Y}_{\ell}^R(\xi_k)\}_{k=1,\dots,K}$, which is clearly a contradiction.

\end{proof}

\begin{remark}[Empirical power spectrum]\label{remark:varianza:empirica:eta}
Even though the random weights $\eta_{\ell k}$ are not independent of the directions $\xi_k$ in general, we can still study the relationship between the realization of these random variables and the empirical angular power spectrum $\widehat{C}_\ell$ associated to our field's realization. More precisely we may mimic the computations performed for \Cref{remark:varianza:eta} and deduce that, almost surely for any fixed realization, we have
    \begin{equation*}
    \begin{aligned}
      \widehat{C}_\ell=&\frac{1}{2\ell+1}\sum_{m=-\ell}^\ell |\widehat{a}|^2_{\ell m} =\frac{1}{2\ell+1}\sum_{m=-\ell}^\ell  \biggl|\int_{\mbbS^2} T_\ell(x)Y_{\ell m}(x)\,\De \vol(x)\biggr|^2\\
      =&\,\frac{1}{2\ell+1}\sum_{m=-\ell}^\ell\biggl|\int_{\mbbS^2}\sum_{k=1}^{K_\ell}\eta _{\ell k}\frac{2\ell +1}{4\pi }
P_{\ell }(\left\langle x,\xi _{k}\right\rangle )Y_{\ell m}(x)\,\De \vol(x)\biggr|^2\\
=&\,\frac{1}{2\ell+1}\sum_{m=-\ell}^\ell\biggl(\int_{\mbbS^2}\sum_{k=1}^{K_\ell}\eta _{\ell k}\sum_{m^\prime=-\ell}^\ell Y_{\ell m^\prime}(\xi_k)\overline{Y}_{\ell m^\prime}(x) Y_{\ell m}(x)\,\De \vol(x)\biggr)^2\\
=&\,\frac{1}{2\ell+1}\sum_{m=-\ell}^\ell\biggl|\sum_{k=1}^{K_\ell}\eta _{\ell k}Y_{\ell m}(\xi_k)\biggr|^2=\frac{1}{2\ell+1}\sum_{h,k=1}^{K_\ell}\eta_{\ell k}\eta_{\ell h}\mathbf{Y}_\ell(\xi_k)^T\overline{\mathbf{Y}}_\ell(\xi_h)\,.
\end{aligned}
    \end{equation*}
    Therefore $\widehat{C}_\ell$ can explicitly be calculated as the quadratic norm of the vector of weights $\{\eta_{\ell k}\}_{k}$ weighted with the spherical harmonics scalar product  $\mathbf{Y}_\ell(\xi_k)^T\overline{\mathbf{Y}}_\ell(\xi_h)$. In particular, for $K_{\ell}=1$ we have  $4\pi\widehat{C}_\ell=\eta_{\ell}^2$.
\end{remark}

\subsection{Sparse reconstruction algorithm for monochromatic fields}\label{sec:proof:algo}
We conclude this section by analyzing \Cref{algo:sparse:approx} by showing how it can be equivalently defined at the harmonic coefficients' level. Recall that at each iteration step we choose the best direction by solving 
\begin{equation}\label{eq:def:xi_k}
    \xi_k\in \argmax_{x\in\mbbS^2} |T_{\ell}(x,k-1)|^2= \argmax_{x\in\mbbS^2} |\langle \mathbf{a}_\ell(k-1),\,\mathbf{Y}_\ell(x)\rangle|^2\,,
\end{equation}
where $\mathbf{a}_\ell(k)$ denote the harmonic coefficients of the residual field $T_{\ell}(\cdot,k)$ which is iteratively defined via
\begin{equation*}T_{\ell}(\cdot,k)\coloneqq T_{\ell}(\cdot,k-1)-T_{\ell}(\xi_k,k-1)P_\ell(\langle \xi_k,\cdot\rangle)\,.
\end{equation*}
Therefore, the update for the harmonic coefficients can be directly defined as a Gram-Schmidt orthogonalization since
\begin{equation}\label{eq:sum:over:k:iterative:step}\begin{aligned}
    \mathbf{a}_\ell(k)=&\,\int_{\mbbS^2} T_{\ell}(x,k)\mathbf{Y}_\ell(x)\De\vol(x)\\
    =&\int_{\mbbS^2} T_{\ell}(x,k-1)\mathbf{Y}_\ell(x)\De\vol(x)-T_{\ell}(\xi_k,k-1)\int_{\mbbS^2} P_\ell(\langle \xi_k,x\rangle)\mathbf{Y}_\ell(x)\De\vol(x)\\
    =&\,\mathbf{a}_\ell(k-1)-T_{\ell}(\xi_k,k-1)\frac{4\pi }{2\ell +1}\mathbf{Y}_\ell(\xi_k)=\mathbf{a}_\ell(k-1)-\frac{\langle \mathbf{a}_\ell(k-1),\,\mathbf{Y}_\ell(\xi_k)\rangle}{\|\mathbf{Y}_\ell(\xi_k)\|^2}\,\mathbf{Y}_\ell(\xi_k)\,.
\end{aligned}\end{equation}
Therefore, our algorithm reads as follows

\medskip

   \begin{algorithm}[H]
\SetAlgoLined
\KwIn{Monochromatic field's harmonics $\mathbf{a}_\ell$; 
Sparsity parameter $K\ll 2\ell+1$}
Initialize residual harmonic coefficients $\mathbf{a}_\ell(0)=\mathbf{a}_\ell$\\
\For{$k=1,\dots,\,K$}{

Find best direction $\xi_k\in \argmax_{x\in\mbbS^2} |\langle \mathbf{a}_\ell(k-1),\,\mathbf{Y}_\ell(x)\rangle|^2$\\
 Update residual harmonics $\mathbf{a}_\ell(k)\coloneqq \mathbf{a}_\ell(k-1)-\frac{\langle \mathbf{a}_\ell(k-1),\,\mathbf{Y}_\ell(\xi_k)\rangle}{\|\mathbf{Y}_\ell(\xi_k)\|^2}\,\mathbf{Y}_\ell(\xi_k)$
    }
\KwOut{Sparse superposition field $S^K(\cdot)\coloneqq \sum_{k=1}^K\langle \mathbf{a}_\ell(k-1),\,\mathbf{Y}_\ell(\xi_k)\rangle P_\ell(\langle \xi_k,\cdot\rangle)$  }
\caption{Sparse reconstruction algorithm: harmonic coefficients}\label{algo:sparse:approx:harmonic}
\end{algorithm}

\medskip

Notice that by summing over $k=1,\,\dots,\,K$ in~\eqref{eq:sum:over:k:iterative:step} we immediately see that our sparse approximation field equals
 \begin{equation}\label{eq:approx:field:equiv}
 \begin{aligned}
S^K(\cdot)=&\,\sum_{k=1}^K\langle \mathbf{a}_\ell(k-1),\,\mathbf{Y}_\ell(\xi_k)\rangle P_\ell(\langle \xi_k,\cdot\rangle)=\sum_{m=-\ell}^\ell\biggl(\sum_{k=1}^K\frac{\langle \mathbf{a}_\ell(k-1),\,\mathbf{Y}_\ell(\xi_k)\rangle}{\|\mathbf{Y}_\ell(\xi_k)\|^2}\,\mathbf{Y}_{\ell m}(\xi_k)\biggr)Y_{\ell m}(\cdot)\\
=&\,\biggl\langle \sum_{k=1}^K\frac{\langle \mathbf{a}_\ell(k-1),\,\mathbf{Y}_\ell(\xi_k)\rangle}{\|\mathbf{Y}_\ell(\xi_k)\|^2}\,\mathbf{Y}_{\ell}(\xi_k),\mathbf{Y}_\ell(\cdot)\biggr\rangle=\langle \mathbf{a}_\ell(0)-\mathbf{a}_\ell(K),\mathbf{Y}_\ell(\cdot)\rangle\\
=&\,\langle \mathbf{a}_\ell,\mathbf{Y}_\ell(\cdot)\rangle -\langle \mathbf{a}_\ell(K),\mathbf{Y}_\ell(\cdot)\rangle =T_{\ell}(\cdot)-T_{\ell}(\cdot,K)\,.
\end{aligned}
 \end{equation}

 \bigskip

The harmonic coefficients formulation obtained in~\Cref{algo:sparse:approx:harmonic} further helps us in understanding the performance of the algorithm; the latter can be seen as a Gram-Schmidt orthogonalization for $\mathbf{a}_\ell$, as we detail in the next result.

\begin{theorem}\label{thm:conv:algo:mono}
Given $(\xi_k)_{k=1,\dots,2\ell+1}$ computed according to  \Cref{algo:sparse:approx:harmonic} with $K=2\ell+1$, the vectors $\{\mathbf{Y}_\ell(\xi_1),\dots,\mathbf{Y}_\ell(\xi_{2\ell+1})\}\subseteq \mbbC^{2\ell+1}$ are orthogonal and hence linearly independent. Thus, $\mathbf{a}_\ell(2\ell+1)=\underline{0}$ and $T_{\ell}(\cdot,2\ell+1)\equiv 0$. Equivalently, the reconstruction algorithm automatically stops after $K=2\ell+1$ steps and fully recovers the original field
\begin{equation}\label{eq:decomposition:sparse:algo}
    T_{\ell}=\sum_{k=1}^{2\ell +1}T_{\ell }(\xi_{k};k-1)P_{\ell
}(\langle \xi_{k},\cdot\rangle )=\sum_{k=1}^K\langle \mathbf{a}_\ell(k-1),\,\mathbf{Y}_\ell(\xi_k)\rangle P_\ell(\langle \xi_k,\cdot\rangle)\,.
\end{equation}
\end{theorem}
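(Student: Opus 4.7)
The plan is to prove all three assertions of the theorem---pairwise orthogonality of the selected $\mathbf{Y}_\ell(\xi_k)$'s, vanishing of the residual after $2\ell+1$ steps, and the exact reconstruction formula---jointly by strong induction on the step index $k$, with the combined hypothesis: (i) the vectors $\{\mathbf{Y}_\ell(\xi_1),\dots,\mathbf{Y}_\ell(\xi_k)\}$ are pairwise orthogonal in $\mbbC^{2\ell+1}$, and (ii) the residual $\mathbf{a}_\ell(k)$ lies in the orthogonal complement of $\operatorname{span}\{\mathbf{Y}_\ell(\xi_j):\,j\le k\}$. The base case $k=0$ is vacuous. To begin each inductive step I would first verify that $\xi_k$ is well-defined and the maximum is non-trivial: as long as $\mathbf{a}_\ell(k-1)\neq\underline{0}$, Step~1 of the proof of \Cref{thm:superposition:representation} shows that $\{\mathbf{Y}_\ell(x):x\in\mbbS^2\}$ linearly spans $\mbbC^{2\ell+1}$, so the real-analytic map $x\mapsto|\langle\mathbf{a}_\ell(k-1),\mathbf{Y}_\ell(x)\rangle|^2$ cannot vanish identically on the compact sphere; hence its maximum is strictly positive and $\langle\mathbf{a}_\ell(k-1),\mathbf{Y}_\ell(\xi_k)\rangle\neq 0$.

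The main technical obstacle is extending~(i): one must show that the newly selected atom $\mathbf{Y}_\ell(\xi_k)$ is orthogonal to each $\mathbf{Y}_\ell(\xi_j)$ for $j<k$. Once this is in hand, item~(ii) propagates for free, since the update in \Cref{algo:sparse:approx:harmonic} subtracts from $\mathbf{a}_\ell(k-1)$ a scalar multiple of $\mathbf{Y}_\ell(\xi_k)$---a vector that, by the just-proved orthogonality, already lies in $\operatorname{span}\{\mathbf{Y}_\ell(\xi_j):\,j<k\}^\perp$---so the previous-step orthogonality of $\mathbf{a}_\ell(k-1)$ to the span of the previous atoms transfers to $\mathbf{a}_\ell(k)$, while orthogonality to $\mathbf{Y}_\ell(\xi_k)$ itself is immediate from the projection formula. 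To attack the orthogonality step I would combine the inductive hypothesis~(ii) at step $k-1$---which, via the addition formula $\langle\mathbf{Y}_\ell(\xi_j),\mathbf{Y}_\ell(x)\rangle=\tfrac{2\ell+1}{4\pi}P_\ell(\langle\xi_j,x\rangle)$, is equivalent to saying that the residual field $T_\ell(\,\cdot\,,k-1)$ vanishes at each $\xi_j$ with $j<k$---with the first-order criticality conditions satisfied by the real-analytic function $|T_\ell(\,\cdot\,,k-1)|^2$ at its maximizer $\xi_k$, in order to constrain $\xi_k$ to lie at a zero of $P_\ell(\langle\xi_j,\cdot\rangle)$ for every $j<k$.

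Once the induction closes at $k=2\ell+1$, we have $2\ell+1$ pairwise-orthogonal nonzero vectors in the $(2\ell+1)$-dimensional complex space, and hence a basis; by (ii) the residual $\mathbf{a}_\ell(2\ell+1)$ is orthogonal to every element of this basis and must therefore vanish, yielding $\mathbf{a}_\ell(2\ell+1)=\underline{0}$ and $T_\ell(\,\cdot\,,2\ell+1)\equiv 0$. The reconstruction identity~\eqref{eq:decomposition:sparse:algo} then follows as a direct consequence of the telescoping equality~\eqref{eq:approx:field:equiv} evaluated at $K=2\ell+1$.
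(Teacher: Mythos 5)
Your overall architecture --- strong induction on $k$ with the two-part hypothesis, propagation of the projection identity for $\mathbf{a}_\ell(k)$, and the telescoping identity \eqref{eq:approx:field:equiv} evaluated at $K=2\ell+1$ --- coincides with the paper's proof, and your preliminary remark that the argmax is nondegenerate whenever $\mathbf{a}_\ell(k-1)\neq\underline 0$ is a sensible addition that the paper omits. However, the one step that carries all the weight is left as a plan of attack that cannot succeed. You need to show that the newly selected atom satisfies $\mathbf{Y}_\ell(\xi_k)\perp\mathbf{Y}_\ell(\xi_j)$ for all $j<k$, i.e.\ that $P_\ell(\langle\xi_j,\xi_k\rangle)=0$, and you propose to extract this from the first-order criticality of $|T_\ell(\cdot,k-1)|^2$ at its maximizer combined with the vanishing of the residual at the earlier points. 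This cannot work: criticality at $\xi_k$ reads $\mathrm{Re}\bigl(\overline{T_\ell(\xi_k,k-1)}\,\nabla_{\mbbS^2}T_\ell(\xi_k,k-1)\bigr)=0$, a purely local condition at $\xi_k$ in which the points $\xi_j$ do not appear, while the vanishing $T_\ell(\xi_j,k-1)=0$ is a condition localized at $\xi_j$; neither, separately nor together, forces the inner products $\langle\xi_j,\xi_k\rangle$ to land on the (measure-zero) nodal set of $P_\ell$. The central claim of the theorem is therefore not proved in your write-up, and everything downstream (termination at $2\ell+1$ steps, $\mathbf{a}_\ell(2\ell+1)=\underline 0$, and \eqref{eq:decomposition:sparse:algo}) depends on it, since without orthogonality of the atoms the rank-one update does not preserve orthogonality of the residual to the previously selected atoms.

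You should be aware that this is precisely the point where the paper's own argument is assertive rather than demonstrative: it rewrites the objective as $|\langle\mathrm{proj}_{W^\perp}\mathbf{a}_\ell,\mathbf{Y}_\ell(x)\rangle|^2$ with $W=\mathrm{span}\{\mathbf{Y}_\ell(\xi_j)\}_{j<k}$ and concludes that the maximizer must satisfy $\mathbf{Y}_\ell(\xi_k)\in W^\perp$ ``in order to maximize its projection onto such orthogonal space''. That inference would be valid if $\mathbf{Y}_\ell(x)$ ranged over a full sphere of fixed radius in $\mbbC^{2\ell+1}$, but it is constrained to the two-dimensional image of $\mbbS^2$, and the component of $\mathbf{Y}_\ell(\xi_k)$ inside $W$ does not affect the objective at all, so the argmax characterization places no constraint on it. In short, your proposal reproduces the paper's skeleton but, like the paper, does not actually close the orthogonality step, and the specific mechanism you suggest for closing it (first-order optimality conditions) is a dead end; any genuine proof would need an argument exploiting the particular geometry of the curve $x\mapsto\mathbf{Y}_\ell(x)$, or a weakening of the orthogonality claim together with a reworked termination argument.
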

\begin{proof}
    It is enough to show that the vectors $\{\mathbf{Y}_\ell(\xi_1),\dots,\mathbf{Y}_\ell(\xi_{2\ell+1})\}\subseteq \mbbC^{2\ell+1}$ are orthogonal.
    In view of that, let us first notice that
    \begin{equation*}
        \begin{aligned}
             \mathbf{a}_\ell(k)=\mathbf{a}_\ell(k-1)-\frac{\langle \mathbf{a}_\ell(k-1),\,\mathbf{Y}_\ell(\xi_k)\rangle}{\|\mathbf{Y}_\ell(\xi_k)\|^2}\,\mathbf{Y}_\ell(\xi_k)=\mathrm{proj}_{\mathbf{Y_\ell}(\xi_k)^\perp}(\mathbf{a}_\ell(k-1))\,.
        \end{aligned}
    \end{equation*}
We will show by induction that $\xi_k$ is chosen such that $\mathbf{Y}_\ell(\xi_k)\in\{\mathbf{Y}_\ell(\xi_1),\dots,\mathbf{Y_\ell}(\xi_{k-1})\}^\perp$  and  that
\begin{equation*}
        \begin{aligned}
             \mathbf{a}_\ell(k)=\mathrm{proj}_{\{\mathbf{Y}_\ell(\xi_1),\dots,\mathbf{Y_\ell}(\xi_k)\}^\perp}(\mathbf{a}_\ell(0))=\mathrm{proj}_{\{\mathbf{Y}_\ell(\xi_1),\dots,\mathbf{Y_\ell}(\xi_k)\}^\perp}(\mathbf{a}_\ell)\,,
        \end{aligned}
    \end{equation*}
The base case $k=1$ trivially follows from our previous discussion. Now assume our claim holds for $k-1$; by  definition of $\xi_{k}$ we immediately see that
    \begin{equation*}
    \xi_k\in  \argmax_{x\in\mbbS^2} |\langle \mathbf{a}_\ell(k-1),\,\mathbf{Y}_\ell(x)\rangle|^2=\argmax_{x\in\mbbS^2} \left|\biggl\langle \mathrm{proj}_{\{\mathbf{Y}_\ell(\xi_1),\dots,\mathbf{Y_\ell}(\xi_{k-1})\}^\perp}(\mathbf{a}_\ell),\,\mathbf{Y}_\ell(x)\biggr\rangle\right|^2\,.
\end{equation*}
This shows that $\xi_k$ is chosen such that $\mathbf{Y}_\ell(\xi_k)\in\{\mathbf{Y}_\ell(\xi_1),\dots,\mathbf{Y_\ell}(\xi_{k-1})\}^\perp$ in order to maximize its projection onto such orthogonal space. From this we may conclude that 
\begin{equation*}
    \begin{aligned}
             \mathbf{a}_\ell(k)=\mathrm{proj}_{\mathbf{Y_\ell}(\xi_k)^\perp}\circ\mathrm{proj}_{\{\mathbf{Y}_\ell(\xi_1),\dots,\mathbf{Y_\ell}(\xi_{k-1})\}^\perp}(\mathbf{a}_\ell)=\mathrm{proj}_{\{\mathbf{Y}_\ell(\xi_1),\dots,\mathbf{Y_\ell}(\xi_{k})\}^\perp}(\mathbf{a}_\ell)\,,
        \end{aligned}
\end{equation*}
where the last step follows from the fact that $\mathbf{Y}_\ell(\xi_k)\in\{\mathbf{Y}_\ell(\xi_1),\dots,\mathbf{Y_\ell}(\xi_{k-1})\}^\perp$ and hence the two orthogonal projection operators commute.

This shows, by induction, the orthogonality of $\{\mathbf{Y}_\ell(\xi_1),\dots,\mathbf{Y}_\ell(\xi_{2\ell+1})\}\subseteq \mbbC^{2\ell+1}$,  and hence linear independence. Moreover, from this it immediately follows that the algorithm stops in $K=2\ell+1$ steps. Hence $\mathbf{a}_\ell(2\ell+1)=\underline{0}$ and $T_{\ell}(\cdot,2\ell+1)\equiv 0$ since  $\{\mathbf{Y}_\ell(\xi_1),\dots,\mathbf{Y}_\ell(\xi_{2\ell+1})\}^\perp=\emptyset$. Finally, \eqref{eq:decomposition:sparse:algo} follows from \eqref{eq:approx:field:equiv} with $K=2\ell+1$.
\end{proof}

\begin{remark}
 In practice, the $\xi_k$ are chosen as the (ranked) most informative directions of the field, similarly to what happens when considering principal components in PCA for covariance matrices. 
\end{remark}

\begin{remark}
In the Gaussian case, any pair of multipoles $T_{\ell},T_{\ell '}$ is independent for $\ell \neq \ell '$. As a consequence, the random variables $\xi_{\ell,k}, \xi_{\ell ',k'}$ will also be independent, and their sample realization could be exploited as a test for isotropy and Gaussianity. The possibility that some multipole components $T_{\ell},T_{\ell '}$ may show some unexpected alignment has drawn a lot of attention and a heated debate in the literature on CMB data, see in particular \cite{Copi2006}, \cite{Oliveira2020}; in these papers, possible alignments were probed by means of so-called Maxwell's vectors (see \cite{Dennis2005JPhA}, \cite{Dennis2005JPhA}), so the construction we advocate here could be used to implement an alternative testing procedure. 
\end{remark}

Lastly, let us notice that the orthogonality of the vectors$\{\mathbf{Y}_\ell(\xi_1),\dots,\mathbf{Y}_\ell(\xi_{2\ell+1})\}\subseteq \mbbC^{2\ell+1}$  established in the proof of \Cref{thm:conv:algo:mono} combined with \Cref{remark:varianza:empirica:eta} immediately implies the following result.
\begin{corollary}\label{cor:varianza:empirica}
    Fix $ K= 2\ell +1$ and consider the output sparse field obtained by applying \Cref{algo:sparse:approx} (or \Cref{algo:sparse:approx:harmonic}) to one single realization of a monochromatic field $T_\ell$, that is the field with $\eta_{\ell k}=T_{\ell }(\xi_{k};k-1)$
    \begin{equation*}
        T_\ell=\sum_{k=1}^{2\ell+1}T_{\ell }(\xi_{k};k-1)P_{\ell
}(\langle \xi_{k},\cdot\rangle )=\sum_{k=1}^{2\ell+1}\langle \mathbf{a}_\ell(k-1),\,\mathbf{Y}_\ell(\xi_k)\rangle P_\ell(\langle \xi_k,\cdot\rangle)\,.
    \end{equation*}
    Then the empirical power spectrum equals
    \begin{equation*}
        \widehat{C}_\ell=\frac{1}{2\ell+1}\sum_{k=1}^{2\ell+1}\eta_{\ell k}^2\,|\mathbf{Y}_\ell(\xi_k)|^2=(4\pi)^{-1}\sum_{k=1}^{2\ell+1}\eta_{\ell k}^2\,.
    \end{equation*}
\end{corollary}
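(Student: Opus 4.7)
The plan is to combine the explicit formula for $\widehat{C}_\ell$ derived in \Cref{remark:varianza:empirica:eta} with the orthogonality property established in \Cref{thm:conv:algo:mono}, and then invoke the addition formula for spherical harmonics to obtain the final simplification.

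More concretely, I would first recall from \Cref{remark:varianza:empirica:eta} that, for any realization of a monochromatic field expressed as a superposition of the form \eqref{eq:superposition:waves} with $K_\ell=2\ell+1$ directions, the empirical angular power spectrum admits the quadratic expression
\begin{equation*}
\widehat{C}_\ell=\frac{1}{2\ell+1}\sum_{h,k=1}^{2\ell+1}\eta_{\ell k}\eta_{\ell h}\,\mathbf{Y}_\ell(\xi_k)^T\overline{\mathbf{Y}}_\ell(\xi_h)\,.
\end{equation*}
Next, I would apply \Cref{thm:conv:algo:mono}: the directions $\xi_1,\dots,\xi_{2\ell+1}$ produced by \Cref{algo:sparse:approx:harmonic} satisfy the mutual orthogonality $\mathbf{Y}_\ell(\xi_k)^T\overline{\mathbf{Y}}_\ell(\xi_h)=0$ whenever $h\neq k$. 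This immediately collapses the double sum to the diagonal, yielding $\widehat{C}_\ell=\frac{1}{2\ell+1}\sum_{k=1}^{2\ell+1}\eta_{\ell k}^2\,\lvert\mathbf{Y}_\ell(\xi_k)\rvert^2$, which is exactly the first claimed equality.

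For the second equality, I would invoke the addition formula for spherical harmonics, namely $\sum_{m=-\ell}^{\ell}\lvert Y_{\ell m}(x)\rvert^2=\frac{2\ell+1}{4\pi}P_\ell(1)=\frac{2\ell+1}{4\pi}$ for every $x\in\mbbS^2$. Substituting $\lvert\mathbf{Y}_\ell(\xi_k)\rvert^2=\frac{2\ell+1}{4\pi}$ into the diagonal sum produces the desired normalization $(4\pi)^{-1}\sum_{k=1}^{2\ell+1}\eta_{\ell k}^2$. I expect no genuine obstacle here; the only point that requires care is noting that the random weights output by the algorithm are real (as observed after~\eqref{eq:dec:T:ell}) so that $\eta_{\ell k}^2=\lvert\eta_{\ell k}\rvert^2$, and that the orthogonality from \Cref{thm:conv:algo:mono} is precisely stated with respect to the Hermitian inner product used in \Cref{remark:varianza:empirica:eta}, so the two statements align without any extra manipulation.
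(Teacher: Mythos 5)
Your proof is correct and is precisely the argument the paper intends: the paper itself states that the corollary follows by combining the quadratic formula for $\widehat{C}_\ell$ from \Cref{remark:varianza:empirica:eta} with the orthogonality of $\{\mathbf{Y}_\ell(\xi_1),\dots,\mathbf{Y}_\ell(\xi_{2\ell+1})\}$ established in \Cref{thm:conv:algo:mono}, followed by the addition formula $\sum_{m}\lvert Y_{\ell m}(x)\rvert^2=\tfrac{2\ell+1}{4\pi}$, which is exactly your chain of steps. The only caveat --- inherited from the paper's own statement rather than introduced by you --- is that the formula in \Cref{remark:varianza:empirica:eta} presupposes the normalization $T_\ell=\sum_k\eta_{\ell k}\tfrac{2\ell+1}{4\pi}P_\ell(\langle\cdot,\xi_k\rangle)$, whereas the corollary identifies $\eta_{\ell k}=T_\ell(\xi_k;k-1)$ in an expansion written without the factor $\tfrac{2\ell+1}{4\pi}$, so strictly speaking the two conventions differ by that constant.
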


\subsection{Sparse reconstruction algorithm for polychromatic fields}\label{sec:proof:algo:colorati}
In this last section we suggest a possible generalization of our algorithm for polychromatic isotropic random fields. In what follows we fix a spherical random field $T$ with power spectrum $(C_\ell)_{\ell\geq 0}$ and sample empirical power spectrum 
\begin{equation*}
   \widehat{C}_\ell=\frac{1}{2\ell+1}\sum_{m=-\ell}^\ell \widehat{a}^2_{\ell m} =\frac{1}{2\ell+1}\sum_{m=-\ell}^\ell \langle T, Y_{\ell m}\rangle_{\mrmL^2(\mbbS^2)}^2\,,
\end{equation*}
which we consider here being a random variable depending on the realization of the original random field $T(x)=T(\omega;x)$.

A priori we further assume the original field $T$ to admit a decomposition as in \Cref{thm:superposition:representation} with positive weights. A discussion on this last assumption is postponed to the end of the current section.

Owing to the computations portrayed in \Cref{remark:varianza:empirica:eta} and \Cref{cor:varianza:empirica}, we know that any sparse approximation of the original field with $K=1$ must satisfy 
$\eta_\ell^2=4\pi \widehat{C}_\ell$. Therefore, in this single-wave first approximation a good weight candidate is given by the empirical spectrum, \ie, we can consider the random weights  $\widehat{\eta}_{\ell 1}\coloneqq \sqrt{4\pi \widehat{C}_\ell}$, which guaranties our variance constraint. Therefore, we can consider as a first approximation the random field
\begin{equation*}
    G(x,\xi)\coloneqq \sum_{\ell\geq 0}\widehat{\eta}_{\ell 1} \frac{2\ell+1}{4\pi} P_\ell(\langle x,\xi\rangle)\,, 
\end{equation*}
where the parameter $\xi\in\mbbS^2$ is chosen in order to maximize the projection of the original field onto $G(x,\xi)$. More precisely, we seek
\begin{equation*}
    \xi_1\in \argmax_{\xi\in\mbbS^2}\frac{|\langle G(\cdot,\xi),\,T(\cdot)\rangle_{\mrmL^2(\mbbS^2)}|}{\|T\|_{\mrmL^2(\mbbS^2)}^2}=\argmax_{\xi\in\mbbS^2}\frac{|\langle G(\cdot,\xi),\,T(\cdot)\rangle_{\mrmL^2(\mbbS^2)}|}{\sum_{\ell\geq 0}(2\ell+1)\widehat{C}_\ell}\,,
\end{equation*}
and hence a direction which maximizes the explanatory power of our approximation. Since 
\begin{equation*}
\begin{aligned}
   \langle G(\cdot,\xi),\,T(\cdot)\rangle_{\mrmL^2(\mbbS^2)}=&\,\sum_{\ell\geq 0}\sum_{m=-\ell}^\ell\int \widehat{\eta}_{\ell 1}\frac{2\ell+1}{4\pi}P_\ell(\langle x,\xi\rangle)\widehat{a}_{\ell m} Y_{\ell m}(x)\De \vol\\
   =&\,\sum_{\ell\geq 0}\sum_{m=-\ell}^\ell \widehat{\eta}_{\ell 1} \widehat{a}_{\ell m} Y_{\ell m}(\xi)=\sum_{\ell\geq 0}\sqrt{4\pi \widehat{C}_\ell} \,T_{\ell}(\xi)\,,
\end{aligned}
\end{equation*}
we can equivalently seek for $\xi_1\in\mbbS^2$ maximizing the projection index 
\begin{equation*}
    R (\xi)\coloneqq\sqrt{4\pi}\frac{|\sum_{\ell\geq 0}\sqrt{\widehat{C}_\ell} \,T_{\ell}(\xi)|}{\sum_{\ell\geq 0}(2\ell+1)\widehat{C}_\ell}\,.
\end{equation*}

\medskip

To summarize, so far we have defined the sparse field with $K=1$
\begin{equation*}
    T(\cdot\,;1)\coloneqq \sum_{\ell\geq 0}\widehat{\eta}_{\ell 1} \frac{2\ell+1}{4\pi} P_\ell(\langle \cdot,\xi_1\rangle)\,,
\end{equation*}
and $R(\xi_1)=\frac{|\langle T(\cdot\,;1),\,T\rangle_{\mrmL^2(\mbbS^2)}|}{\|T\|^2_{\mrmL^2(\mbbS^2)}}$ measures its expressiveness. More precisely, if for a fixed small tolerance $\varepsilon>0$ we have $R(\xi_1)>1-\varepsilon$ then $T(\cdot\,;1)$ faithfully represents the original field. 

On the contrary, if $R(\xi_1)\leq 1-\varepsilon$ we can consider the residual field
$T^R(\cdot)=T(\cdot)-R(\xi_1)T(\cdot\,;1)$ and iterate the previous steps for this residual random field. This would eventually generate a new sequence of weights $\widehat{\eta}_{\ell 2}$ and a direction $\xi_2\in\mbbS^2$ together with a projection index $R(\xi_2)$. The latter will  measure how far the residual field would be from this new single wave approximation; hence, it will allow to control whether the procedure should be halted or whether it should proceed with a new iteration.
\medskip

We summarize the above discussion in the following algorithm; here, we take a fixed parameter $K$ as we did in  \Cref{algo:sparse:approx} and \Cref{algo:sparse:approx:harmonic}.

\medskip

   \begin{algorithm}[H]
\SetAlgoLined
\KwIn{Realization of a random field T with empirical power spectrum $\widehat{C}_\ell$; 
Tolerance parameter $\varepsilon>0$; Maximal sparsity parameter $K$}
Start counter $k=0$\\
Initialize sparse approximation component $S(\cdot\,;0)=0$ and output field $F(\cdot)=0$\\
Initialize projection index $R=0$\\
Initialize residual field $T(\cdot\,;0)=T(\cdot)$ and its empirical power spectrum $\widehat{C}_\ell(0)=\widehat{C}_\ell$\\
\While{$R\leq 1-\varepsilon$ and $k<K$}{
Update output field $F(\cdot)\leftarrow F+R\,S(\cdot;k)$
Update residual field $T(\cdot\,; k+1)=T(\cdot\,;k)-R \,S(\cdot\,;k)=T(\cdot)-F(\cdot)$\\
$k\leftarrow k+1$\\
Compute residual empirical power spectrum $\widehat{C}_\ell(k)$\\
Define $\hat{\eta}_{\ell k}=\sqrt{4\pi \widehat{C}_\ell(k)}$\\
Find $\xi_k=\argmax\sum_{\ell\geq 0}\sqrt{ \widehat{C}_\ell(k)}T_{\ell}(\cdot\,;k)$\\
Built new sparse component $S(\cdot\,;k)= \sum_{\ell\geq 0}\widehat{\eta}_{\ell k} \frac{2\ell+1}{4\pi} P_\ell(\langle \cdot,\xi_k\rangle)$\\
Update projection index $R=\sqrt{4\pi}\frac{\sum_{\ell\geq 0}\sqrt{ \widehat{C}_\ell(k)}T_{\ell}(\cdot\,;k)}{\sum_{\ell\geq 0}(2\ell+1)\widehat{C}_\ell(k)}$
 }
 Last update of output field $F(\cdot)\leftarrow F(\cdot)+S(\cdot\,;k)$\\
\KwOut{Return sparse field $F(\cdot)$  }
\caption{Sparse reconstruction algorithm for a polychromatic field}\label{algo:sparse:approx:colorato}
\end{algorithm}

\medskip

Lastly, let us briefly comment on the assumption that our original field $T$ admits a decomposition as in \Cref{thm:superposition:representation} with positive weights. This is a simplicity assumption, motivated by our algorithm design since at each iteration we define $\hat{\eta}_{\ell k}=\sqrt{4\pi \widehat{C}_\ell(k)}$ in order to enforce the validity of $|\widehat{\eta}_{\ell k}|^2=4\pi C_\ell(k)$ (see also the discussion in \Cref{remark:varianza:empirica:eta} and \Cref{cor:varianza:empirica}). However, this last condition is clearly met also for the choice $\hat{\eta}_{\ell k}=-\sqrt{4\pi \widehat{C}_\ell(k)}$. We hence believe that a suitable modification of the algorithm should be able to relax our current assumption; we plan to address the mathematical analysis and generalization of this polychromatic sparse algorithm in future works.

\section{Some Numerical Evidence}\label{sec:numerics}
In this section we present some numerical examples; indeed, we plot strongly sparse random fields which are meant to approximate three different Whittle--Matérn fields, \ie, the isotropic Gaussian random fields with power spectrum
\begin{equation*}
    C_\ell=(1+\ell)^{-2\beta}\,, \quad\text{ for }\beta\in\{1.01,\,1.5,\,2\}\,.
\end{equation*}

Actually for the exact Whittle--Matérn fields the angular power spectrum should take the form (up to constants) $C_\ell=(1+\ell(\ell+1))^{-\beta}$; our formulation entails only a negligible numerical approximation. More precisely, for each choice of $\beta$ we consider  random fields generated according to ~\eqref{eq:superposition:waves}, \ie, of the form
\begin{equation*}
T(x)=\sum_{\ell=0}^{L_{\max}}\sum_{k=1}^{K}\eta _{\ell k}\frac{2\ell +1}{4\pi }%
P_{\ell }(\left\langle \xi _{k},x\right\rangle )\text{ , where }\xi _{k}\sim 
\mathrm{Unif}(\mbbS^{2})\,,  
\end{equation*}%
with $K\ll L_{\max}$ and with $\{\eta_{\ell k}\}_{\ell k}$ independent of the random directions $\{\xi_k\}_{k=1,\dots,K}$. We have chosen an i.i.d. collection of random weights $\{\eta_{\ell k}\}_{\ell k}$ for which, owing to \Cref{remark:varianza:eta}, we know it must hold
\begin{equation*}
    \mbbE[\eta_{\ell k}^2]=\frac{4\pi}{K}\, C_\ell\,,\quad\forall \,k\in\{1,\dots,K\} \text{ and }\forall\,\ell\geq 0\,.
\end{equation*}
In particular, we consider random weights of the form 
\[\eta_{\ell k}\coloneqq  u_{\ell k}\,\sqrt{\frac{4\pi}{K}\, C_\ell}\,,\]
where $\{u_{\ell k}\}_{\ell k}$ is a sequence of unit-variance centered random variables. In our  simulations $\{u_{\ell k}\}_{\ell, k}$ we took either a sequence of i.i.d. standard Gaussians (left columns in \Cref{fig:101,fig:150,fig:200} ) or a sequence of i.i.d. centered Bernoulli random variables (taking values $\{\pm1\}$, right columns in \Cref{fig:101,fig:150,fig:200}).

\medskip

Our plots are based on the Python package Healpy, currently the standard for spherical random fields simulations in dimension 2; it is based on the HEALPix\footnote{\url{http://healpix.sourceforge.net}} C++ library \cite{Zonca2019, 2005ApJ...622..759G}. For the pixelization of the sphere we have considered $n_\mathrm{side}=64$. 
In the simulations reported in \Cref{fig:101,fig:150,fig:200} we have fixed $L_{\max}=128$ and considered $K=4,24,40$ directions. Therefore, the number of parameters required for our strongly-sparse simulations is bounded by $K+K\times (L_{\max}+1)\leq 5200$, far smaller than the $(L_{\max}+1)^2=16641$ parameters (the harmonic coefficients) required for the generation of Gaussian random fields with the same resolution $L_{\max}$. 
Heuristically, \Cref{fig:101,fig:150,fig:200} shows that even the choice $K=24$ guaranties a good visual approximation of the original Gaussian random field, though of course more formal testing procedures will be able to detect significant differences. Indeed, these simulations could also provide a quick procedure to generate nonGaussian isotropic fields, to be used as testing benchmark for nonGaussianity detection procedures. 

\medskip

Finally, we repeated the numerical experiment with the same values of $K$, but higher resolution maps (i.e., $L_{\max}=256$ and $n_{\mathrm{side}}=128$). \Cref{fig:256_101,fig:256_150} show that the parameter $K$ does not need to grow above the values we considered previously to achieve very good visual approximations; note that these maps are generated by means of $K+K \times (L_{\max}+1)=10.320 $, random variables, rather than the $(L_{\max}+1)^2=66.049$ required for Gaussian maps.

\bigskip

\begin{figure}[h!]
    \centering
    % --- riga 1 ---
    \begin{subfigure}{0.45\textwidth}
        \includegraphics[width=\linewidth]{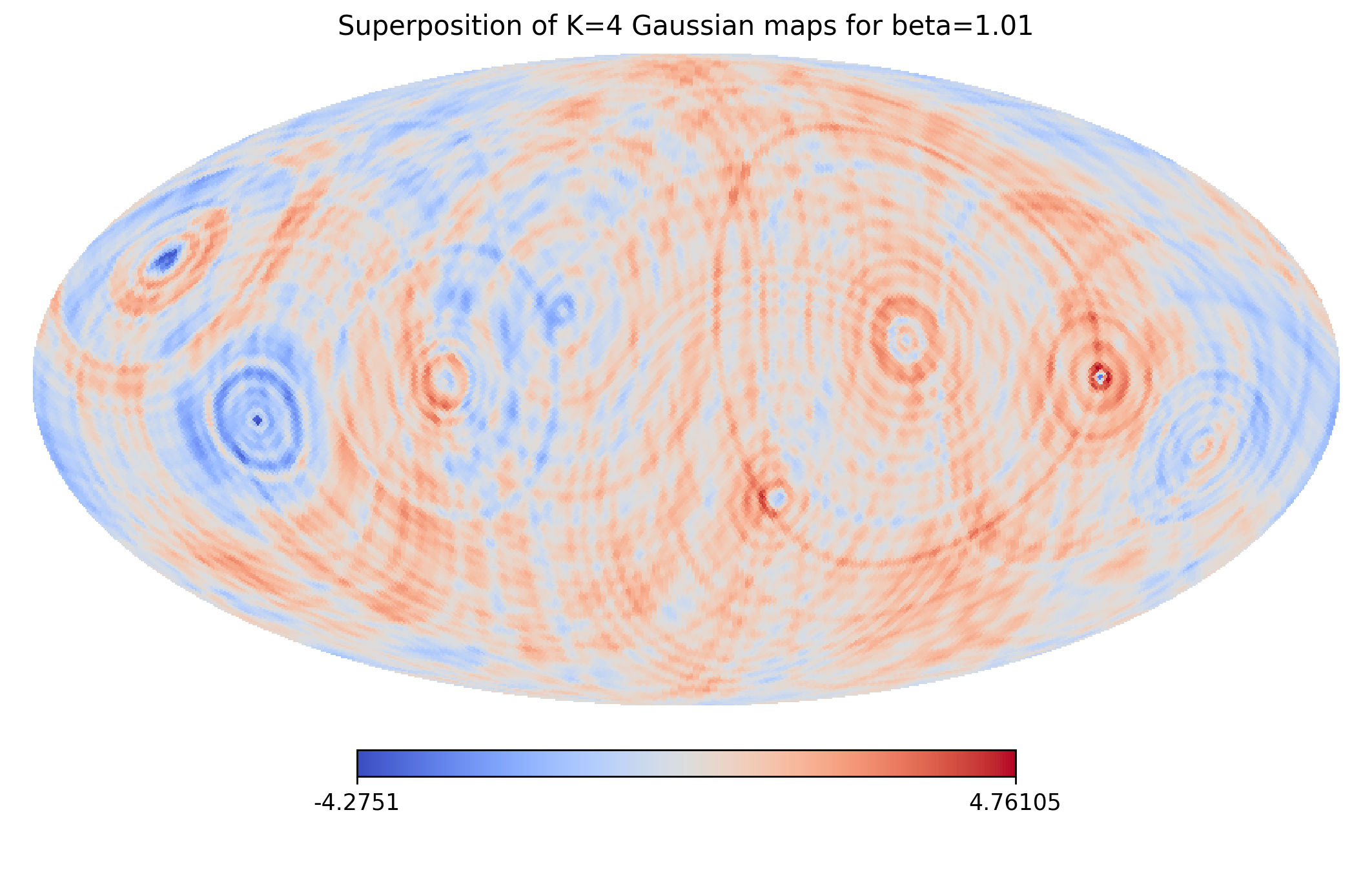}
    \end{subfigure}
        \hfill
    \begin{subfigure}{0.45\textwidth}
        \includegraphics[width=\linewidth]{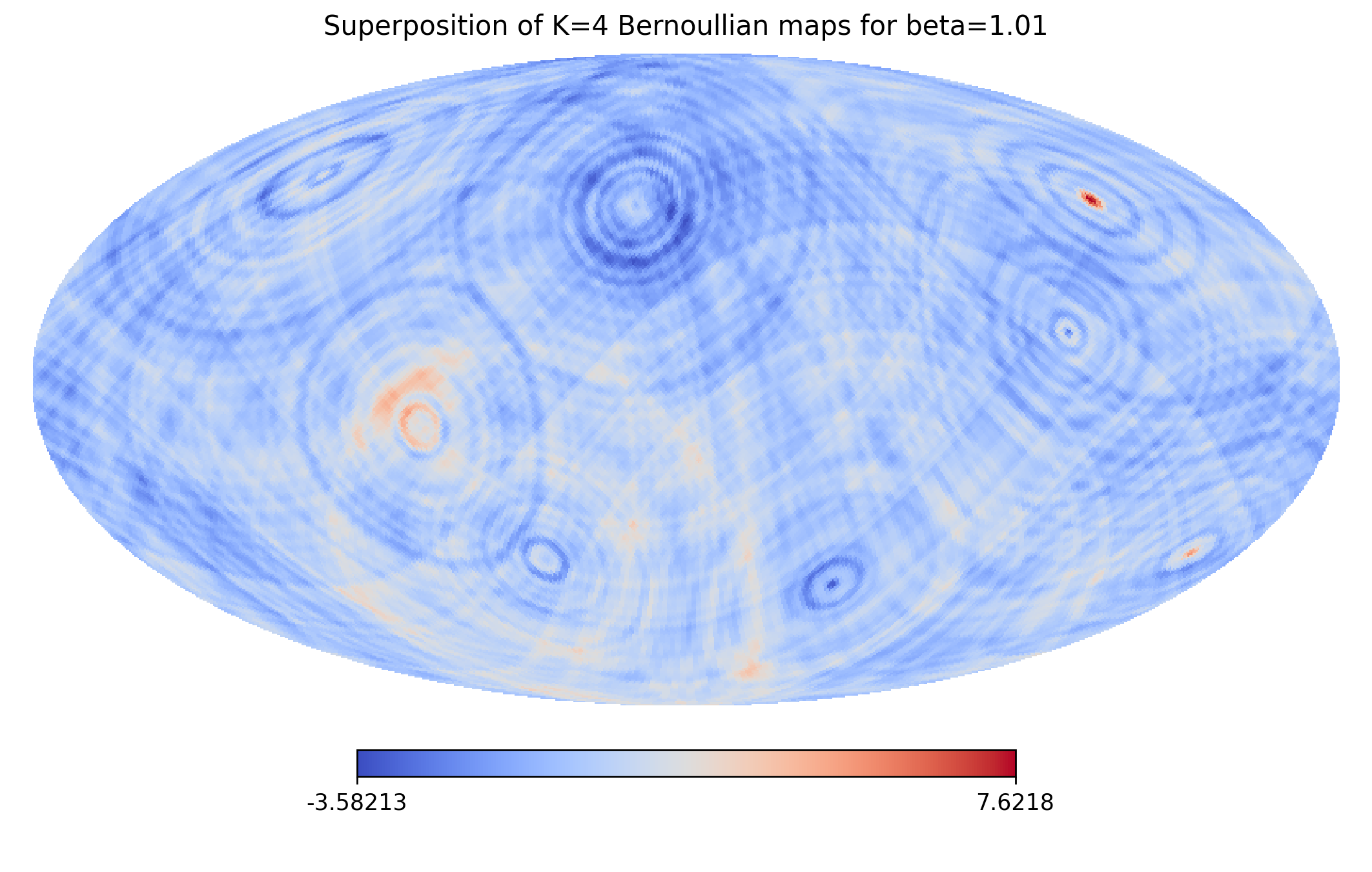}
    \end{subfigure}
        
    % --- riga 2 --- 
     \begin{subfigure}{0.45\textwidth}
        \includegraphics[width=\linewidth]{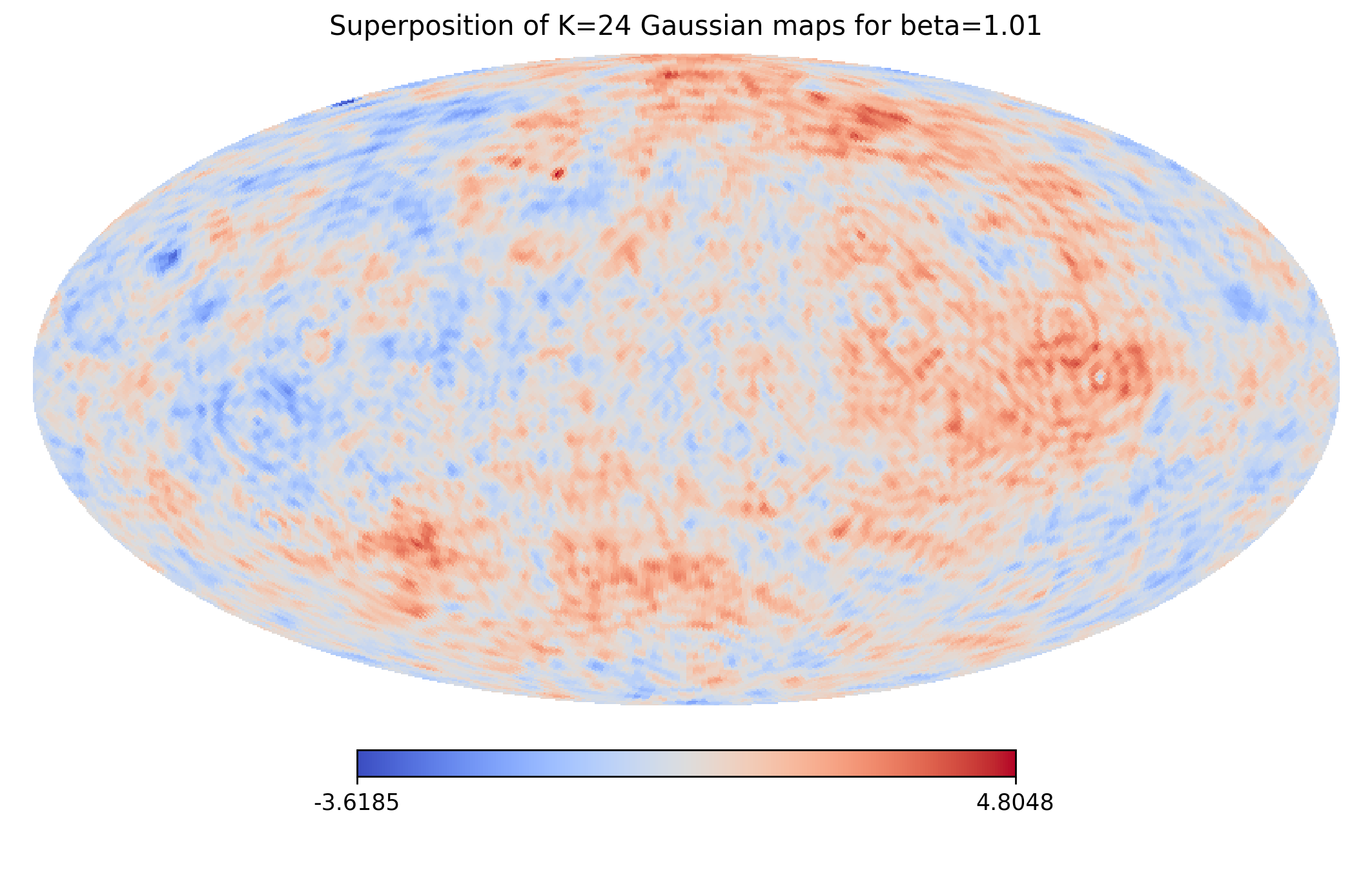}
    \end{subfigure}
    \hfill
    \begin{subfigure}{0.45\textwidth}
        \includegraphics[width=\linewidth]{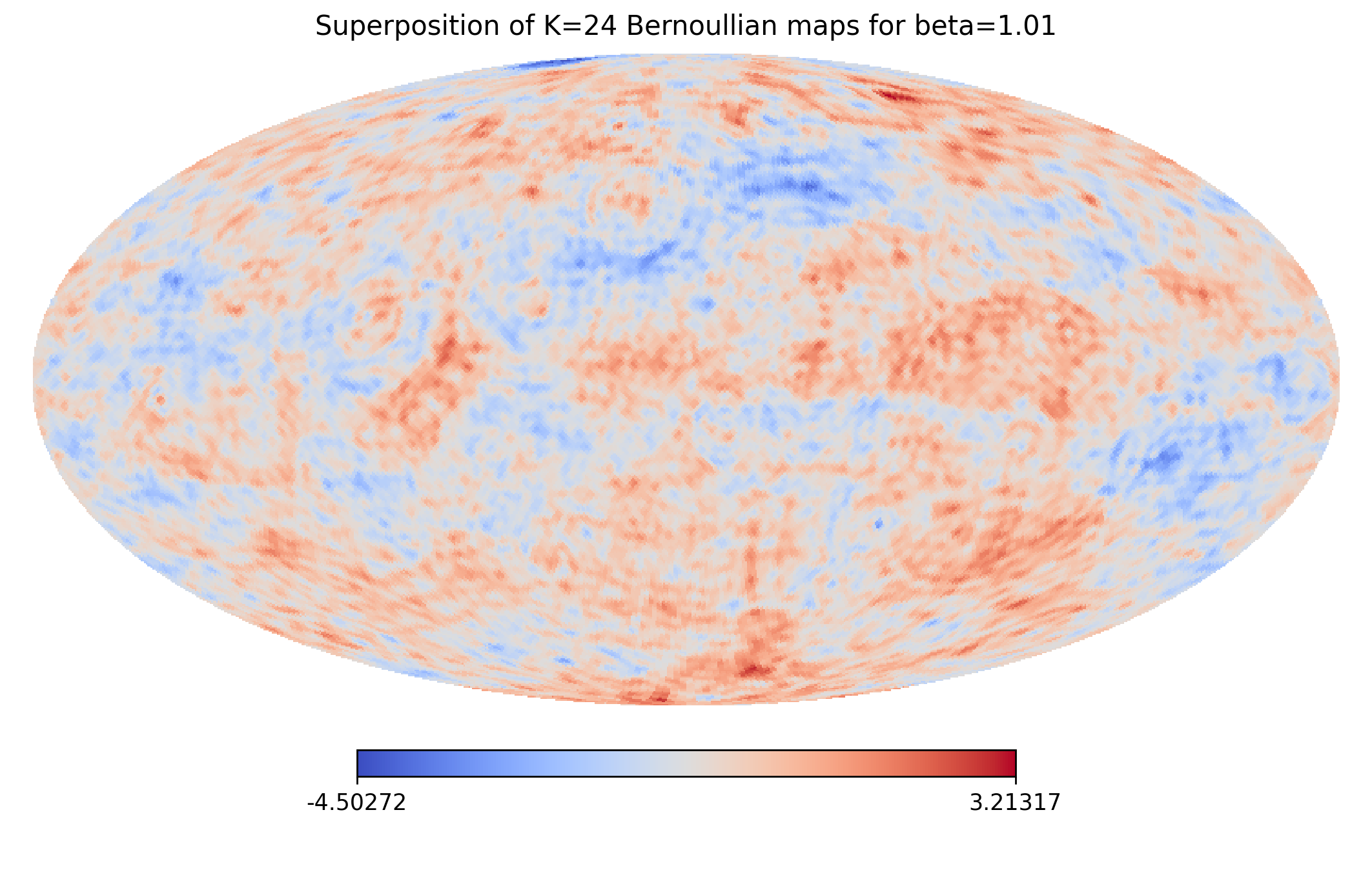}
    \end{subfigure}

    % --- riga 3 --- 
     \begin{subfigure}{0.45\textwidth}
       \includegraphics[width=\linewidth]{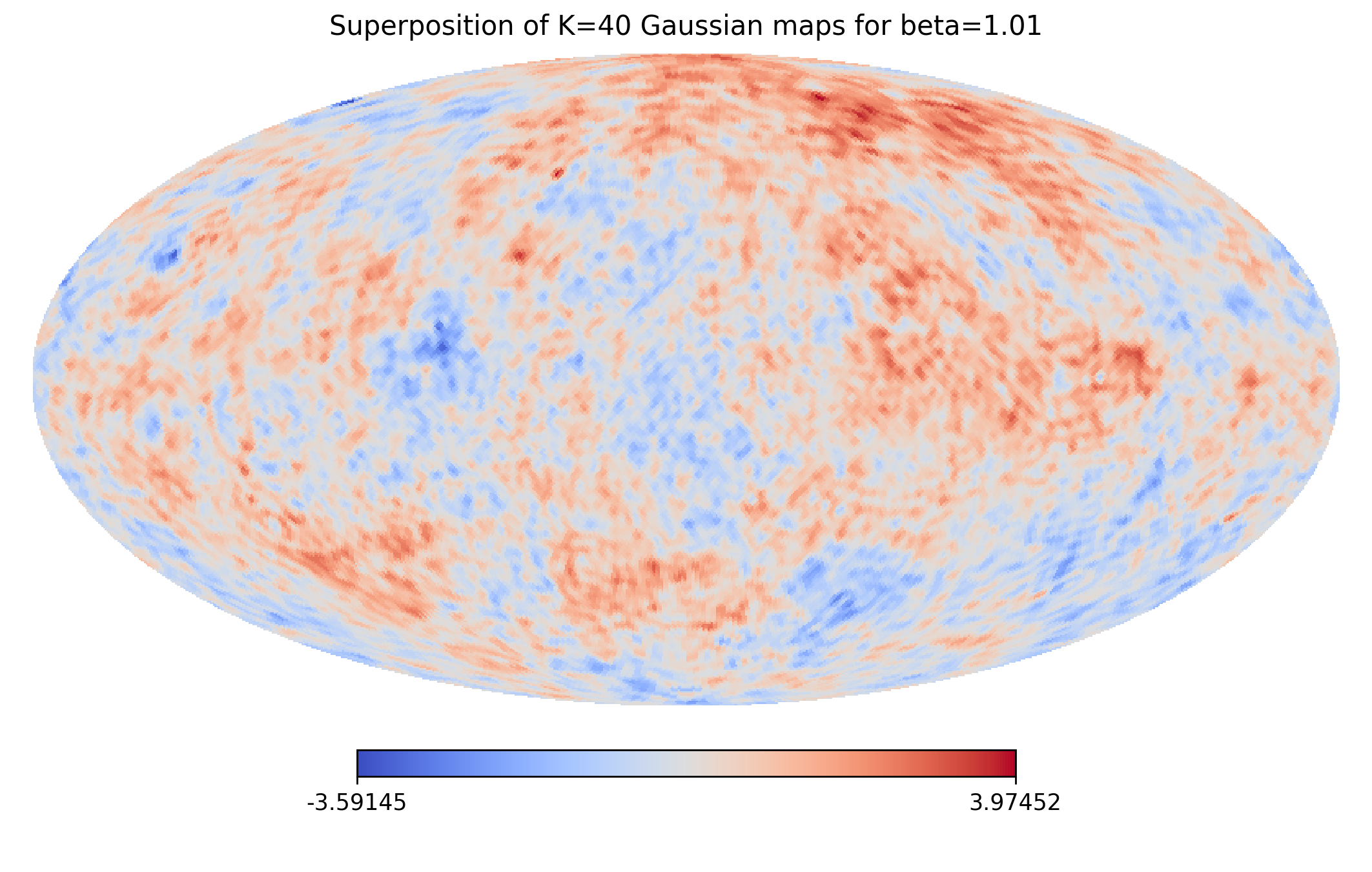}
    \end{subfigure}
    \hfill
    \begin{subfigure}{0.45\textwidth}
        \includegraphics[width=\linewidth]{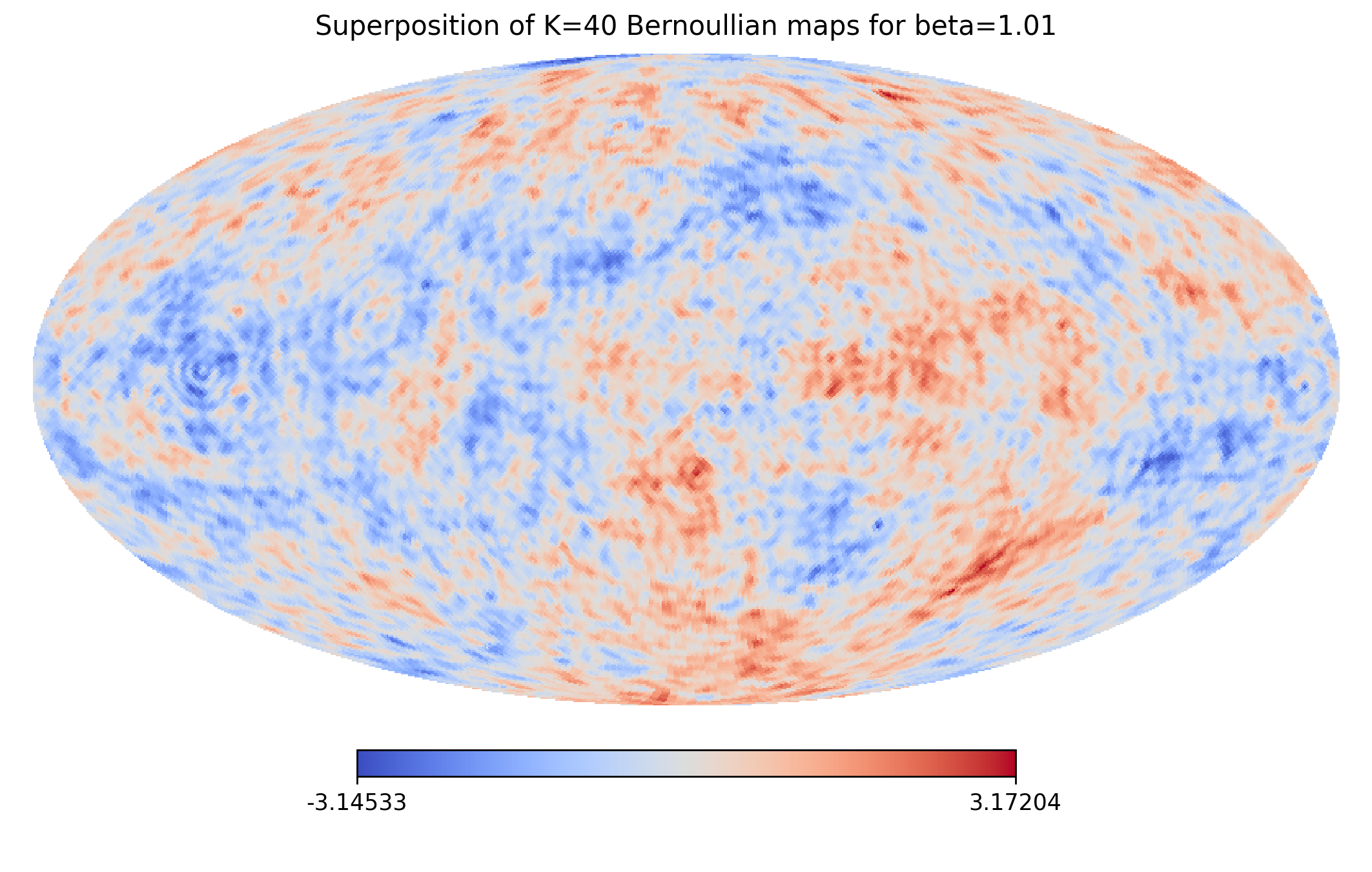}
    \end{subfigure}

     % --- riga 4 --- 
     \begin{subfigure}{0.7\textwidth}
       \includegraphics[width=\linewidth]{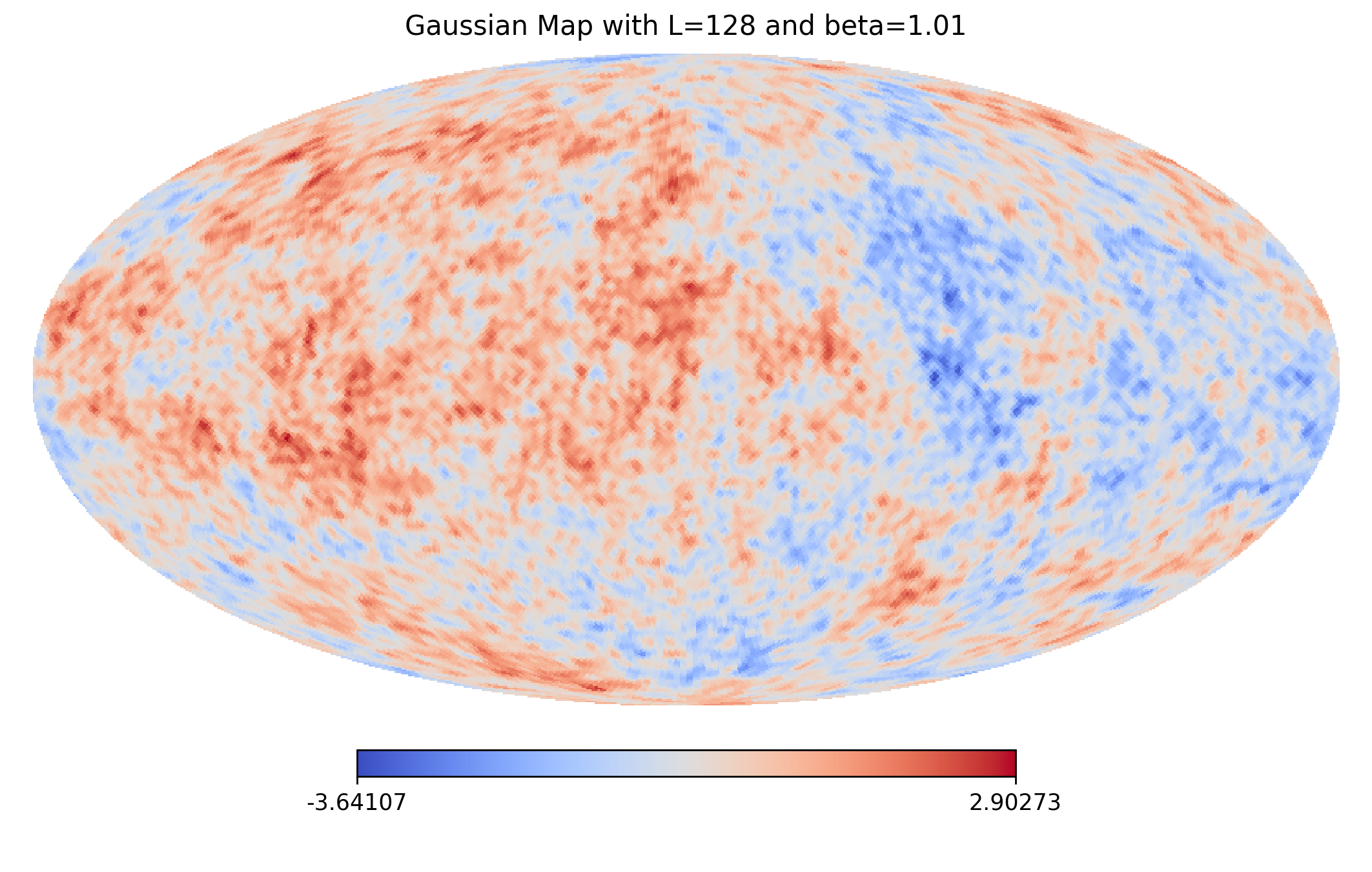}
    \end{subfigure}
    \hfill
    \caption{Simulation of a Whittle-Matérn field with $\beta=1.01$ (bottom image) with resolution $L_{\max}=128$ and $n_{\mathrm{side}}=64$. In the first three rows, we have plotted the sparse random fields written as superposition of $K=4,\,24,\,40$ random waves. On the left column, the random weights $\{\eta_{\ell k}\}$ are taken to be Gaussian random variables with variance $4\pi C_\ell/K$. On the right column, the random weights $\{\eta_{\ell k}\}$ are centered Bernoulli random variables normalized with $4\pi C_\ell/K$.}
    \label{fig:101}
\end{figure}

\begin{figure}[h!]
    \centering
    % --- riga 1 ---
    \begin{subfigure}{0.45\textwidth}
        \includegraphics[width=\linewidth]{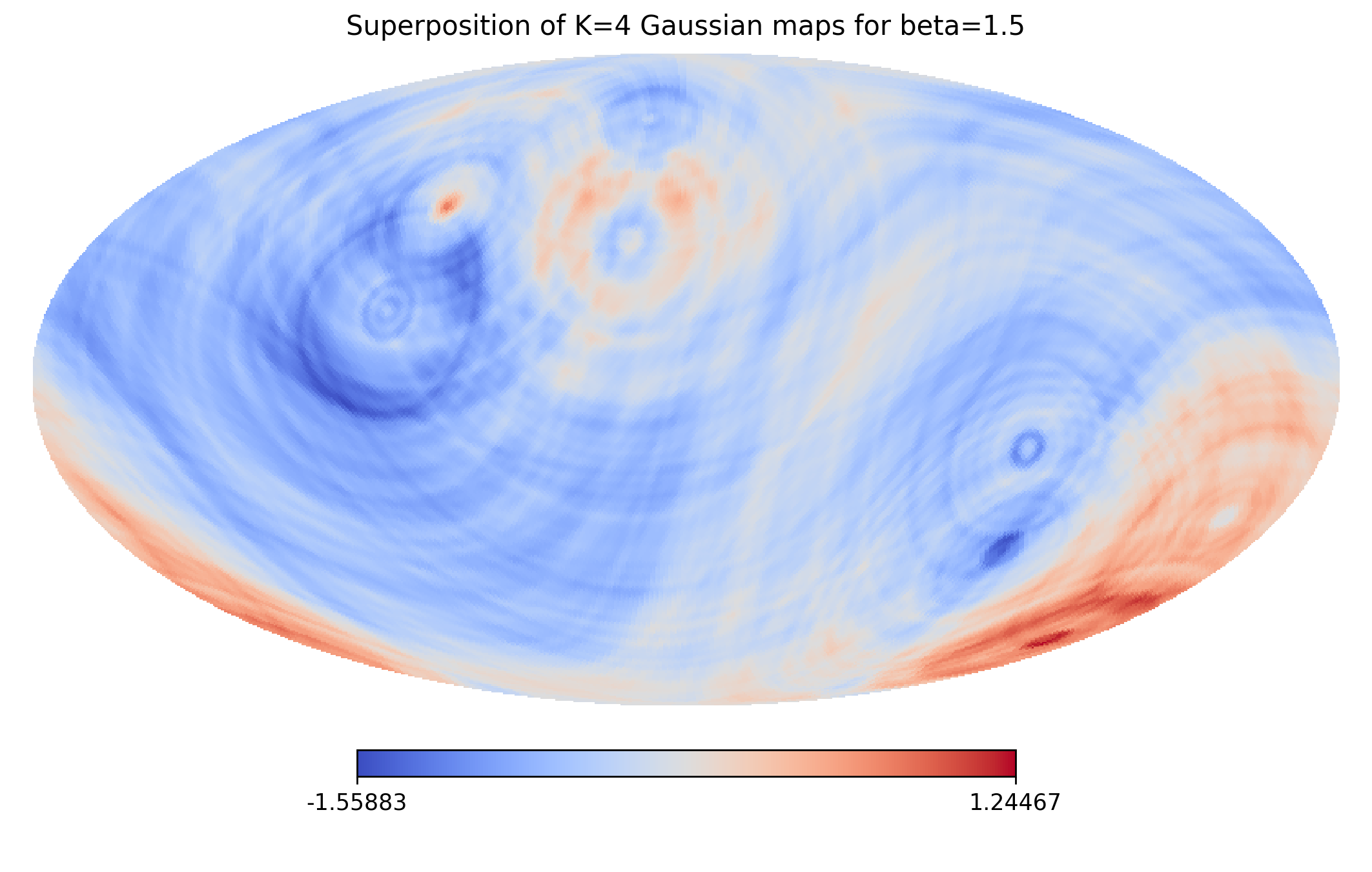}
    \end{subfigure}
        \hfill
    \begin{subfigure}{0.45\textwidth}
        \includegraphics[width=\linewidth]{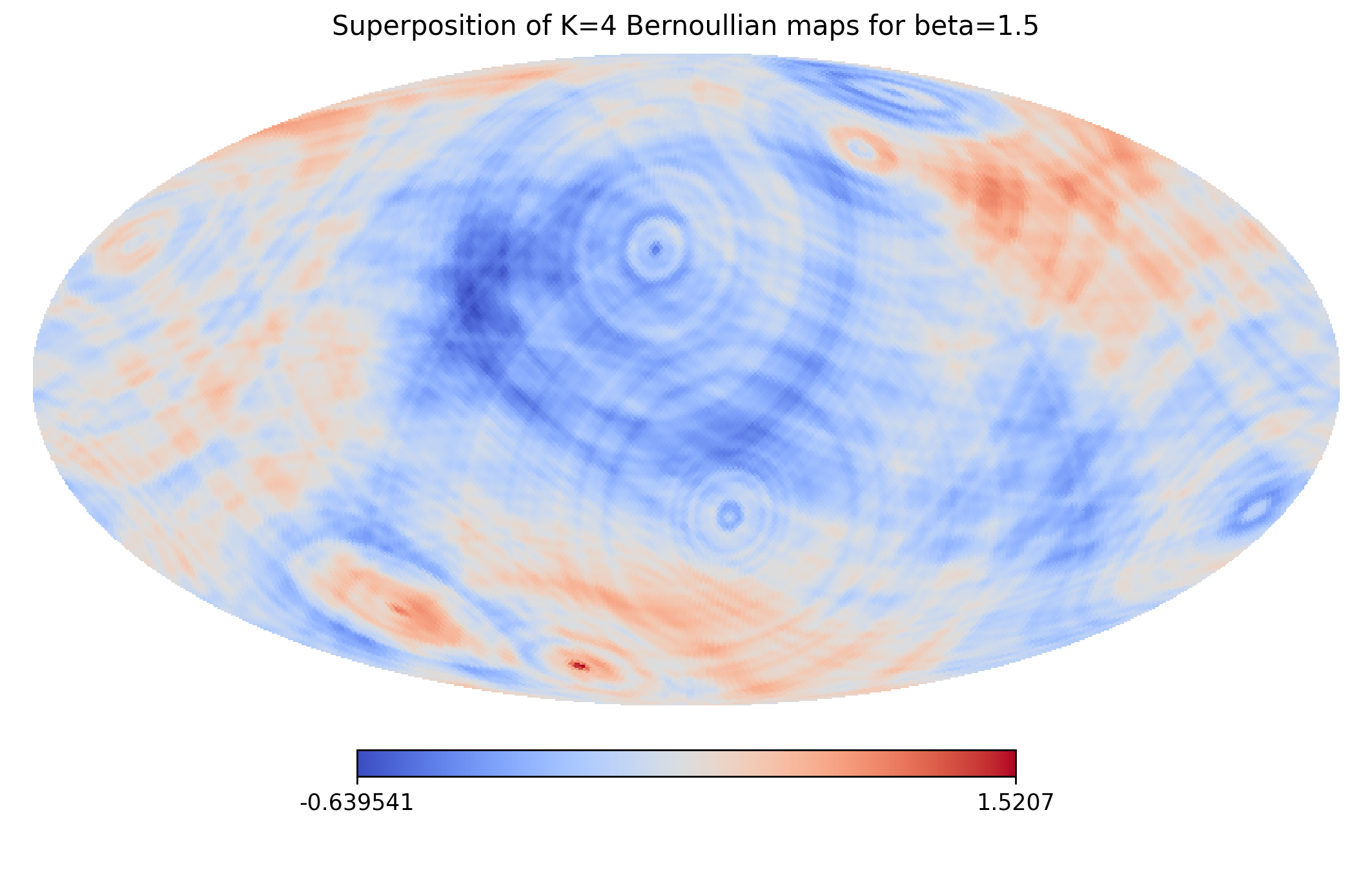}
    \end{subfigure}
        
    % --- riga 2 --- 
     \begin{subfigure}{0.45\textwidth}
        \includegraphics[width=\linewidth]{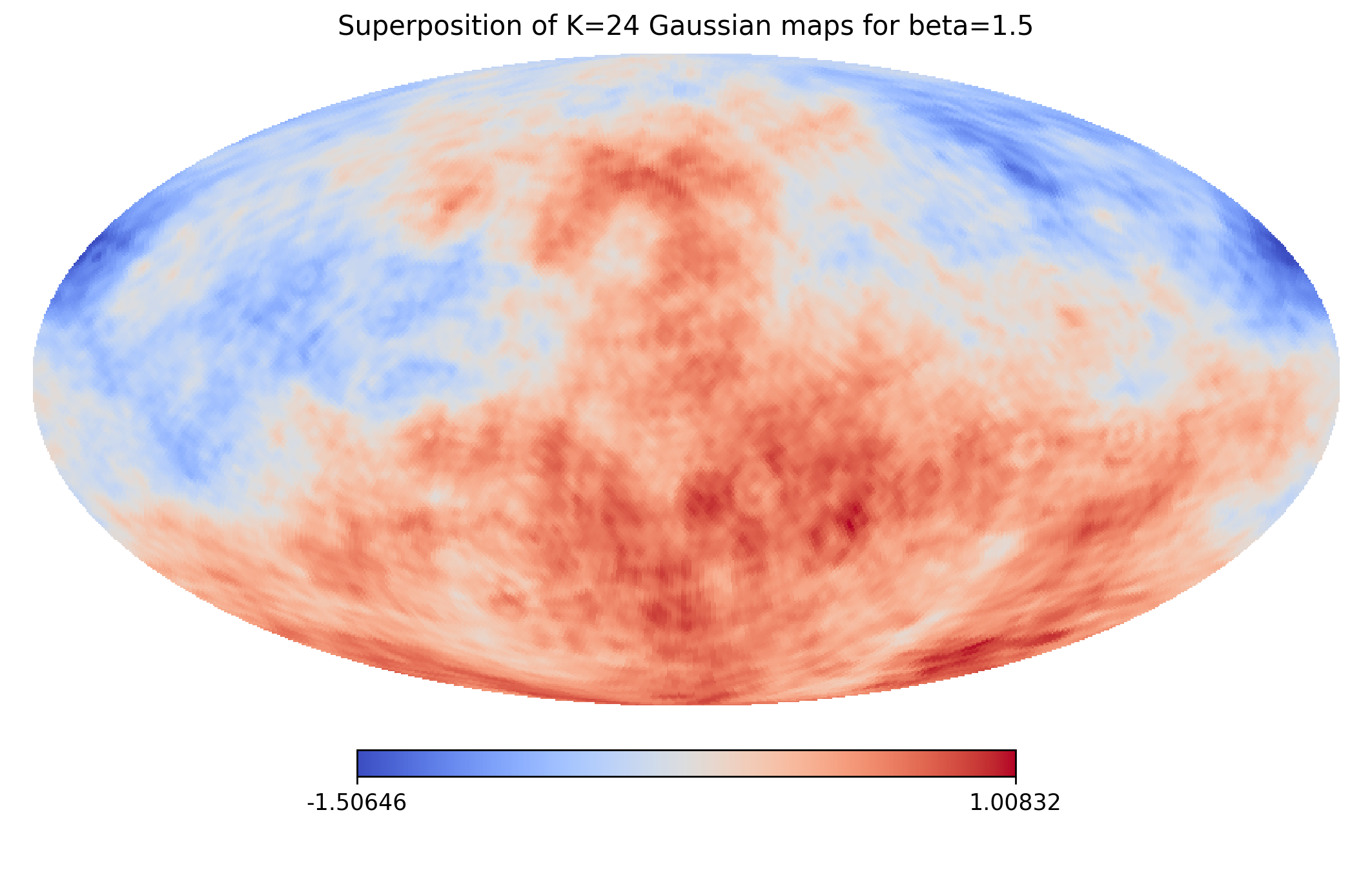}
    \end{subfigure}
    \hfill
    \begin{subfigure}{0.45\textwidth}
        \includegraphics[width=\linewidth]{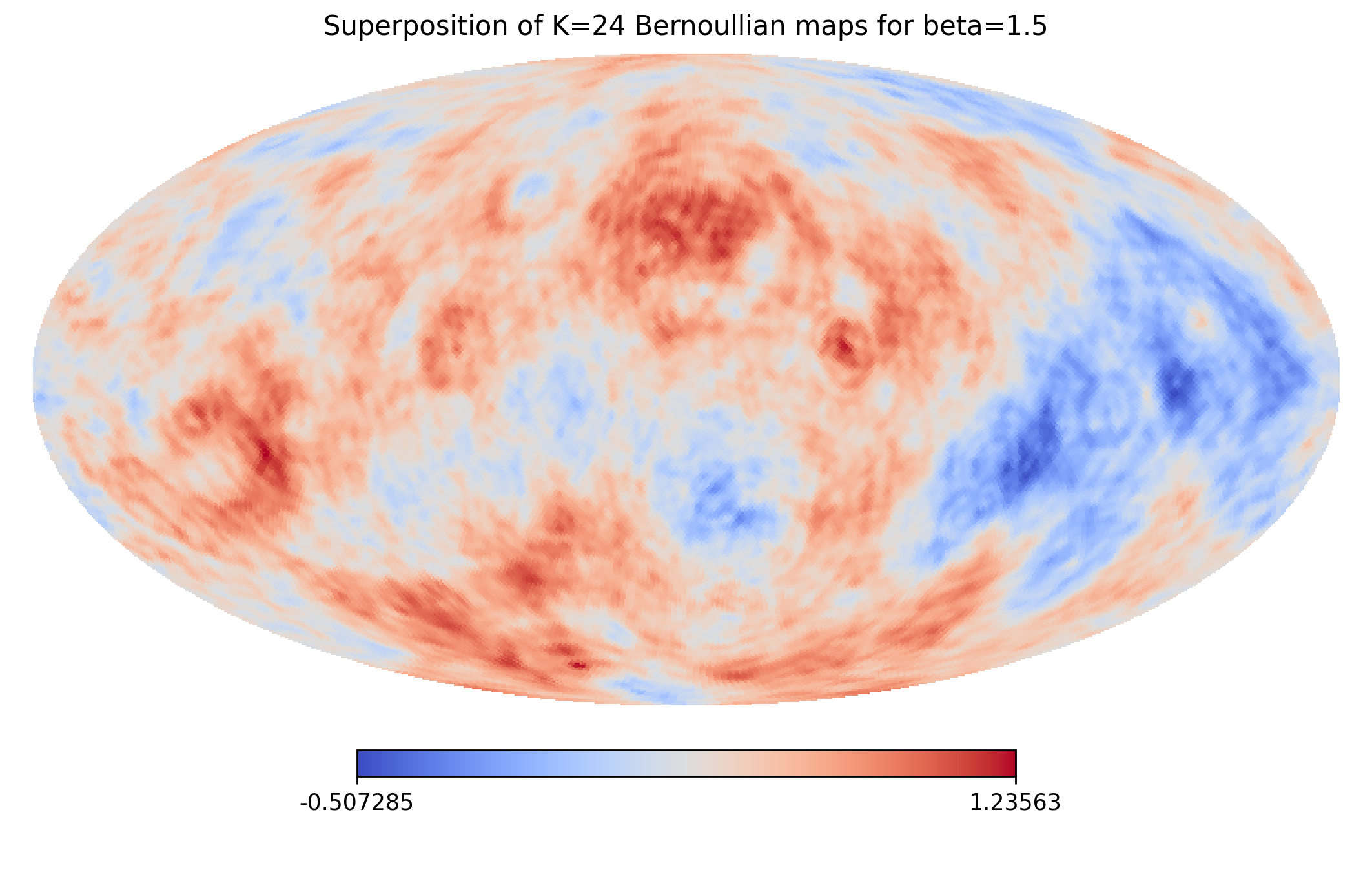}
    \end{subfigure}

    % --- riga 3 --- 
     \begin{subfigure}{0.45\textwidth}
       \includegraphics[width=\linewidth]{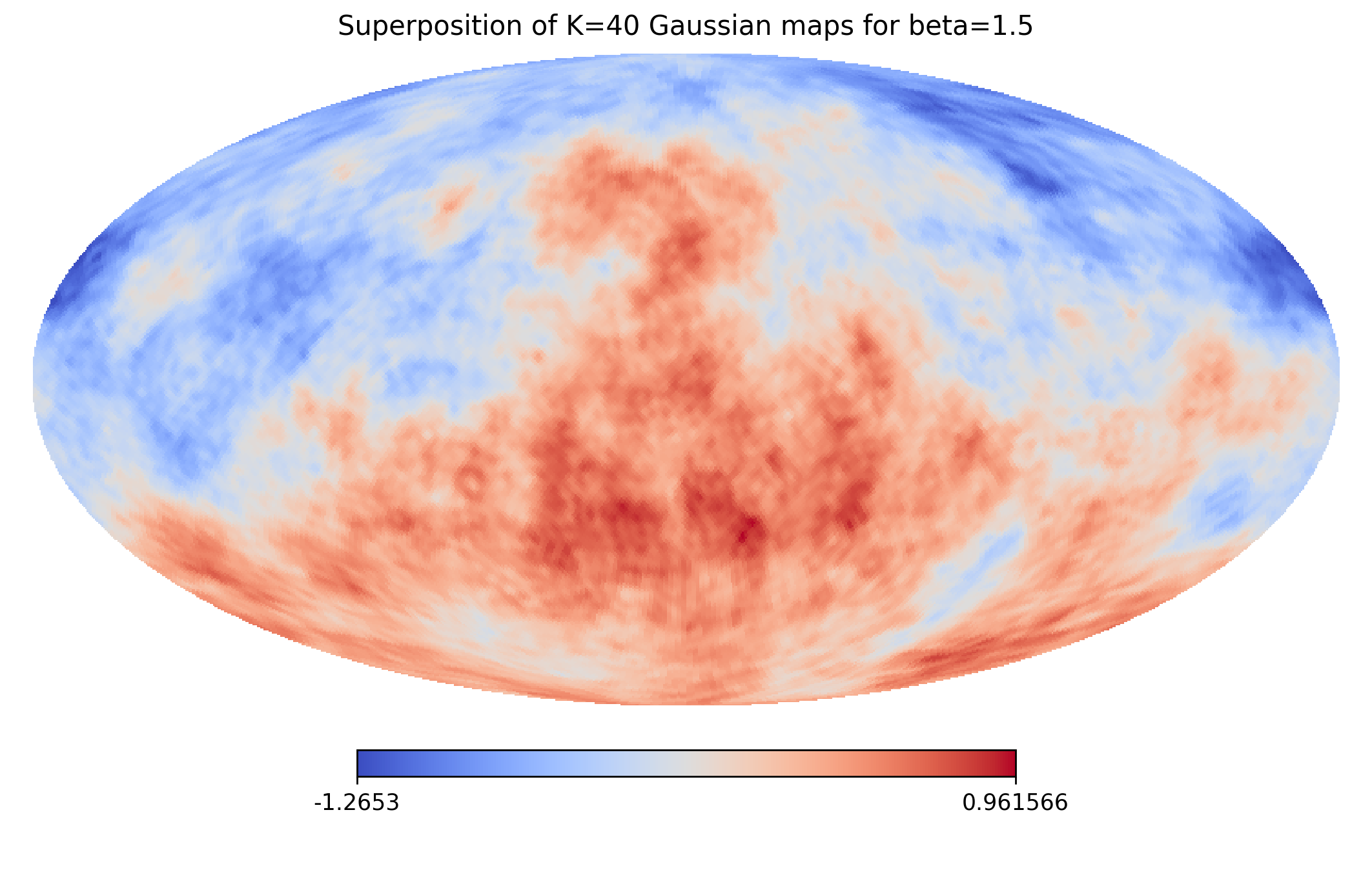}
    \end{subfigure}
    \hfill
    \begin{subfigure}{0.45\textwidth}
        \includegraphics[width=\linewidth]{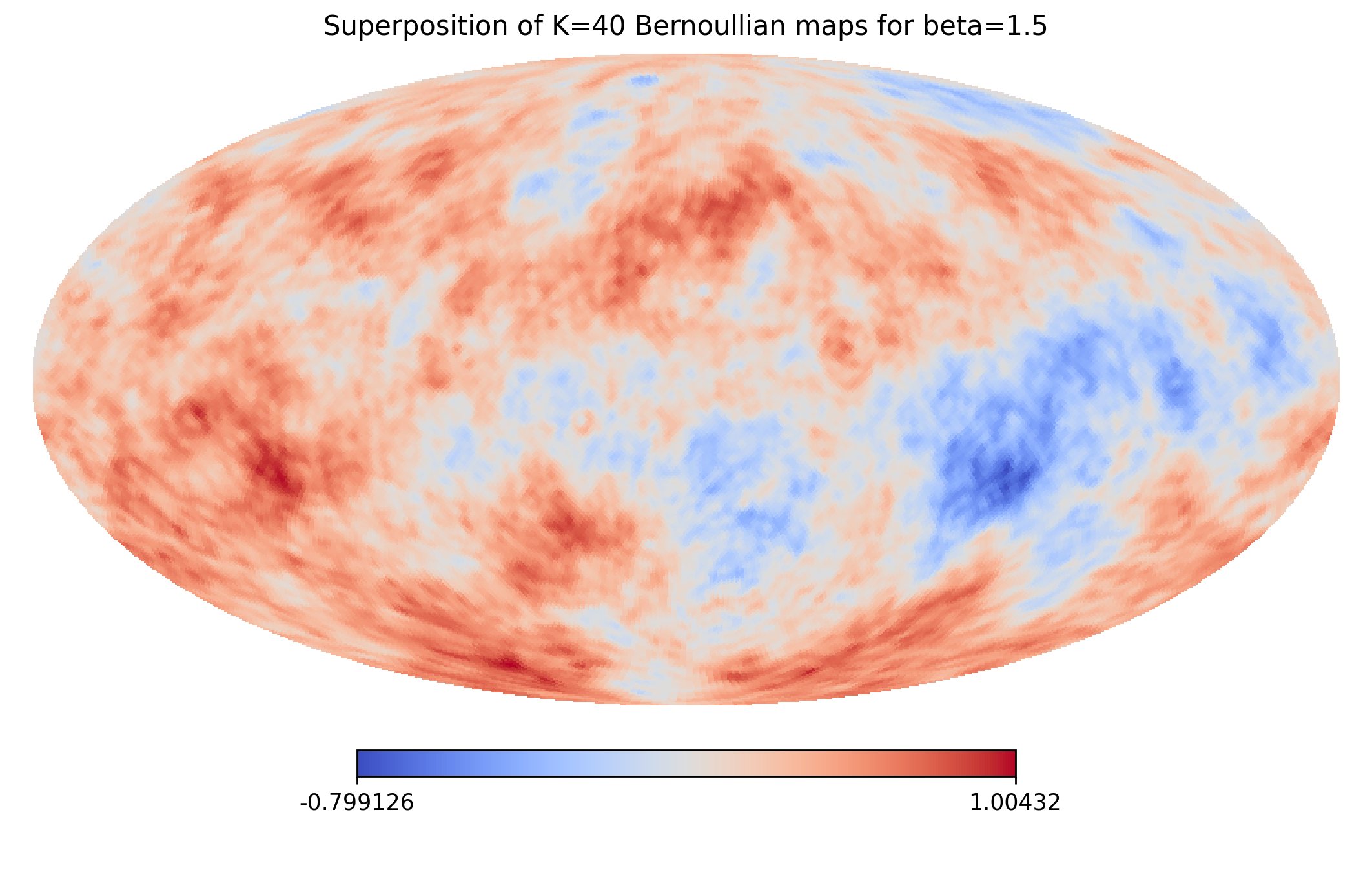}
    \end{subfigure}

     % --- riga 4 --- 
     \begin{subfigure}{0.7\textwidth}
       \includegraphics[width=\linewidth]{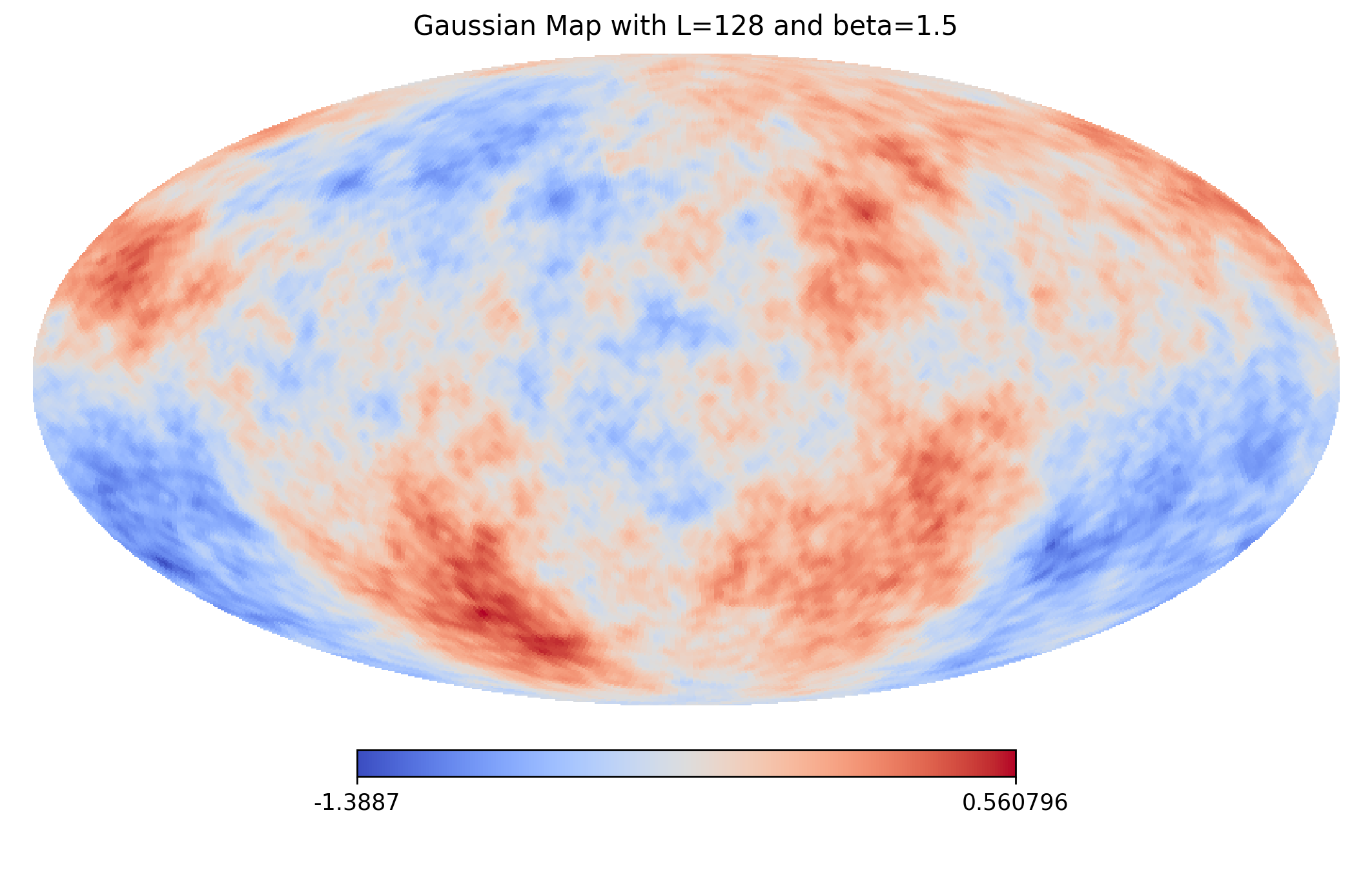}
    \end{subfigure}
    \hfill

     \caption{Simulation of a Whittle-Matérn field with $\beta=1.5$ (bottom image) with resolution $L_{\max}=128$ and $n_{\mathrm{side}}=64$. In the first three rows, we have plotted the sparse random fields written as superposition of $K=4,\,24,\,40$ random waves. On the left column, the random weights $\{\eta_{\ell k}\}$ are taken to be Gaussian random variables with variance $4\pi C_\ell/K$. On the right column, the random weights $\{\eta_{\ell k}\}$ are centered Bernoulli random variables normalized with $4\pi C_\ell/K$.}
    \label{fig:150}
\end{figure}

\begin{figure}[h!]
    \centering
    % --- riga 1 ---
    \begin{subfigure}{0.45\textwidth}
        \includegraphics[width=\linewidth]{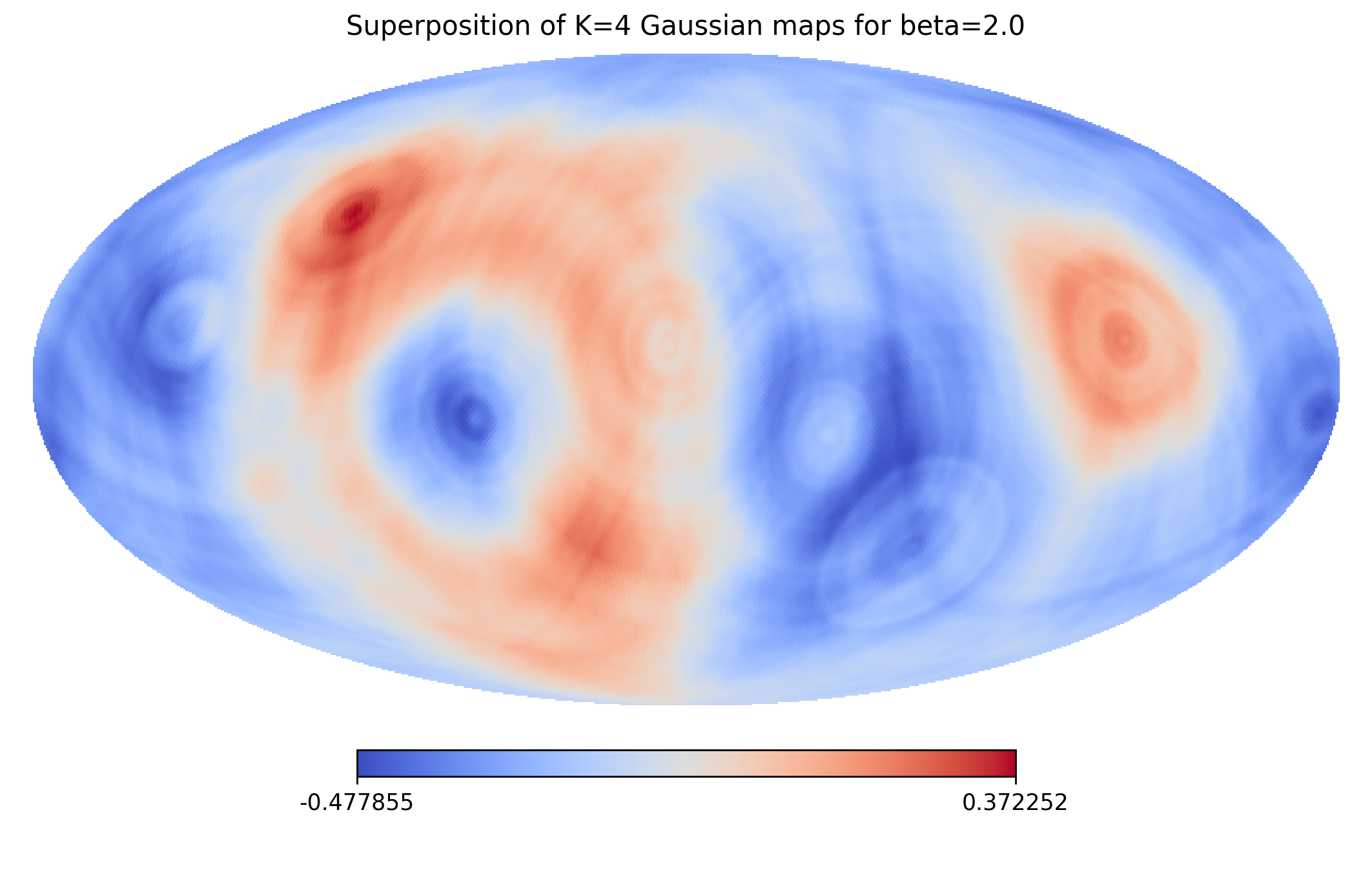}
    \end{subfigure}
        \hfill
    \begin{subfigure}{0.45\textwidth}
        \includegraphics[width=\linewidth]{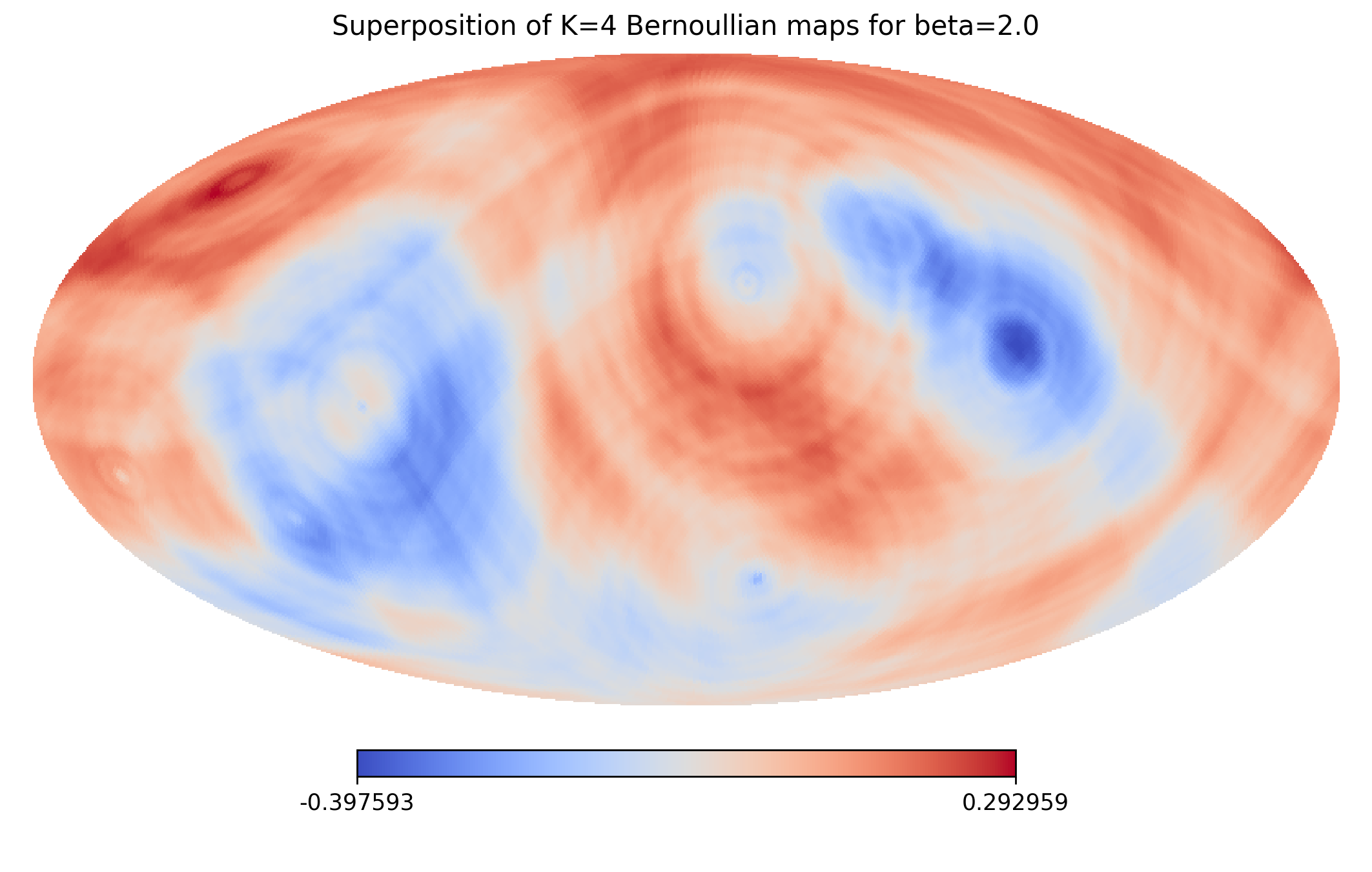}
    \end{subfigure}
        
    % --- riga 2 --- 
     \begin{subfigure}{0.45\textwidth}
        \includegraphics[width=\linewidth]{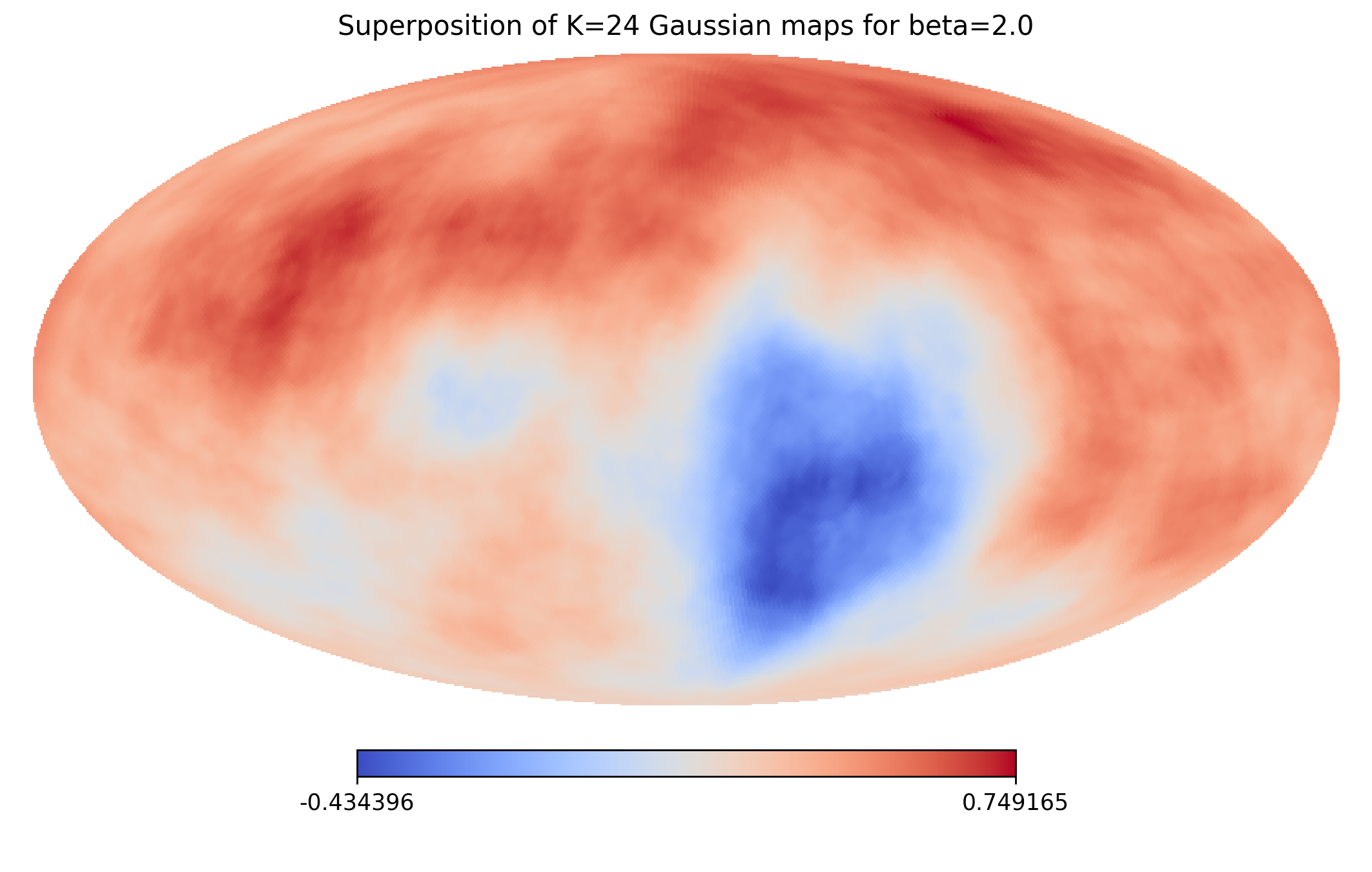}
    \end{subfigure}
    \hfill
    \begin{subfigure}{0.45\textwidth}
        \includegraphics[width=\linewidth]{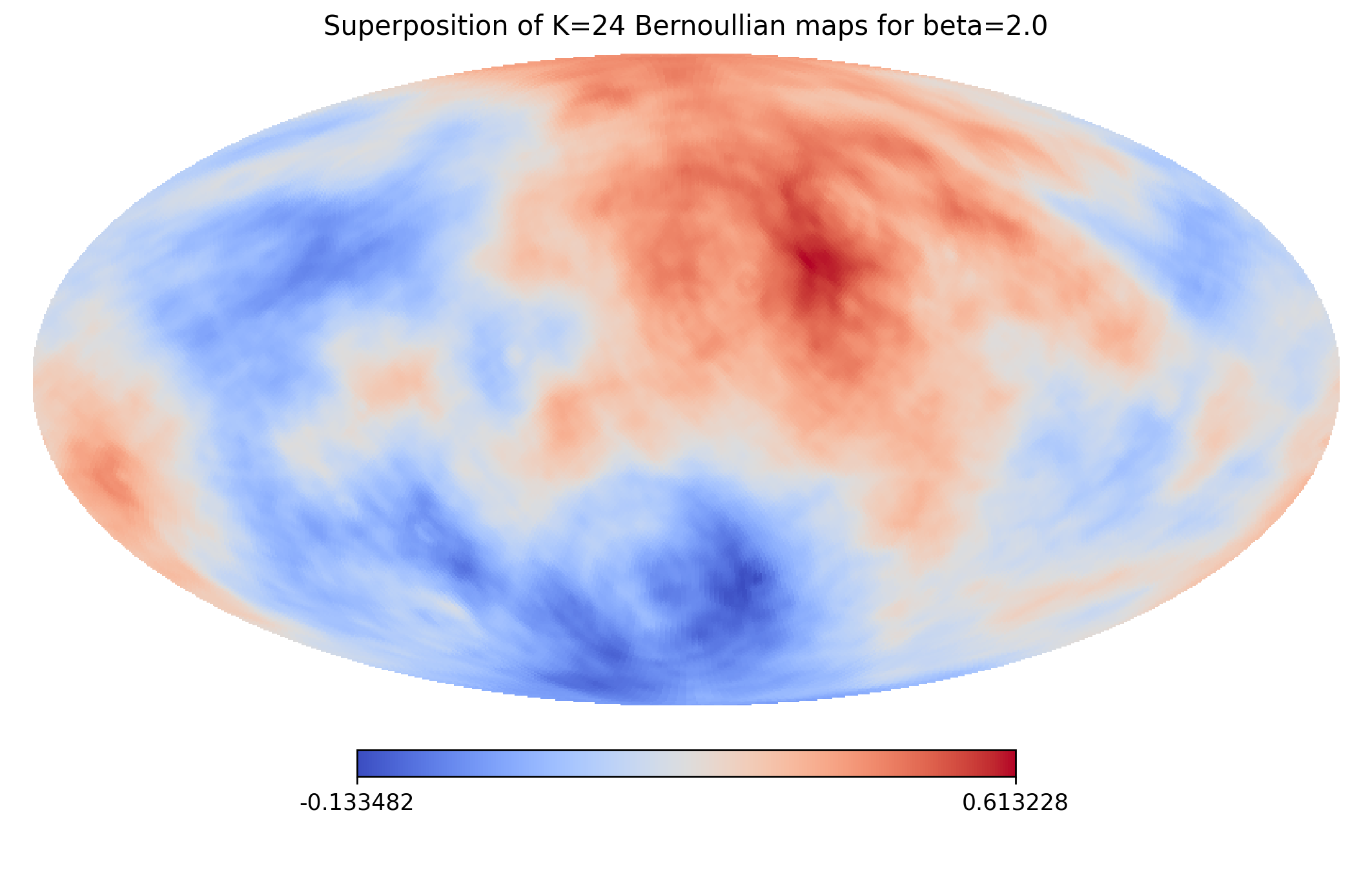}
    \end{subfigure}

    % --- riga 3 --- 
     \begin{subfigure}{0.45\textwidth}
       \includegraphics[width=\linewidth]{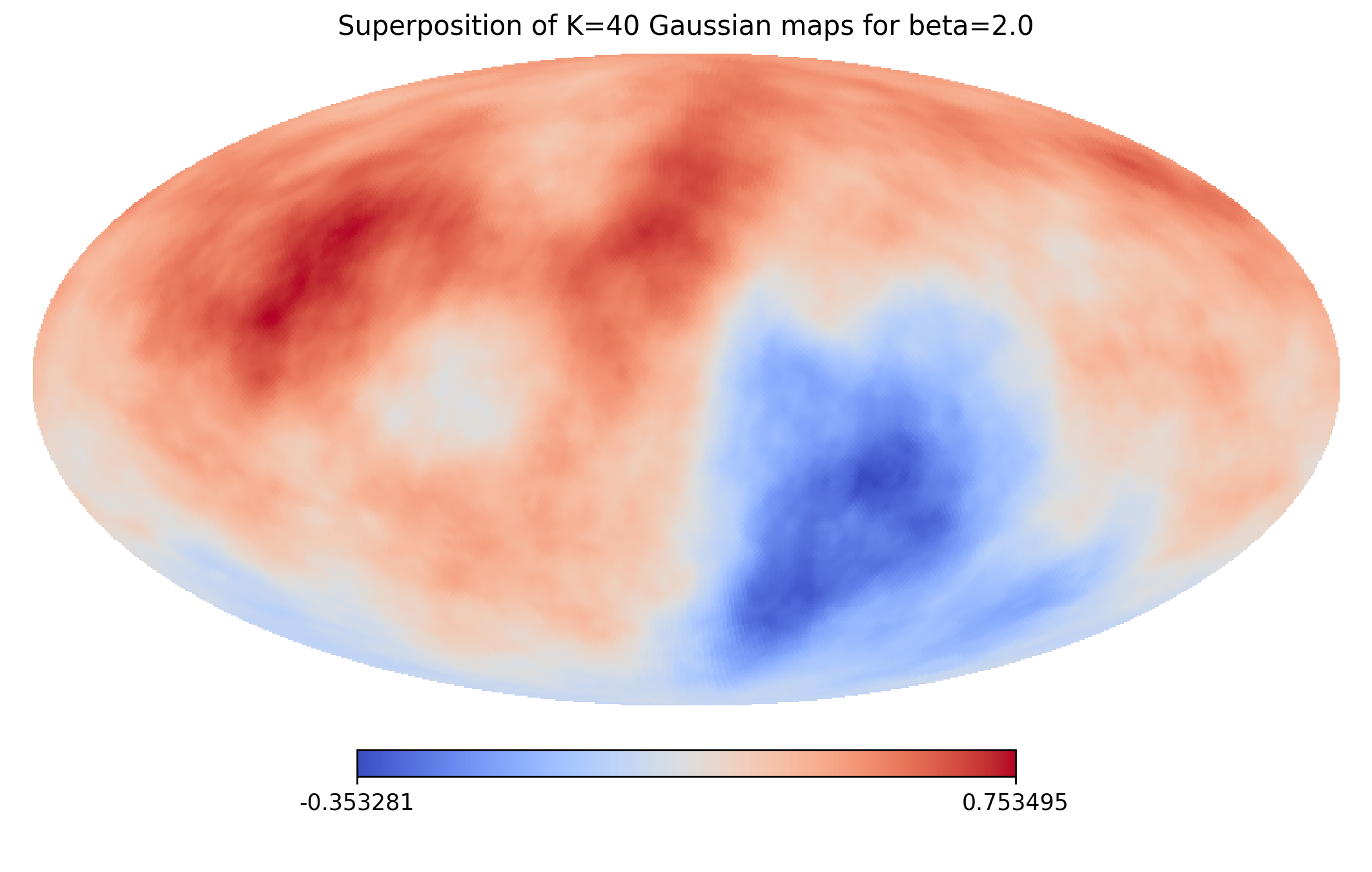}
    \end{subfigure}
    \hfill
    \begin{subfigure}{0.45\textwidth}
        \includegraphics[width=\linewidth]{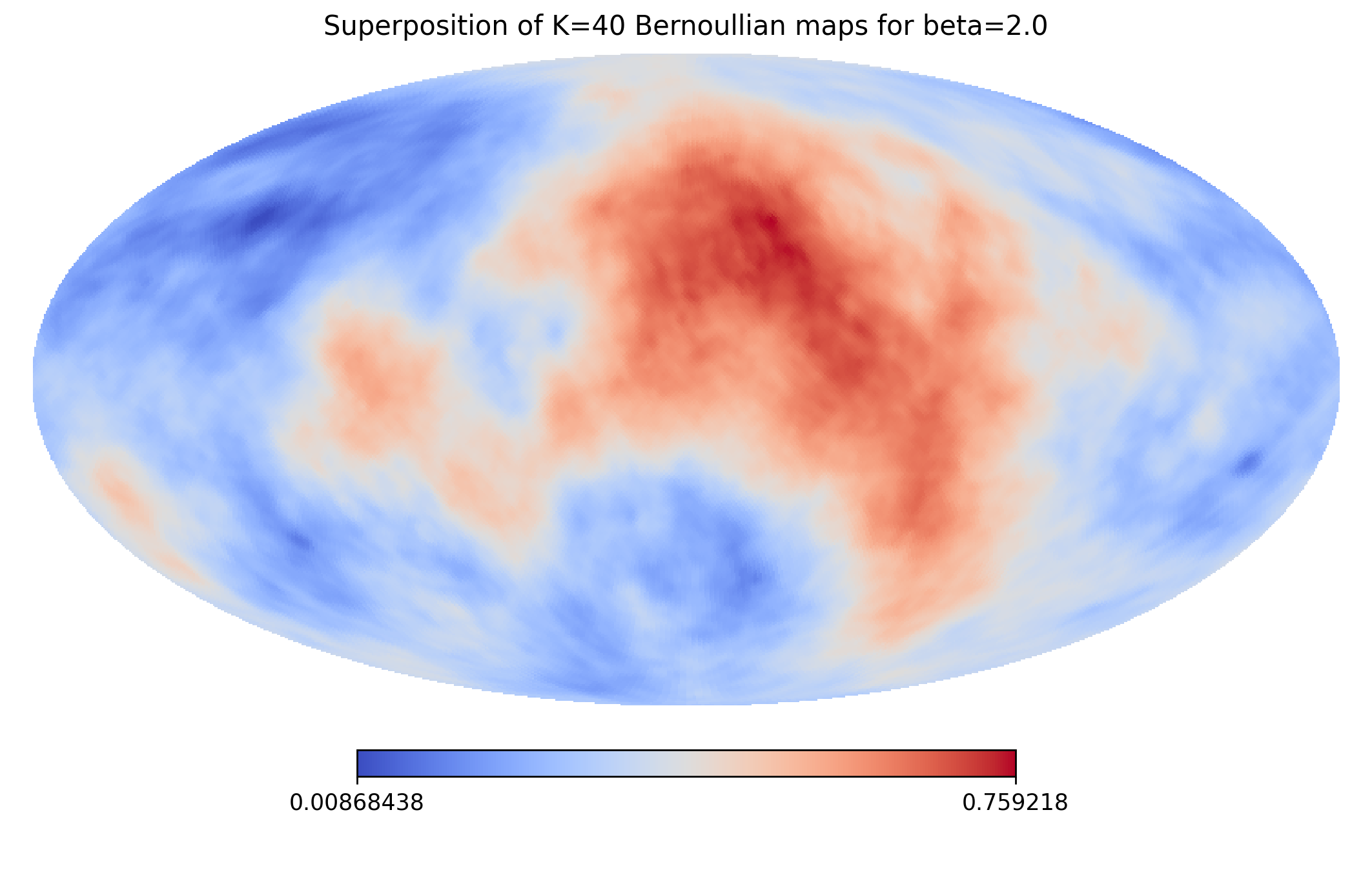}
    \end{subfigure}

     % --- riga 4 --- 
     \begin{subfigure}{0.7\textwidth}
       \includegraphics[width=\linewidth]{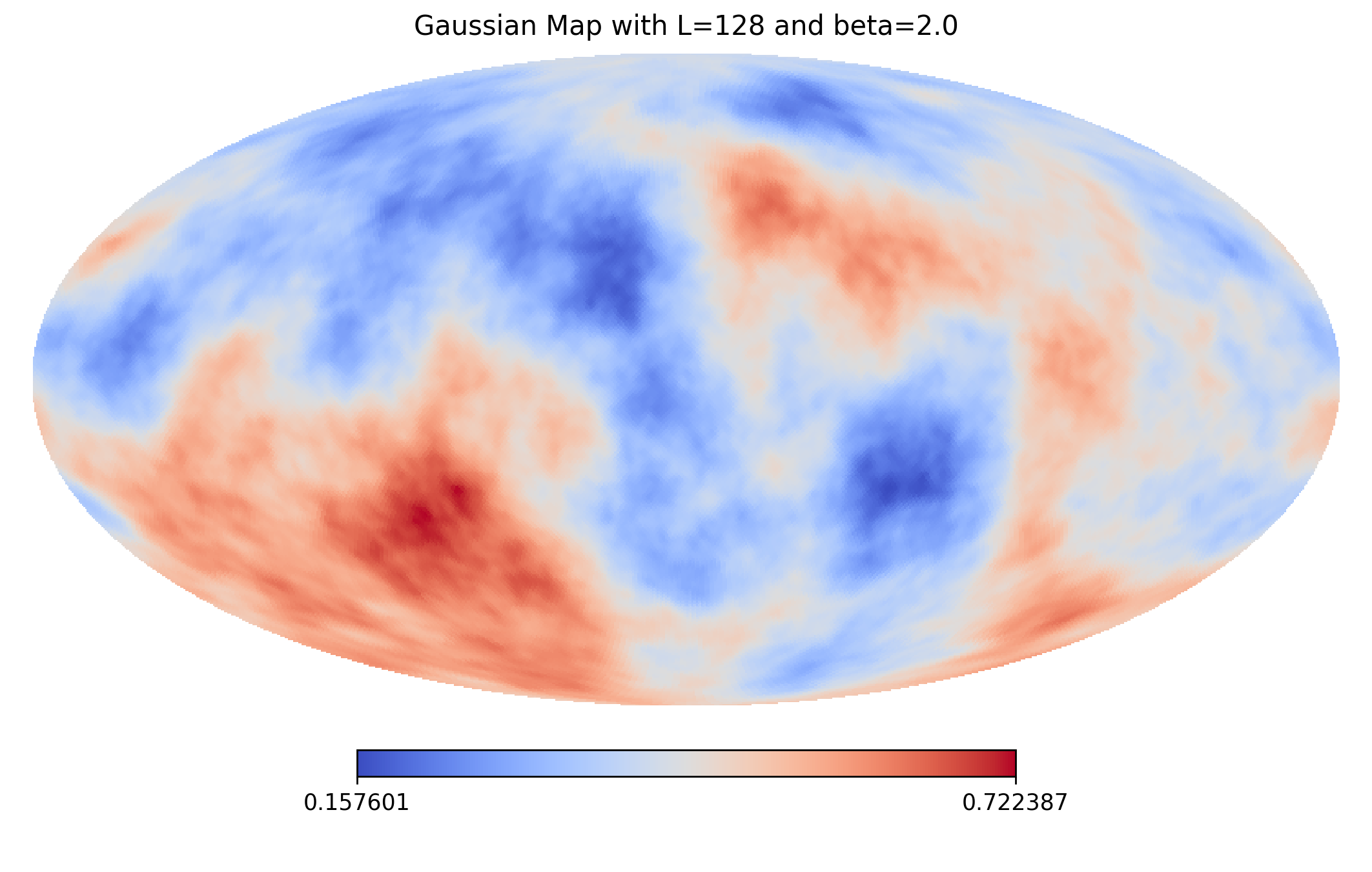}
    \end{subfigure}
    \hfill

      \caption{Simulation of a Whittle-Matérn field with $\beta=2$ (bottom image) with resolution $L_{\max}=128$ and $n_{\mathrm{side}}=64$. In the first three rows, we have plotted the sparse random fields written as superposition of $K=4,\,24,\,40$ random waves. On the left column, the random weights $\{\eta_{\ell k}\}$ are taken to be Gaussian random variables with variance $4\pi C_\ell/K$. On the right column, the random weights $\{\eta_{\ell k}\}$ are centered Bernoulli random variables normalized with $4\pi C_\ell/K$.}
    \label{fig:200}
\end{figure}

\begin{figure}[h!]
    \centering
    % --- riga 1 ---
    \begin{subfigure}{0.45\textwidth}
        \includegraphics[width=\linewidth]{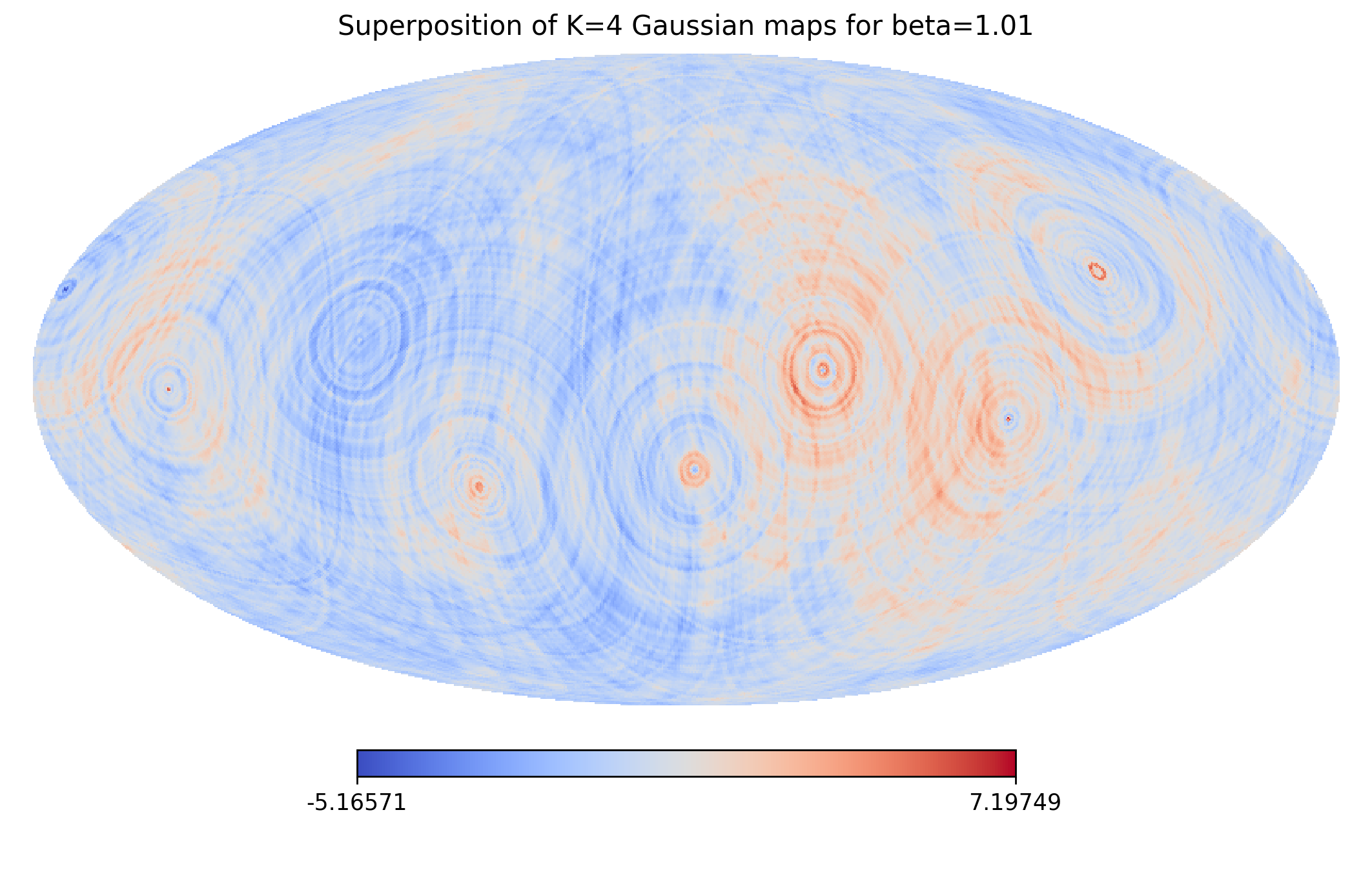}
    \end{subfigure}
        \hfill
    \begin{subfigure}{0.45\textwidth}
        \includegraphics[width=\linewidth]{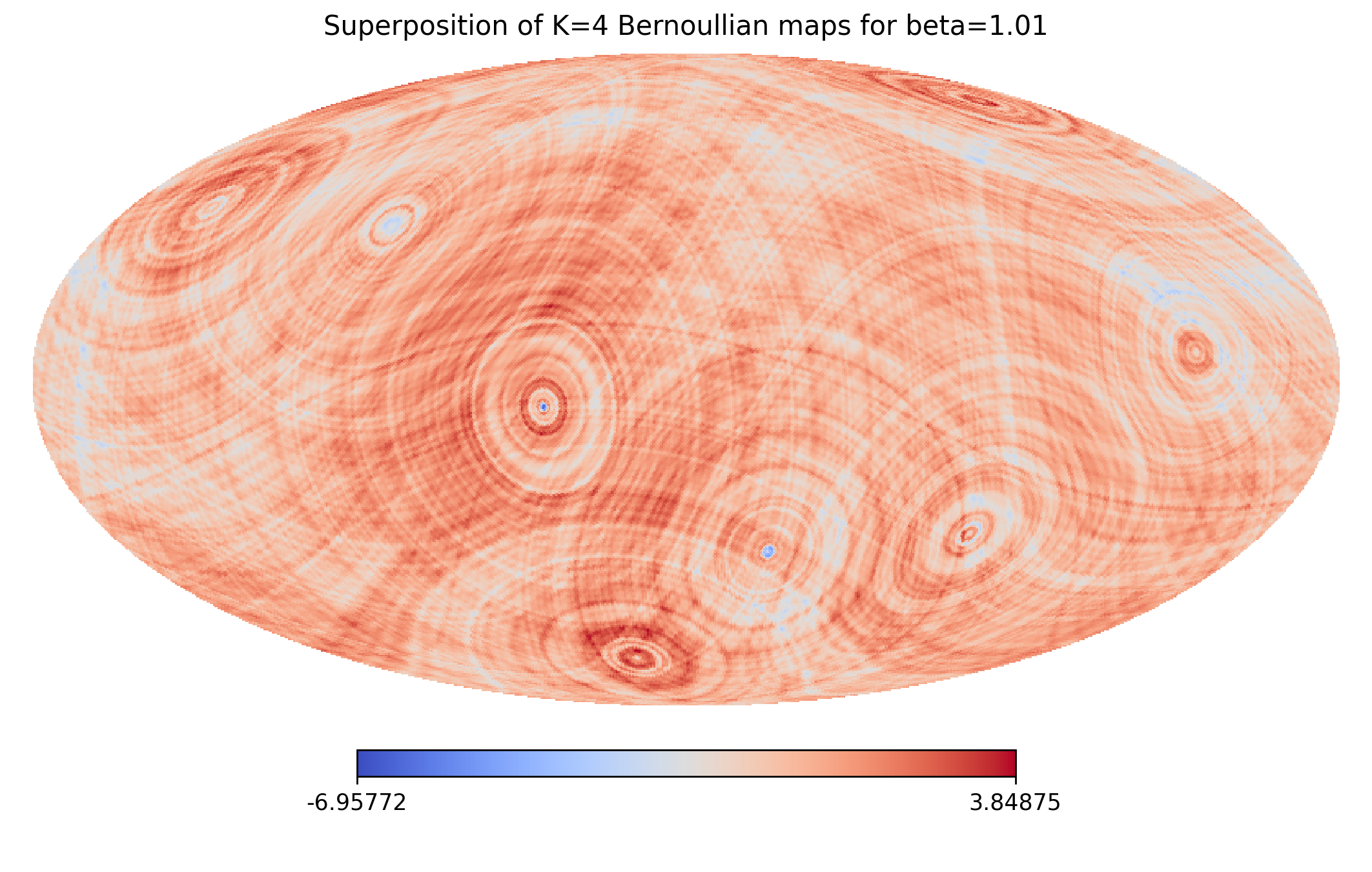}
    \end{subfigure}
        
    % --- riga 2 --- 
     \begin{subfigure}{0.45\textwidth}
        \includegraphics[width=\linewidth]{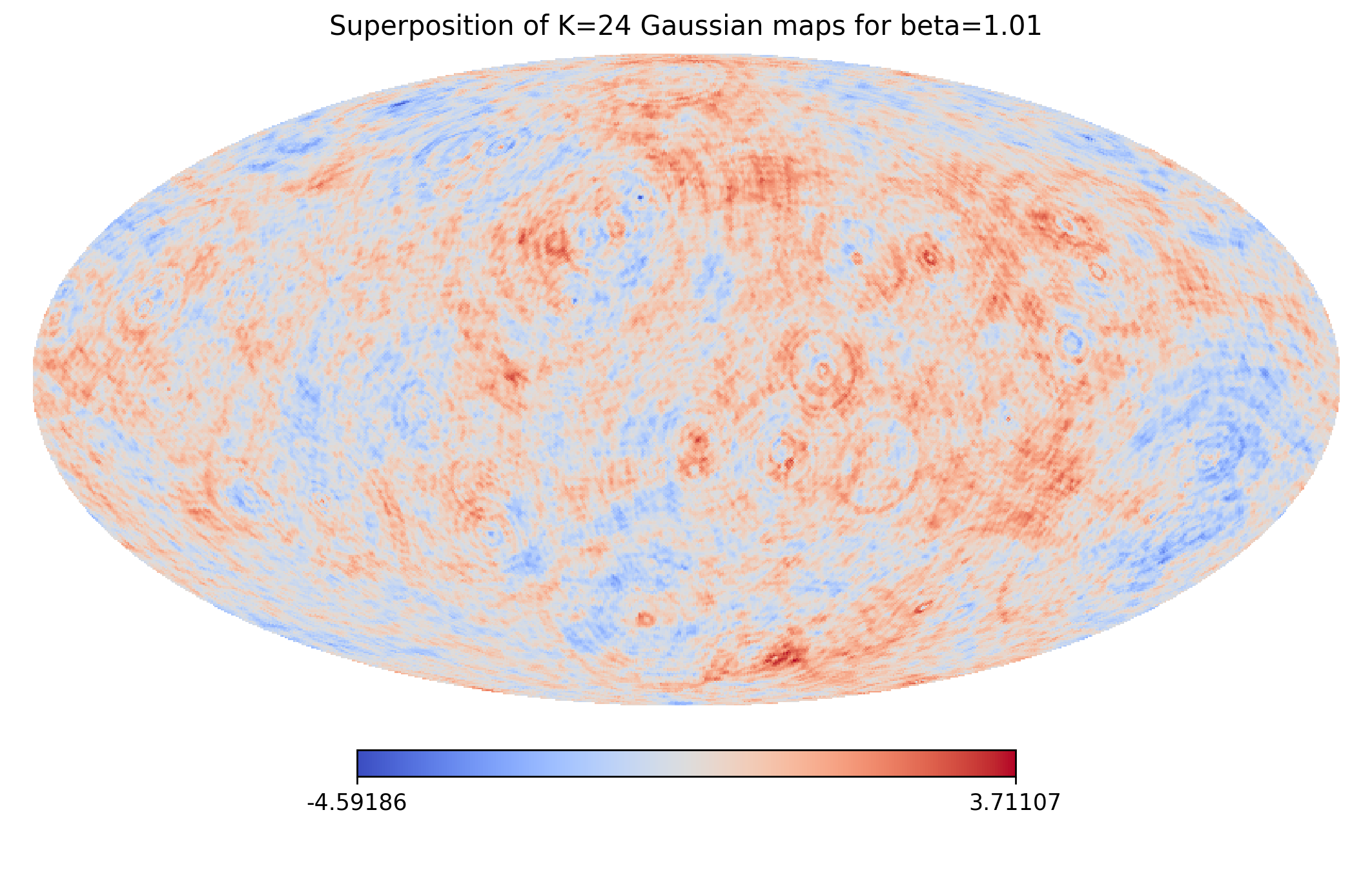}
    \end{subfigure}
    \hfill
    \begin{subfigure}{0.45\textwidth}
        \includegraphics[width=\linewidth]{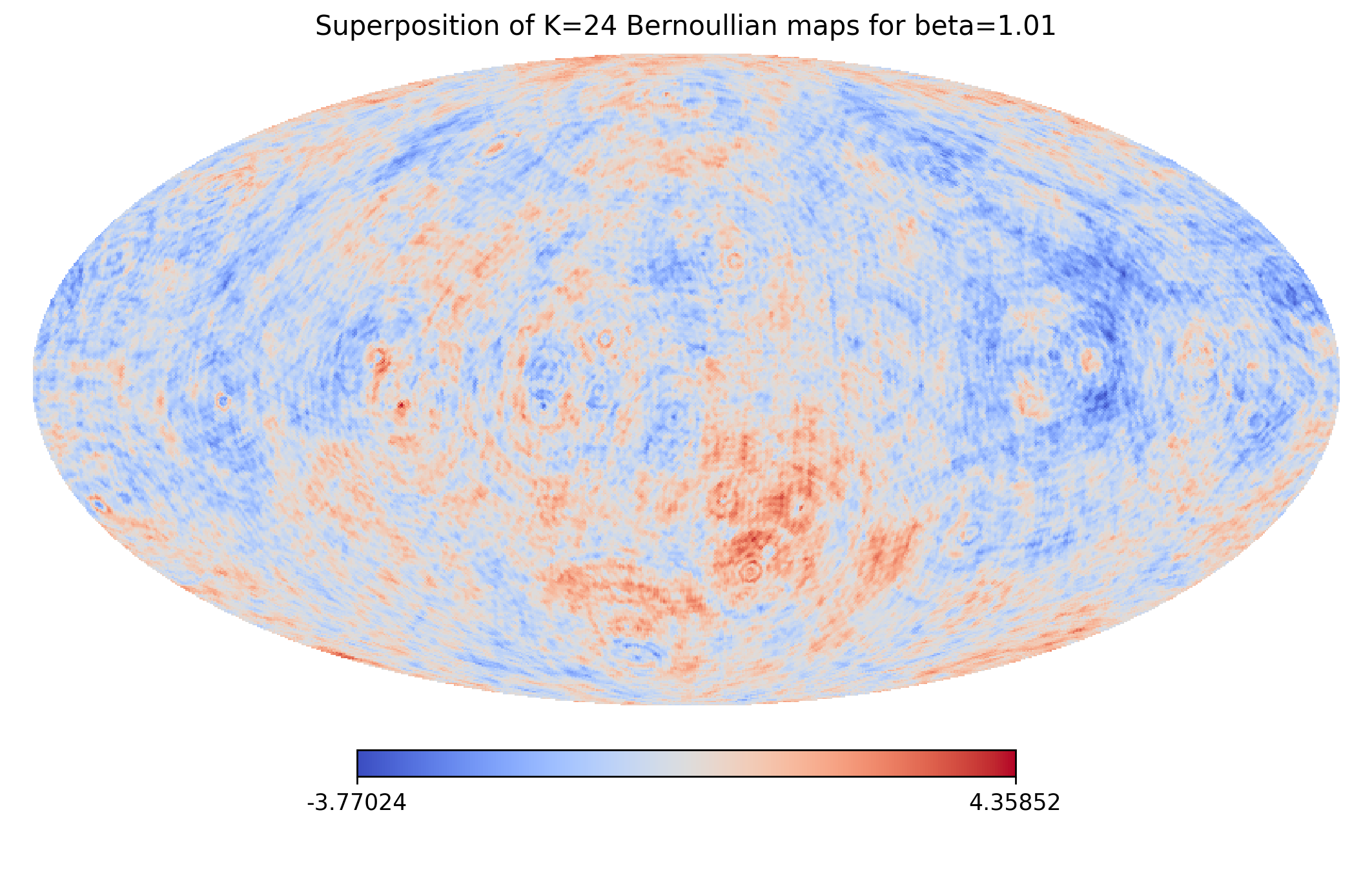}
    \end{subfigure}

    % --- riga 3 --- 
     \begin{subfigure}{0.45\textwidth}
       \includegraphics[width=\linewidth]{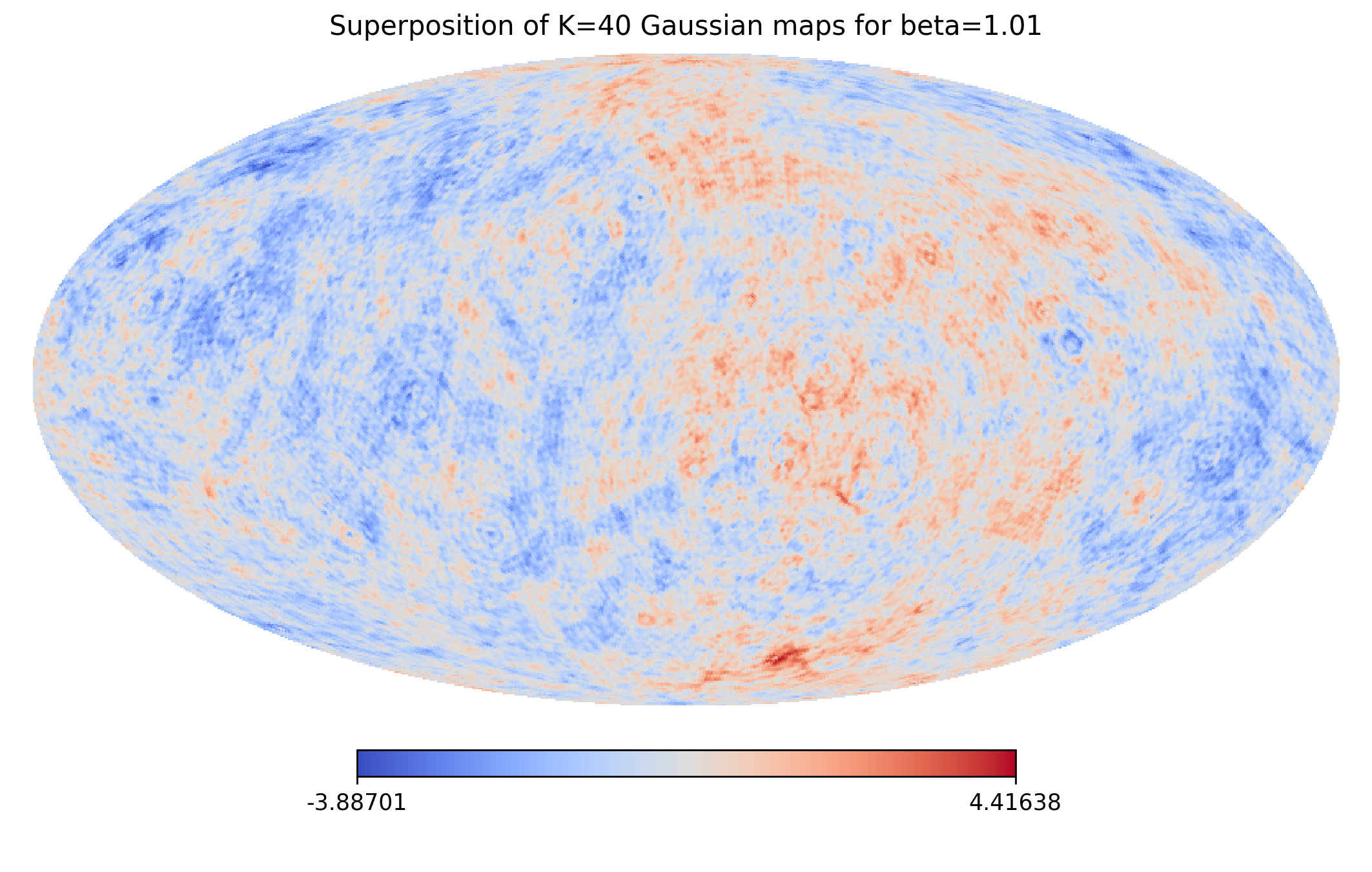}
    \end{subfigure}
    \hfill
    \begin{subfigure}{0.45\textwidth}
        \includegraphics[width=\linewidth]{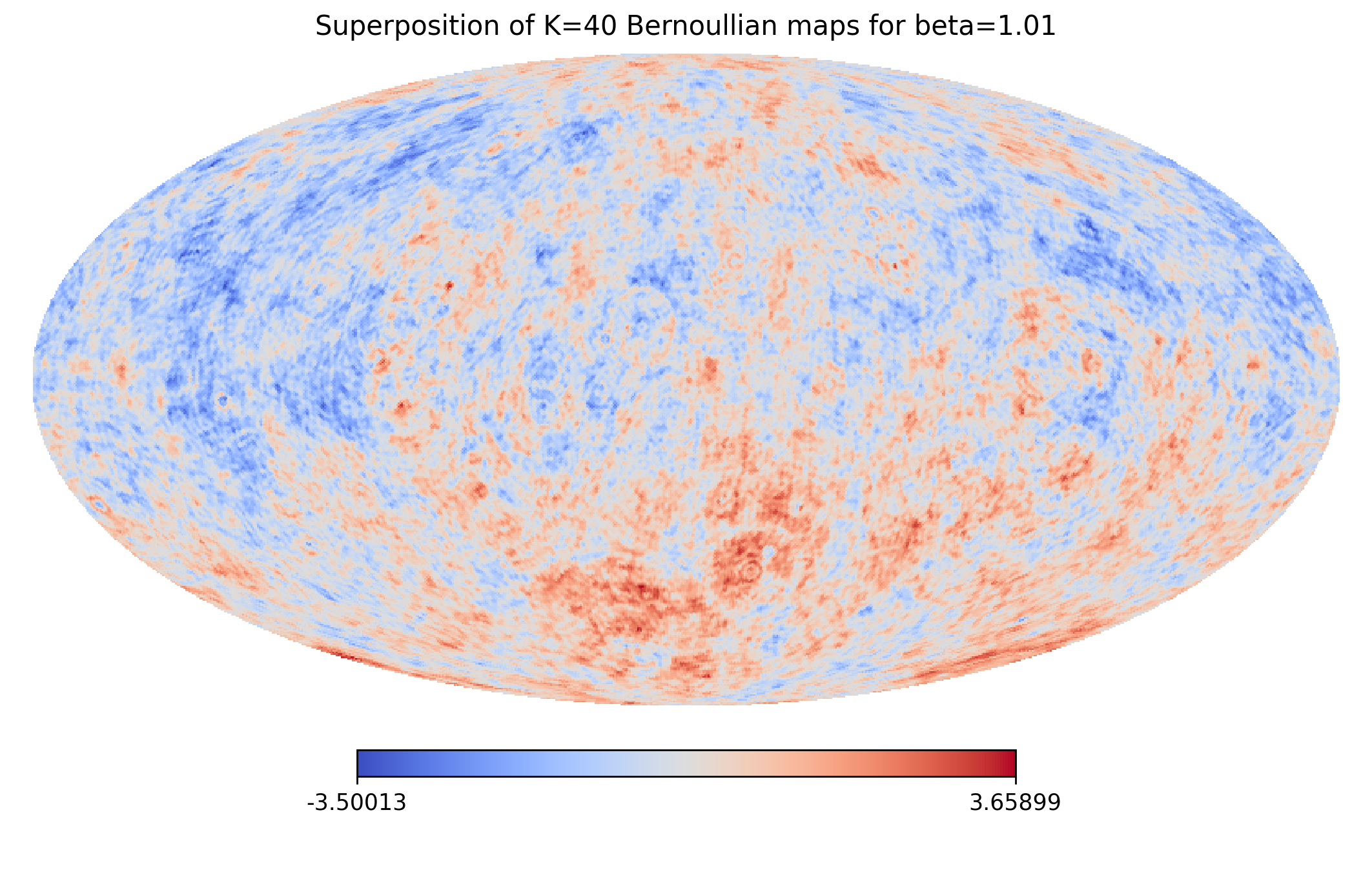}
    \end{subfigure}

     % --- riga 4 --- 
     \begin{subfigure}{0.7\textwidth}
       \includegraphics[width=\linewidth]{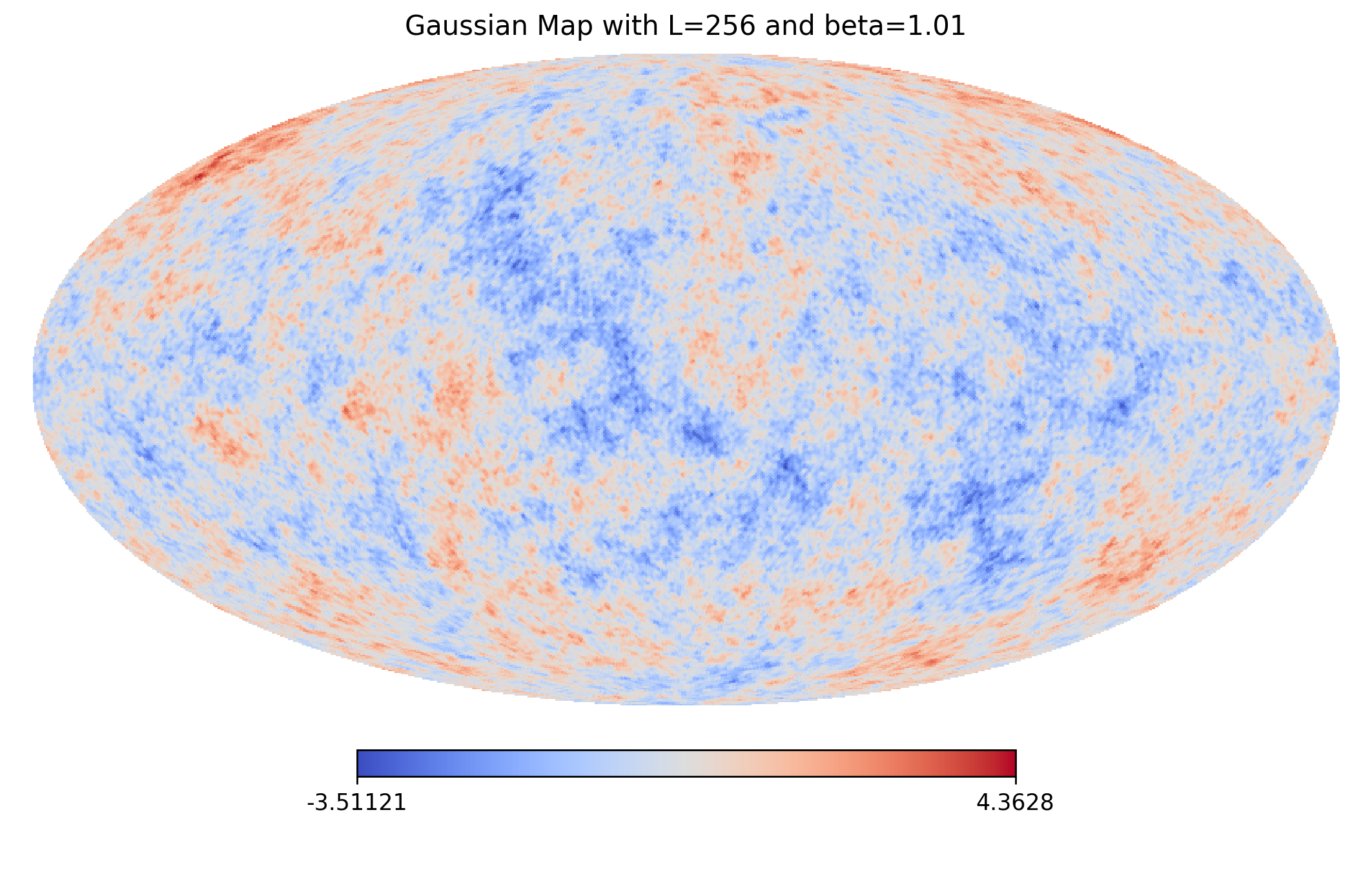}
    \end{subfigure}
    \hfill
    \caption{Simulation of a Whittle-Matérn field with $\beta=1.01$ (bottom image) with resolution $L_{\max}=256$ and $n_{\mathrm{side}}=128$. In the first three rows, we have plotted the sparse random fields written as superposition of $K=4,\,24,\,40$ random waves. On the left column, the random weights $\{\eta_{\ell k}\}$ are taken to be Gaussian random variables with variance $4\pi C_\ell/K$. On the right column, the random weights $\{\eta_{\ell k}\}$ are centered Bernoulli random variables normalized with $4\pi C_\ell/K$.}
    \label{fig:256_101}
\end{figure}

\begin{figure}[h!]
    \centering
    % --- riga 1 ---
    \begin{subfigure}{0.45\textwidth}
        \includegraphics[width=\linewidth]{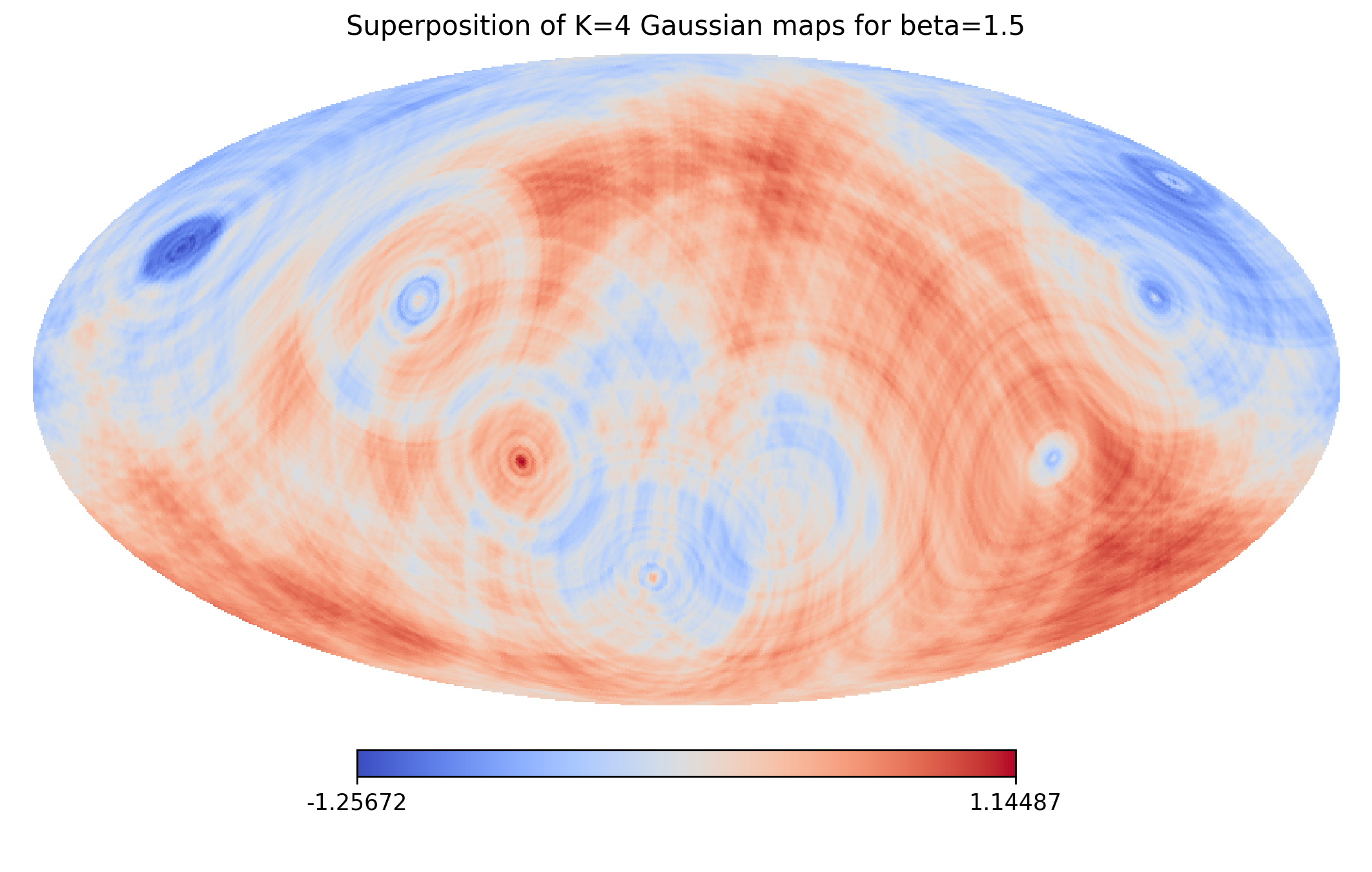}
    \end{subfigure}
        \hfill
    \begin{subfigure}{0.45\textwidth}
        \includegraphics[width=\linewidth]{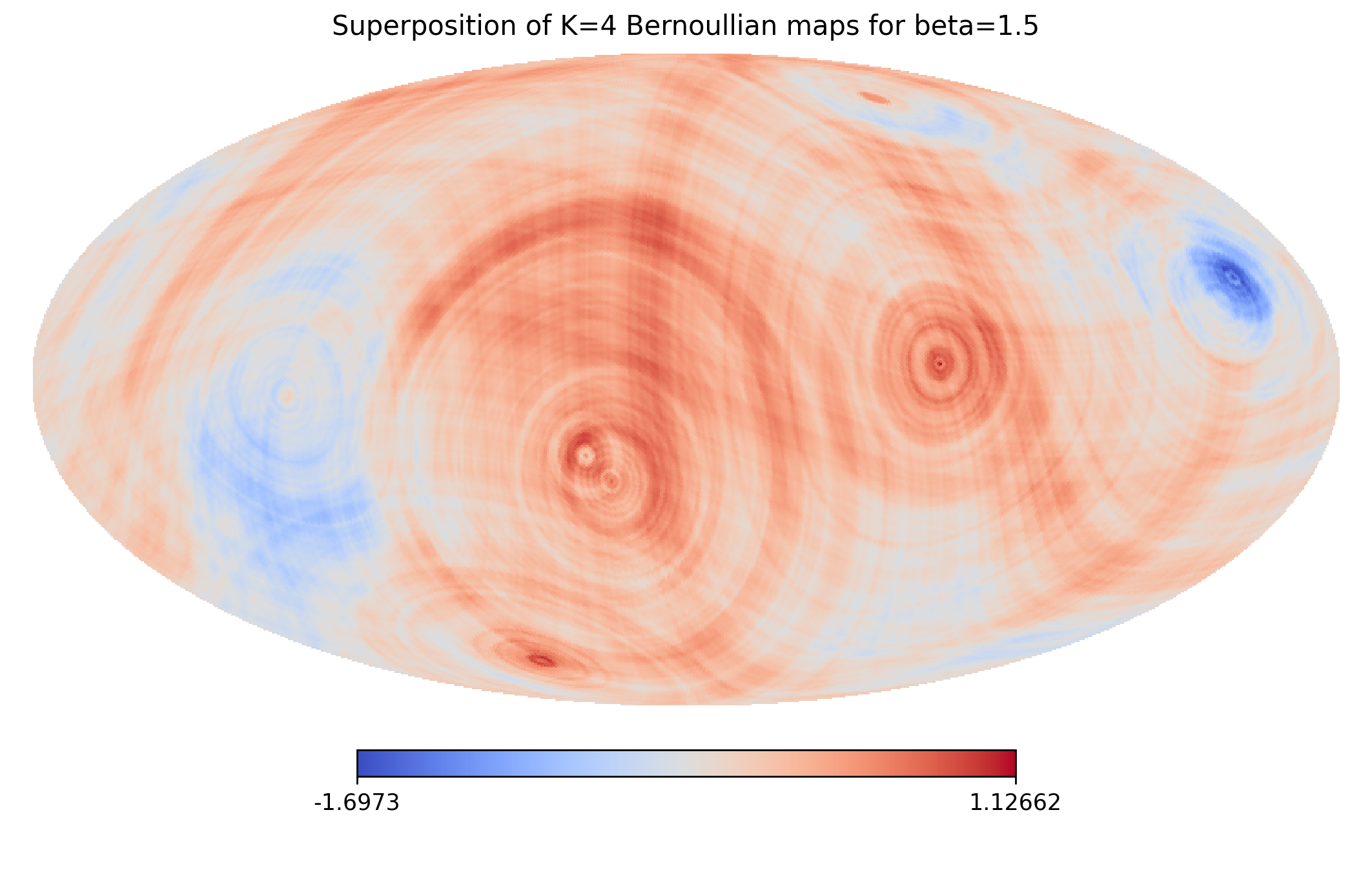}
    \end{subfigure}
        
    % --- riga 2 --- 
     \begin{subfigure}{0.45\textwidth}
        \includegraphics[width=\linewidth]{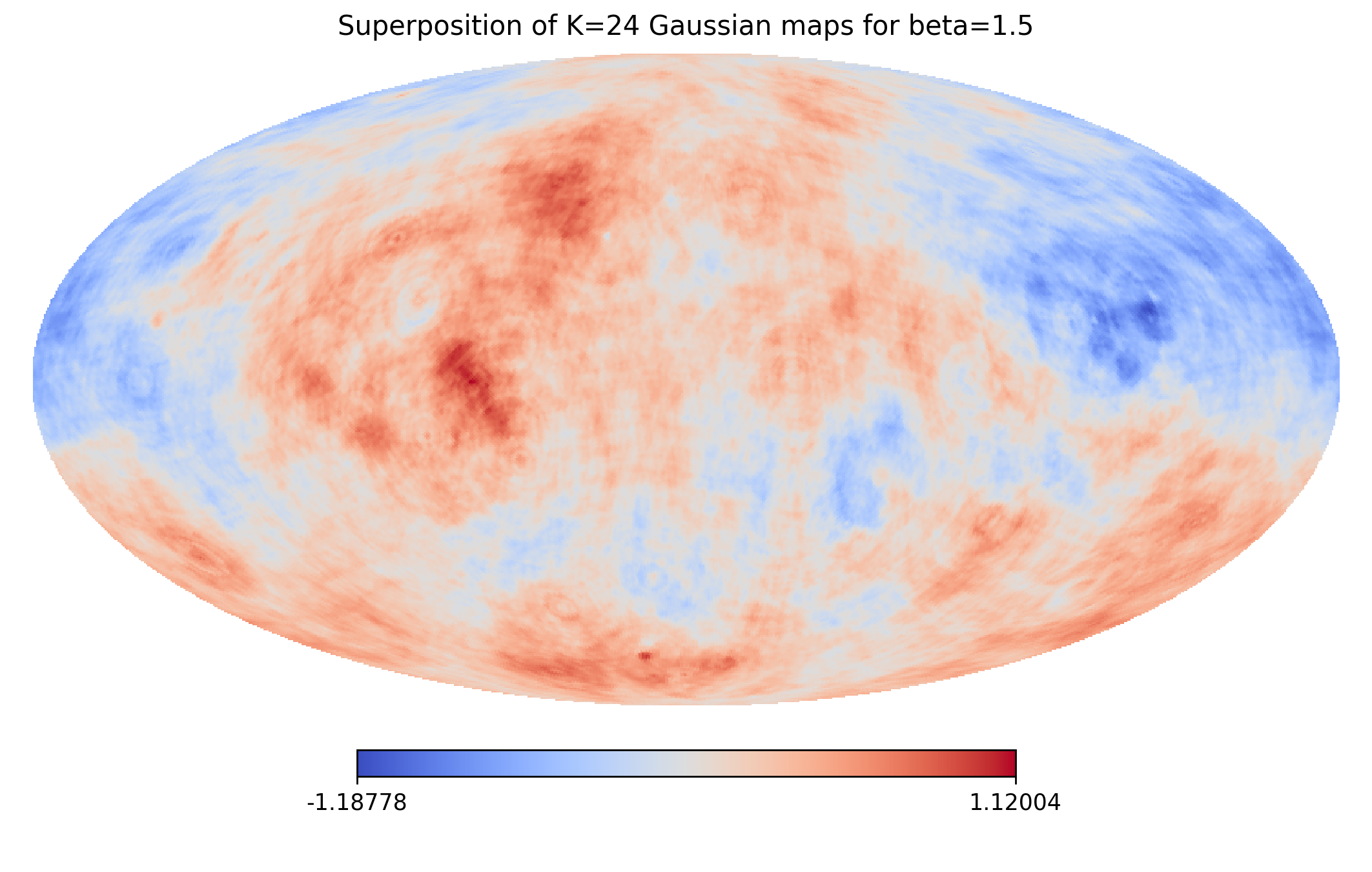}
    \end{subfigure}
    \hfill
    \begin{subfigure}{0.45\textwidth}
        \includegraphics[width=\linewidth]{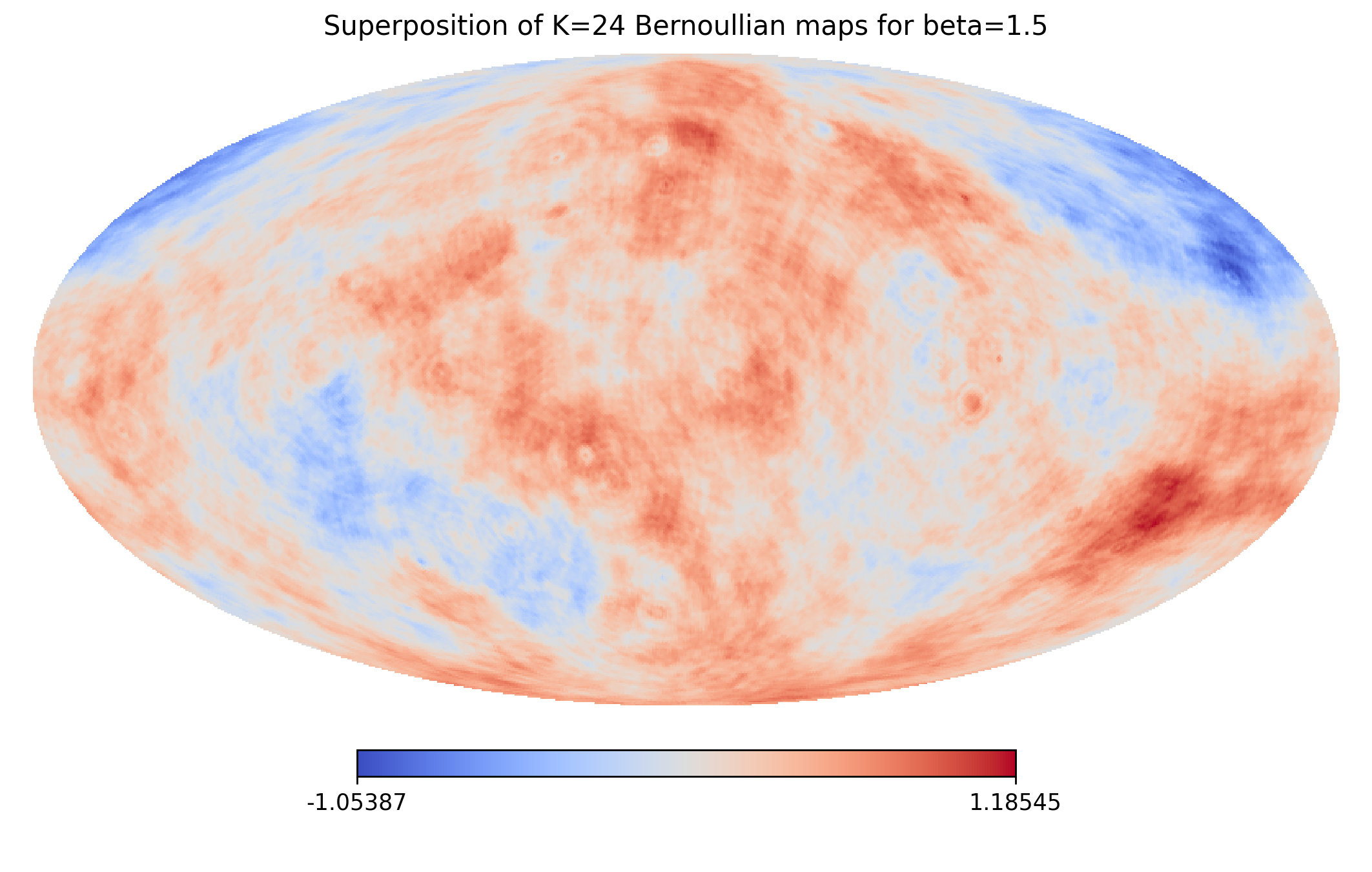}
    \end{subfigure}

    % --- riga 3 --- 
     \begin{subfigure}{0.45\textwidth}
       \includegraphics[width=\linewidth]{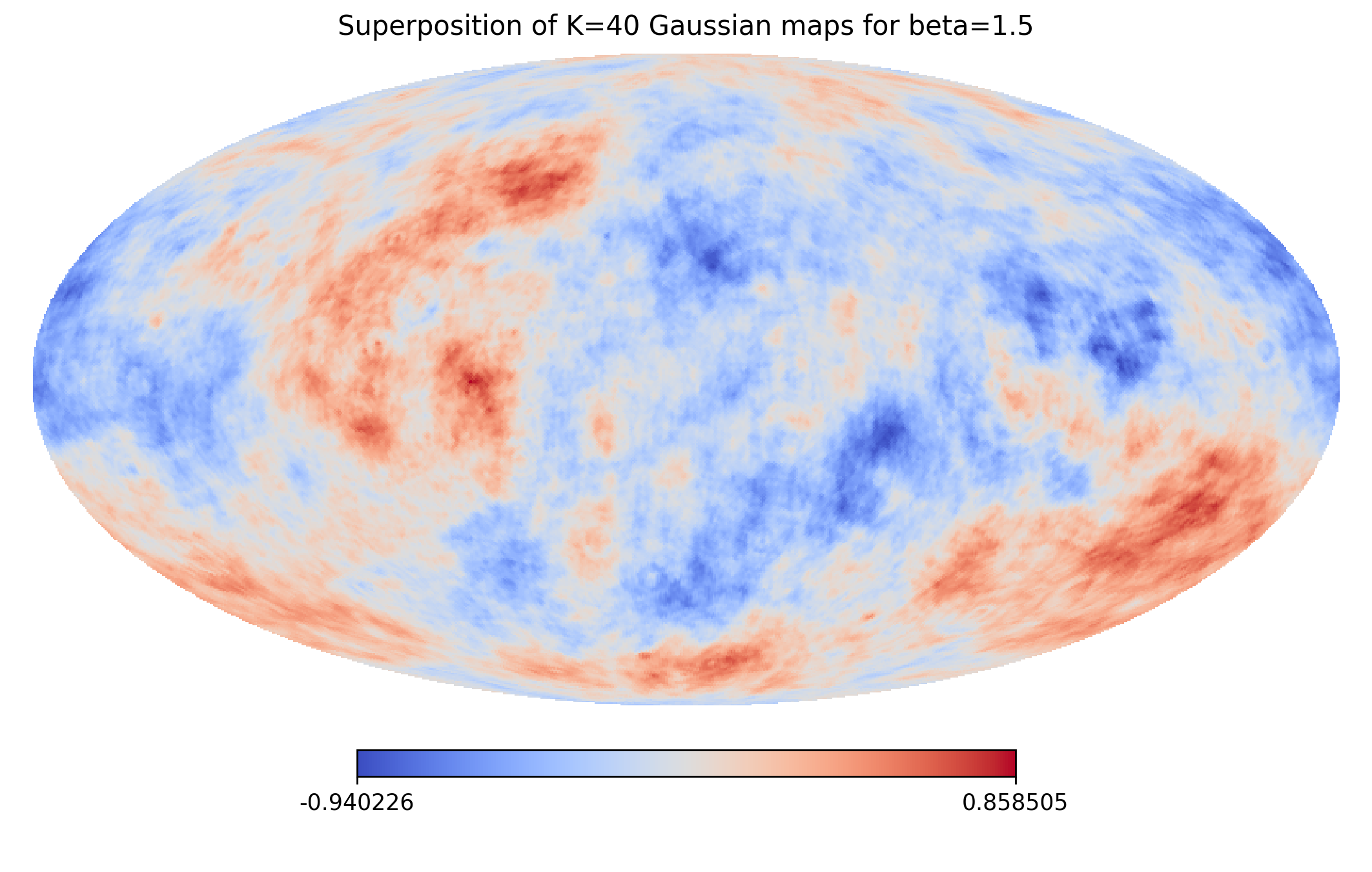}
    \end{subfigure}
    \hfill
    \begin{subfigure}{0.45\textwidth}
        \includegraphics[width=\linewidth]{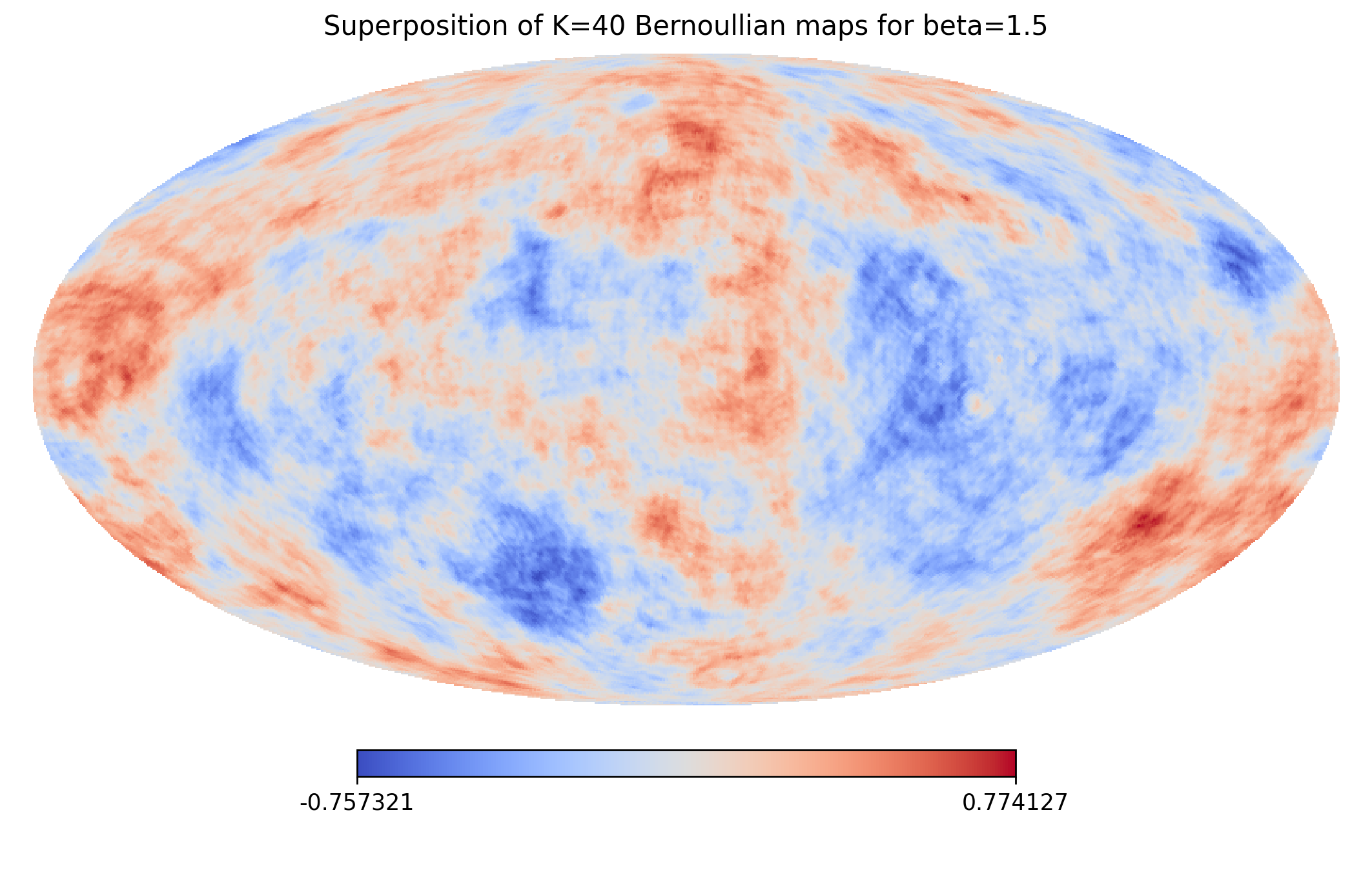}
    \end{subfigure}

     % --- riga 4 --- 
     \begin{subfigure}{0.7\textwidth}
       \includegraphics[width=\linewidth]{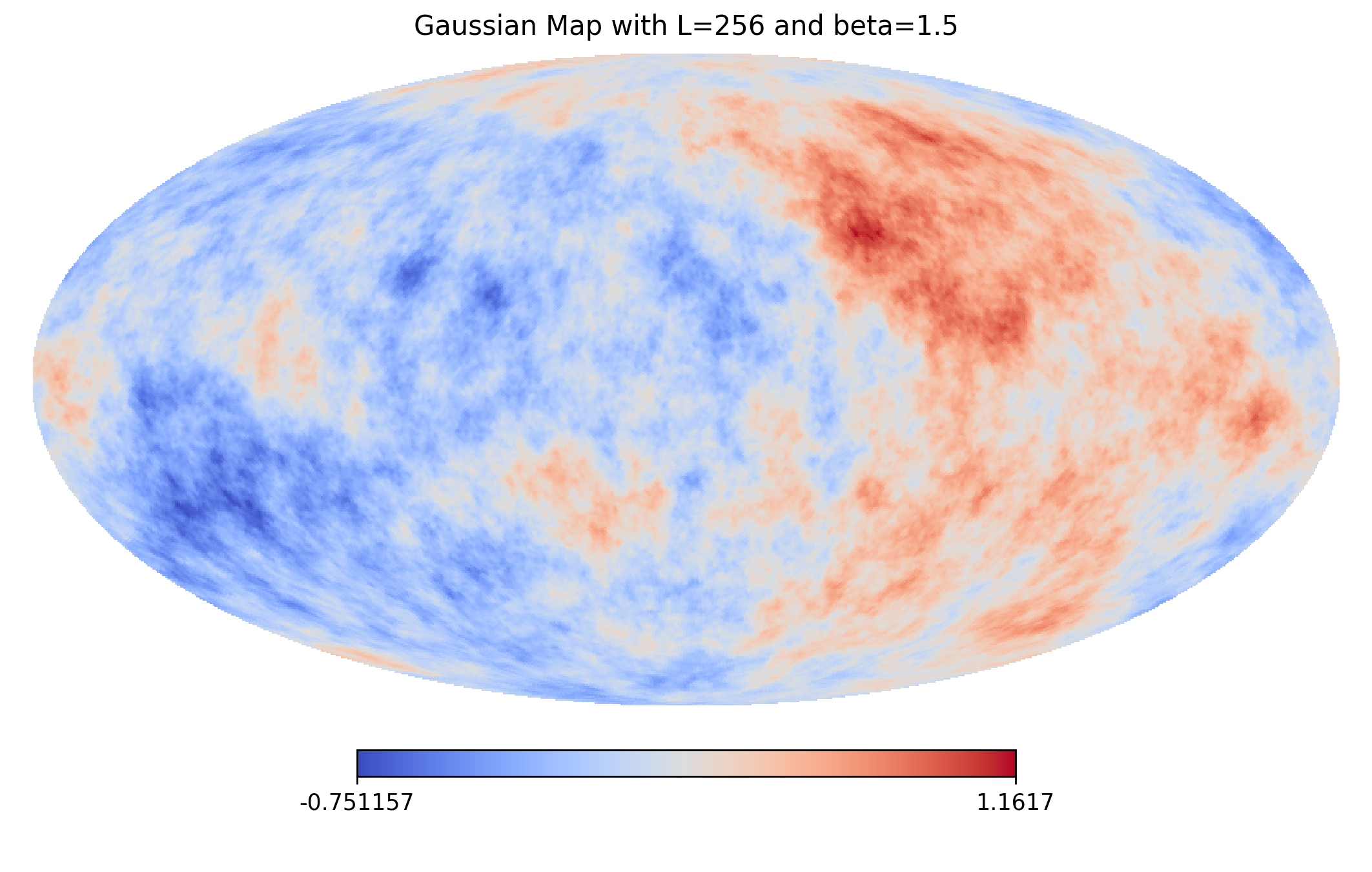}
    \end{subfigure}
    \hfill

     \caption{Simulation of a Whittle-Matérn field with $\beta=1.5$ (bottom image) with resolution $L_{\max}=256$ and $n_{\mathrm{side}}=128$. In the first three rows, we have plotted the sparse random fields written as superposition of $K=4,\,24,\,40$ random waves. On the left column, the random weights $\{\eta_{\ell k}\}$ are taken to be Gaussian random variables with variance $4\pi C_\ell/K$. On the right column, the random weights $\{\eta_{\ell k}\}$ are centered Bernoulli random variables normalized with $4\pi C_\ell/K$.}
    \label{fig:256_150}
\end{figure}

\newpage

\appendix

\section{On the Wick Product}\label{App:Wick}

In this appendix, we recall the definition and main properties of Wick's product for Gaussian random variables. Let $(\Omega,\mathcal{F},\mathbb{P})$ be a probability space and let
$Z=(Z_1,\dots,Z_d)$ be a centered Gaussian vector with covariance matrix
$\Sigma$. Wick products (or normal ordered products) provide a
renormalized notion of multiplication for real-valued Gaussian random variables,
which plays a fundamental role in Gaussian analysis and Wiener chaos
theory.

\medskip

\noindent\textbf{Main properties.}
Wick products satisfy the following fundamental properties:
\begin{itemize}
\item[(i)] \emph{Centering:} $\mathbb{E}[\colon Z_{i_1}\cdots Z_{i_k}\colon]=0$
for all $k\geq1$.
\item[(ii)] \emph{Orthogonality:} Wick monomials of different total
degrees are orthogonal in $\mrmL^2(\Omega)$.
\item[(iii)] \emph{Product formula:} for Gaussian variables $X,Y$,
\[
XY = \,\colon XY\colon \,+ \mathbb{E}[XY],
\]
and more generally any product of Gaussian variables can be expanded as
a finite sum of Wick products corresponding to all possible contractions.
\item[(iv)] \emph{Stability under limits:} Wick products extend by
continuity to limits of polynomial functionals in $\mrmL^2(\Omega)$.
\item[(v)] \emph{Connection to Hermite polynomials: } If $Z=(Z_1,\dots,Z_d)$ is a vector of independent standard Gaussian random
variables, then for any multi-index $\underline{\alpha}=(\alpha_1,\dots,\alpha_d)\in\mathbb{N}^d$ the Wick monomial $\colon Z^{\underline{\alpha}}\colon \;=\; \colon Z_1^{\alpha_1}\cdots Z_d^{\alpha_d}\colon$ 
coincides with the product of univariate Hermite
polynomials, namely
\begin{equation*}
\colon Z^{\underline{\alpha}}\colon = \prod_{j=1}^d H_{\alpha_j}(Z_j),
\end{equation*}
where $H_k$ denotes the $k^{th}$ Hermite polynomial.
\end{itemize}

\medskip

\noindent
Wick products therefore provide a canonical orthogonalization of
polynomial Gaussian functionals and are naturally identified with
elements of Wiener chaos, a viewpoint that is central in limit theorems
and diagram formulae. 

For instance, if
$Z$ is a centered Gaussian random variable with unit-variance, then for any integers $p,q\geq0$, the product of Wick
powers admits the expansion
\[
:Z^p:\,:Z^q:
=\sum_{r=0}^{p\wedge q}
r!\binom{p}{r}\binom{q}{r}
:Z^{p+q-2r}:,
\]
which follows from the multiplication formula for Hermite polynomials (see \cite{NourdinPeccati}).
In particular, Wick products are not multiplicative when sharing common
Gaussian factors, and all possible contractions explicitly contribute.

More generally, for jointly Gaussian random variables
$Z_1,\dots,Z_d$, the product of Wick monomials decomposes as a finite sum
of Wick products obtained by pairing common variables according to their
covariance structure, a representation that underlies the classical
diagram formula.
\medskip
\begin{example}
For instance, if $Z_1,Z_2,Z_3$ are jointly standard Gaussian, one obtains
\[
:Z_1 Z_2:\,:Z_2 Z_3:\,
= Z_1\,:Z_2^2:\,Z_3=H_1(Z_1)\,H_2(Z_2)\,H_1(Z_3)=Z_1(Z_2^2-1)Z_3=Z_1 Z_2^2 Z_3
+ Z_1 Z_3\,,
\]
which illustrates explicitly how common variables generate lower-order
Wick products (hence Hermite polynomials) through Gaussian contractions. 

Similarly, for $Z_1,Z_2$ jointly standard Gaussian one gets
\[
\colon Z_1Z_2\colon\, \colon Z_1Z_2^2\colon=\colon Z_1^2 Z_2^3\colon
= H_2(Z_1)\,H_3(Z_2)
= (Z_1^2-1)(Z_2^3-3Z_2)\,.
\]
\end{example}

%%%%%%%%%%%% BIBLIOGRAPHY %%%%%%%%%%%

\bibliographystyle{alpha}
\bibliography{main}

\newcommand{\etalchar}[1]{$^{#1}$}
\begin{thebibliography}{LGSWW20}

\bibitem[BDMT24]{Bourguin2024}
S.~Bourguin, C.~Durastanti, D.~Marinucci, and A.P. Todino.
\newblock Spherical {P}oisson waves.
\newblock {\em Electron. J. Probab.}, 29:Paper No. 8, 27, 2024.

\bibitem[BKMR04]{BARTOLO2004}
N.~Bartolo, E.~Komatsu, S.~Matarrese, and A.~Riotto.
\newblock Non-gaussianity from inflation: theory and observations.
\newblock {\em Physics Reports}, 402(3):103--266, 2004.

\bibitem[BvdG11]{Vandegeer2011}
P.~B\"uhlmann and S.~van~de Geer.
\newblock {\em Statistics for high-dimensional data}.
\newblock Springer Series in Statistics. Springer, Heidelberg, 2011.
\newblock Methods, theory and applications.

\bibitem[CD25]{castaldo2025}
M.~Castaldo and C.~Durastanti.
\newblock Spherical poisson needlets with shrinking bandwidth, 2025.

\bibitem[CHSS06]{Copi2006}
C.J. {Copi}, D.~{Huterer}, D.J. {Schwarz}, and G.D. {Starkman}.
\newblock {On the large-angle anomalies of the microwave sky}.
\newblock {\em Monthly Notices of the Royal Astronomical Society},
  367(1):79--102, March 2006.

\bibitem[CM15]{CammarotaAcha2015}
V.~Cammarota and D.~Marinucci.
\newblock The stochastic properties of {$\ell^1$}-regularized spherical
  {G}aussian fields.
\newblock {\em Appl. Comput. Harmon. Anal.}, 38(2):262--283, 2015.

\bibitem[Col20]{PlanckNG2020}
Planck Collaboration.
\newblock Planck 2018 results. {IX}. {C}onstraints on primordial
  non-{G}aussianity.
\newblock {\em Astronomy and Astrophysics (A"I\&"A)}, 641(A9):1--47, 2020.

\bibitem[{Den}05]{Dennis2005JPhA}
M.~R. {Dennis}.
\newblock {Correlations between Maxwell's multipoles for Gaussian random
  functions on the sphere}.
\newblock {\em Journal of Physics A Mathematical General}, 38(8):1653--1658,
  February 2005.

\bibitem[Dur25]{durastanti2025}
C.~Durastanti.
\newblock Gaussian approximation for non-linearity parameter estimation in
  perturbed random fields on the sphere, 2025.

\bibitem[FMM{\etalchar{+}}14]{Feeney}
S.~Feeney, D.~Marinucci, J.~McEwen, H.~Peiris, B.~Wandelt, and V.~Cammarota.
\newblock {Sparse Inpainting and Isotropy}.
\newblock {\em Appl. Comput. Harmon. Anal.}, 050, 2014.

\bibitem[GHB{\etalchar{+}}05]{2005ApJ...622..759G}
K.~M. {G{\'o}rski}, E.~{Hivon}, A.~J. {Banday}, B.~D. {Wandelt}, F.~K.
  {Hansen}, M.~{Reinecke}, and M.~{Bartelmann}.
\newblock {HEALPix: A Framework for High-Resolution Discretization and Fast
  Analysis of Data Distributed on the Sphere}.
\newblock {\em apj}, 622:759--771, April 2005.

\bibitem[Leo99]{Leonenko1999}
N.~Leonenko.
\newblock {\em Limit theorems for random fields with singular spectrum}, volume
  465 of {\em Mathematics and its Applications}.
\newblock Kluwer Academic Publishers, Dordrecht, 1999.

\bibitem[LGSWW20]{Sloan2020}
Q.T. Le~Gia, I.H. Sloan, R.S. Womersley, and Y.G. Wang.
\newblock Isotropic sparse regularization for spherical harmonic
  representations of random fields on the sphere.
\newblock {\em Appl. Comput. Harmon. Anal.}, 49(1):257--278, 2020.

\bibitem[Mit20]{Mityagin2020}
B.~S. Mityagin.
\newblock The zero set of a real analytic function.
\newblock {\em Mathematical Notes}, 107(3):529--530, 2020.

\bibitem[MP11]{marinucci2011random}
D.~Marinucci and G.~Peccati.
\newblock {\em {Random Fields on the Sphere: Representation, Limit Theorems and
  Cosmological Applications}}, volume 389 of {\em London Mathematical Society
  Lecture Note Series}.
\newblock Cambridge University Press, 2011.

\bibitem[Mü66]{Muller1966SphericalHarmonics}
C.~Müller.
\newblock {\em Spherical Harmonics}, volume~17 of {\em Lecture Notes in
  Mathematics}.
\newblock Springer-Verlag, Berlin / Heidelberg, 1966.

\bibitem[NP12]{NourdinPeccati}
I.~Nourdin and G.~Peccati.
\newblock {\em Normal approximations with {M}alliavin calculus}, volume 192 of
  {\em Cambridge Tracts in Mathematics}.
\newblock Cambridge University Press, Cambridge, 2012.
\newblock From Stein's method to universality.

\bibitem[OPQ20]{Oliveira2020}
R.A. {Oliveira}, T.S. {Pereira}, and M.~{Quartin}.
\newblock {CMB statistical isotropy confirmation at all scales using multipole
  vectors}.
\newblock {\em Physics of the Dark Universe}, 30:100608, December 2020.

\bibitem[SFR13]{Starck2013}
J.-L. {Starck}, M.~J. {Fadili}, and A.~{Rassat}.
\newblock {Low-{\ensuremath{\ell}} CMB analysis and inpainting}.
\newblock {\em Astronomy and Astrophysics}, 550:A15, feb 2013.

\bibitem[SFS23]{KSW2023}
W.~{Sohn}, J.~R. {Fergusson}, and E.~P.~S. {Shellard}.
\newblock {High-resolution CMB bispectrum estimator with flexible modal bases}.
\newblock {\em Phys. Rev. D}, 108(6):063504, September 2023.

\bibitem[VBDC25]{NG2025}
E.~{Vislosky}, Z.~{Brown}, R.~{Demina}, and E.~{Chaussidon}.
\newblock {Beyond the two-point correlation: Constraining primordial
  non-Gaussianity with density-perturbation moments}.
\newblock {\em Astronomy and Astrophysics (A"I\&"A)}, 704:A297, dec 2025.

\bibitem[VMK88]{Varshalovich1988}
A.~Varshalovich, D.\, N.~Moskalev, A.\, and K.~Khersonskii, V.\.
\newblock {\em Quantum Theory of Angular Momentum: Irreducible Tensors,
  Spherical Harmonics, Vector Coupling Coefficients, 3nj Symbols}.
\newblock World Scientific Publishing Company, Singapore / Teaneck, NJ, 1988.

\bibitem[Yad83]{Yadrenko1983}
M.I. Yadrenko.
\newblock {\em Spectral theory of random fields}.
\newblock Translation Series in Mathematics and Engineering. Optimization
  Software, Inc., Publications Division, New York, 1983.
\newblock Translated from the Russian.

\bibitem[ZSL{\etalchar{+}}19]{Zonca2019}
A.~Zonca, L.~Singer, D.~Lenz, M.~Reinecke, C.~Rosset, E.~Hivon, and K.~Gorski.
\newblock healpy: equal area pixelization and spherical harmonics transforms
  for data on the sphere in python.
\newblock {\em Journal of Open Source Software}, 4(35):1298, March 2019.

\end{thebibliography}

\end{document}